\theoremstyle{plain}
\newtheorem{theorem}{Theorem}[section]
\newtheorem{corollary}[theorem]{Corollary}
\newtheorem{conjecture}[theorem]{Conjecture}
\newtheorem{lemma}[theorem]{Lemma}
\title{On permutation patterns with constrained gap sizes}
\author{Stoyan Dimitrov}
\address{University of Illinois at Chicago}
\email{sdimit6@uic.edu}
\date{}
\begin{document}

\maketitle
\begin{abstract}
    We consider avoidance of permutation patterns with designated gap sizes between pairs of consecutive letters. We call the patterns having such constraints \emph{distant patterns (DPs)} and we show their relation to other pattern notions investigated in the past. New results on DPs with $2$ and $3$ letters are obtained. Furthermore, we show how one can use DPs to prove two former conjectures of Kuszmaul without a computer. In addition, we deduce a surprising relation between the sets of permutations avoiding the classical patterns $123$ and $132$ by looking at a class of DPs with tight gap constraints. Some interesting analogues of the Stanley--Wilf former conjecture for DPs are also discussed.
\end{abstract}
\section{Introduction}
The notion of patterns in permutations has applications when solving a variety of enumeration problems in different areas including algorithms (sortable permutations), algebraic geometry (Kazhdan-Lusztig polynomials and Shubert varieties), statistical mechanics (generalizations of gas models), computational biology and even chemistry.

The work of Simion and Schmidt \cite{simion} was the first systematic study of permutation pattern avoidance. Before that, in 1968, Knuth \cite{knuth} showed that the permutation $\pi$ can be sorted by a stack if and only if $\pi$ avoids $231$, and that the stack-sortable permutations are enumerated by the Catalan numbers.

This work is related to the idea of considering permutation patterns for which we have additional constraints for the distance between some pairs of consecutive letters in the pattern's occurrences. For example, when we want two consecutive letters to have a positive gap size in any occurrence of a pattern, we will write $\square$ between these two letters. For instance, a permutation $\pi = \pi_{1}\pi_{2} \cdots \pi_{n}$ avoids $1 \square 2$ if there is no $1\leq i<j\leq n$ with $j\geq i+2$, such that $\pi_{i}<\pi_{j}$. In this paper, we obtain some interesting results related to this type of patterns, and show its usefulness when interpreting other results combinatorially.

\subsection{Definitions}
Permutations in this paper are presented in one-line notation. When we say an $n$-permutation or a permutation of size $n$, we will mean a bijective map from $[n] = \{1,2, \cdots , n\}$ to itself. A sequence of distinct numbers will be just called \emph{sequence}. \emph{An occurrence} of the classical pattern $p$ in a permutation $\pi$ is a subsequence in $\pi$ whose letters are in the same relative order as those in $p$. We use the $\textsf{fl}$ operator to define this formally. Given a sequence of distinct real numbers $u_{1}, u_{2}, \cdots u_{k}$, define $\textsf{fl}(u_{1}, u_{2}, \cdots u_{k})$ to be the permutation $q\in S_{k}$ such that $u_{i}<u_{j}$ if and only if $q_{i}<q_{j}$. A permutation is uniquely defined by the set of its inversions, so this condition uniquely defines $q$. A permutation $\pi\in S_n$ contains a pattern $p = p_{1}p_{2} \cdots p_{k}\in S_{k}$ for $k \leq n$ if there exists $i_{1} < i_{2} < \cdots < i_{k}$ such that $\textsf{fl}(\pi_{i_{1}}\pi_{i_{2}}\cdots \pi_{i_{k}}) = p$. Otherwise, $\pi$
avoids $p$. For instance, the permutation $32514$ has two occurrences of the pattern $231$, namely the subsequences $351$ and $251$ and it avoids the pattern $123$.

\emph{Vincular patterns} allow the requirement that some adjacent letters in a pattern must also be adjacent in the permutation.
We indicate this requirement by underlining the letters that must be adjacent. For instance, if the pattern 3\underline{12} occurs in a permutation $\pi$, then the letters in $\pi$ that correspond to $1$ and $2$ are adjacent. For example, the permutation $621543$ has
only one occurrence of the pattern $3\underline{12}$, namely the subsequence $615$. Vincular patterns were first studied systematically by Babson and Steingr\'{ı}msson in \cite{babson}, under the name “generalized patterns”. 
% where many Mahonian permutation
% statistics were shown to be linear combinations of vincular patterns. 
When all the entries of a pattern are required to occur in adjacent positions, we will call the pattern \emph{consecutive}.

We will denote by $S_{n}$ the set of permutations of size $n$ and by $\operatorname{Av}_{n}(p)$ the set of permutations of size $n$ avoiding a pattern $p$. The set of all permutations, of any size, avoiding a pattern $p$ will be denoted $\operatorname{Av}(p)$. Similarly, the set of permutations that avoid all permutations in a set $\Pi$ will be denoted $\operatorname{Av}_{n}(\Pi)$ and $\operatorname{Av}(\Pi)$, respectively. If each number in a sequence $\sigma_1$ is greater than each number in a sequence $\sigma_2$, then we write $\sigma_1 > \sigma_2$. Otherwise, we write $\sigma_1 \ngtr \sigma_2$. Finally, we write $\sigma[<x]$ (resp. $\sigma[>x]$) for the subsequence of elements of $\sigma$ that are less than (resp., greater than) $x$.
% For a permutation $\sigma$ of size $m$ and $m$ non-empty permutations $\alpha_{1},\alpha_{2},\cdots \alpha_{m}$, the \emph{inflation} of $\sigma$ by $\alpha_{1},\alpha_{2},\cdots \alpha_{m}$ is the permutation $\pi[\alpha_{1},\alpha_{2},\cdots \alpha_{m}]$ given by replacing each entry $\sigma(i)$ by
% the values $[1+\sum\limits_{j=1}^{i-1}|\alpha_{j}|, \sum\limits_{j=1}^{i}|\alpha_{j}|]$ arranged to be order-isomorphic to $\alpha_{i}$. For example
% \begin{center}
% $2431[21,1,312,12] = 43875612$    
% \end{center}
% We will use special notations for the inflations of permutations with 2 elements, following \cite{beanLects}: $\alpha \oplus \beta \coloneqq 12[\alpha, \beta]$ and $\alpha \ominus \beta \coloneqq 21[\alpha, \beta]$.

% We will write $Im(\sigma)$ for the set of the elements of a sequence $\sigma$, with $Im(\sigma_{1},\sigma_{2},\cdots ,\sigma_{l}) \coloneqq Im(\sigma_{1})\cup Im(\sigma_{2})\cup\cdots \cup Im(\sigma_{l})$.

A permutation class is a set $C$ of permutations such that every pattern occurring in a permutation in $C$ is also in $C$. Certainly, $\operatorname{Av}(\Pi)$ is a permutation class for any set of permutations $\Pi$. Two permutations $\sigma , \tau\in S_{k}$  are \emph{Wilf-equivalent} if, for each $n$, $|\operatorname{Av}_{n}(\sigma)|$ equals $|\operatorname{Av}_{n}(\tau)|$. Whenever we say generating function, we will mean ordinary generating function.

We will write $\square^{r}$ to denote a gap with at least $r$ letters, with $\square \coloneqq \square^{1}$. Therefore avoiding the pattern $12 \square^{2} 3$ will be the same as avoiding occurrences $xyz$ of the classical pattern $123$ for which there are at least two other letters between $y$ and $z$. The patterns containing $\square^{r}$ symbols will be called \emph{distant patterns} (\emph{DPs}) and we will use  \emph{gap} and  \emph{gap size} for the space between two consecutive letters of a pattern and for its size. Any distant pattern can be written in the form
\begin{equation*}
    \square^{r_{0}}q_{1}\square^{r_{1}}q_{2}\square^{r_{2}} \cdots \square^{r_{k-1}}q_{k}\square^{r_{k}},
\end{equation*} where each $r_{i}$ is a non-negative integer and $q_{1}q_{2}\cdots q_{k}\in S_{k}$. We will also consider tight constraints and we will underline the corresponding part of the pattern in case of a tight constraint as, for example, in $\underline{1\square^{4} 2}3$ to denote that we want to avoid the pattern $123$ with gap size exactly $4$ between the letters $1$ and $2$. DPs without any tight constraints are \emph{classical distant patterns} while DPs having at least one tight constraint are \emph{vincular distant patterns}. If we take a classical pattern $q$ and require the minimal gap size to be the same number $r$ for all pairs of consecutive letters, we will write $\operatorname{dist}_{r}(q)$ and we will call these \emph{uniform distant patterns}. For example, $\operatorname{dist}_{3}(312)=3\square^{3}1\square^{3}2$. Note that DPs generalize classical patterns since $q = \operatorname{dist}_{0}(q)$ for any classical pattern $q$. DPs generalize vincular patterns, as well, since one can write any vincular pattern as a vincular distant pattern. Finally, when we say that a distant pattern has size $n$, we mean that the number of its non-square letters is $n$. For example $2\square 1\square^{2} 3$ is a distant pattern of size $3$.

\subsection{Related work and motivation}
The idea of arbitrary constraints for the gap sizes between any two consecutive pattern letters is not new, even though not much has been written on the subject and the topic seems to be not so well explored yet. The thesis of Ghassan Firro \cite{thesis} defines a more general concept of permutation patterns with gap constraints unifying many popular pattern notations. He calls them \emph{distanced patterns} or \emph{d-patterns} and uses a different notation. The distanced patterns described there also allow requiring a gap size to be at most some given number $r$. The thesis itself enumerates the patterns of the kind $xy \square z$ using both a direct bijection and an analytical approach. We have included this result in Section \ref{sec:polygons}. The paper of Hopkins and Weiler \cite{hopkins} describes the concept of  uniform distant patterns  under the name of \emph{gap patterns} and obtains an important result related to them, as a corollary of their work on pattern avoidance over posets. We state this corollary in Section \ref{sec:len3}.

In his book dedicated to pattern avoidance \cite{Kit11}, Kitaev mentions patterns containing the $\square$-symbol, where the work of Hou and Mansour on the so-called Horse permutations \cite{horsePerm} is listed. There, the authors proved that the permutations avoiding both the classical pattern $132$ and the pattern $1\square\underline{23}$ are in one to one correspondence with the so-called Horse paths.

In \cite{Kit05}, Kitaev introduces \emph{partially ordered patterns} (\emph{POPs}) and \emph{partially ordered generalized patterns} (\emph{POGPs}) which further generalize classical DPs (resp., vincular DPs). While in classical patterns, all of the letters form one totally ordered set (e.g. in $123$, $1<2<3$), in POPs this set is partially ordered. In an occurrence of a distant pattern, an element at the place of a $\square$ is incomparable to any other element which shows us that POPs (and POGPs that allow tight constraints) are indeed generalizations. If we have a classical distant pattern or a vincular distant pattern, we could easily write it as a POP (resp., POGP) by replacing each square with a letter in its own group. POPs were studied in the context of permutations, words and compositions in a series of papers \cite{Kit06,Kit03,Kit05Main,Kit05,Kit07,Kit10,Kit11} including a recent work \cite{Kit19} of Gao and Kitaev where a systematic search of connections between sequences in the Online Encyclopedia of Integer Sequences (OEIS) and permutations avoiding POPs of size $4$ and $5$ was conducted. Two other works \cite{callan} and \cite{claessonNon} study avoidance of non-consecutive occurrence of a pattern and this has connections with both POPs and DPs. Another generalization of the DPs are the so-called \emph{place-difference-value patterns} \cite{placeDiff}. 
\subsection{Organization of the results}
\textbf{Section \ref{sec:basic}} contains proofs of two basic facts about DPs which set the stage for the later work.

\textbf{Section \ref{sec:len2}} considers the avoidance of classical DPs of size 2, namely $2 \square 1$ and the more general case $2 \square^{r} 1$. While $|\operatorname{Av}_{n}(2 \square 1)| = |\operatorname{Av}_{n}(1 \square 2)|$ was shown to be given by the Fibonacci numbers $F_{n+1}$ in many ways in the past, we use this old result and a technique that can be generally applied to any classical DP to obtain a new summation formula for $F_{n+1}$ (see Section \ref{sec:rec}). In addition, we establish a bijection between the permutations in $\operatorname{Av}_{n}(2 \square^{r} 1)$ and the permutations in $S_{n}$ for which any two elements in a cycle differ by at most $r$. 

\textbf{Section \ref{sec:len3}} considers the DPs of size $3$. Previous results by Firro and Mansour\cite{thesis, MansourFuncEq}, as well as by Hopkins and Weiler\cite{hopkins} are first listed. Then, we describe briefly our approach towards the enumeration of $\operatorname{Av}_{n}(1\square 3 \square 2)$. The main tool that was used was the block-decomposition approach initiated by Mansour and Vainshtein \cite{MansourVain}. All the details in the proposed approach will be described in a forthcoming article.

In \textbf{Section \ref{sec:vinc}}, we classify the patterns of the form $\underline{ab}\square c$ and $a\square\underline{bc}$. Recursive formulas for $|\operatorname{Av}_{n}(1\square\underline{23})|$ and $|\operatorname{Av}_{n}(1\square\underline{32})|$ are obtained, which help us to show that $1\square\underline{23}$ is the only pattern among $1\square\underline{23}$, $\underline{12}\square 3$, $1\square 2 \square 3$ and $\underline{123}$ that is avoided by fewer permutations of size $n$, compared to the same pattern after we switch the places of $2$ and $3$. This is somewhat surprising since, as we know, $|\operatorname{Av}_{n}(123)| = |\operatorname{Av}_{n}(132)|$ (see \cite{simion}). We study consecutive DPs in \textbf{Section \ref{sec:consecutiveDP}}. We will show a simple but surprising relation between these patterns and the question of avoidance of arithmetic progressions in permutations for which still not much is known.

In \textbf{Section \ref{sec:conj}}, we present direct solutions, without using a computer, for two former conjectures of Kuzsmaul \cite{Kuszmaul} which give the generating functions for the permutations avoiding two big sets of size four patterns. These solutions use formulations of the conjectures in terms of distant patterns. 

\textbf{Section \ref{sec:SW}} is dedicated to some analogues of the Stanley--Wilf former conjecture for distant patterns. We conclude with \textbf{Section \ref{sec:open}} which lists some open problems.

\section{Two basic facts about distant patterns} \label{sec:basic}
Avoidance of classical distant patterns can be formulated as a statement about simultaneous avoidance of classical patterns. For example, avoiding $1 \square 2$ is equivalent to the simultaneous avoidance of the 3-letter classical patterns $\{123,132,213\}$. In the general case, we have the fact below, where \begin{center}
$x^{(y)} \coloneqq x(x-1)\cdots (x-y+1) = \frac{x!}{(x-y)!}$
\end{center}
denotes the falling factorial.
\begin{theorem}\label{th:simultAv}
The avoidance of $q = \square^{r_{0}}q_{1}\square^{r_{1}}q_{2}\square^{r_{2}} \cdots \square^{r_{k-1}}q_{k}\square^{r_{k}}$, where $\sum\limits_{j=0}^{k}r_{j} = S$, is equivalent to the simultaneous avoidance of $(S+k)^{(S)}$ classical patterns of size $S+k$.
\end{theorem}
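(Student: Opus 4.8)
The plan is to exhibit an explicit set $\mathcal{T}\subseteq S_{S+k}$ with $|\mathcal{T}| = (S+k)^{(S)}$ and to prove that a permutation avoids $q$ exactly when it avoids every pattern in $\mathcal{T}$. Think of an occurrence of $q$ as consisting of $k$ \emph{essential} letters (those order-isomorphic to $q_1\cdots q_k$) together with $S$ \emph{filler} letters witnessing the prescribed gaps. The natural template then has $r_0$ fillers, then the first essential letter, then $r_1$ fillers, then the second essential letter, and so on, ending with $r_k$ fillers — a total of $S+k$ slots in which the essential slots sit at positions $e_j := j + \sum_{i=0}^{j-1} r_i$ for $1\le j\le k$. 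Define $\mathcal{T} := \{\tau\in S_{S+k} : \textsf{fl}(\tau_{e_1}\tau_{e_2}\cdots\tau_{e_k}) = q_1 q_2\cdots q_k\}$.

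For the equivalence, first suppose $\pi$ contains some $\tau\in\mathcal{T}$ at positions $p_1 < p_2 < \cdots < p_{S+k}$. Restricting to the essential positions and using that flattening a sub-selection of a flattened sequence agrees with flattening the corresponding sub-selection of the original, one gets $\textsf{fl}(\pi_{p_{e_1}}\cdots\pi_{p_{e_k}}) = q_1\cdots q_k$; and since there are $e_1 - 1 = r_0$ chosen positions before $p_{e_1}$, exactly $e_{j+1}-e_j-1 = r_j$ of them strictly between $p_{e_j}$ and $p_{e_{j+1}}$, and $S+k-e_k = r_k$ after $p_{e_k}$, the gap constraints of $q$ are met, so $\pi$ contains $q$. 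Conversely, if $\pi$ contains $q$ at positions $i_1 < \cdots < i_k$ obeying the gap constraints, then each gap (the two end gaps included) offers at least the prescribed number of positions of $\pi$, so one may select exactly $r_0$ positions before $i_1$, exactly $r_j$ strictly between $i_j$ and $i_{j+1}$, and exactly $r_k$ after $i_k$; merging these with $i_1,\dots,i_k$ yields $p_1 < \cdots < p_{S+k}$ with $p_{e_j} = i_j$, and $\tau := \textsf{fl}(\pi_{p_1}\cdots\pi_{p_{S+k}})$ then lies in $\mathcal{T}$ and is contained in $\pi$.

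Finally I would count $\mathcal{T}$. A permutation $\tau\in\mathcal{T}$ is determined by choosing which $k$ of the values $1,\dots,S+k$ occupy the essential slots ($\binom{S+k}{k}$ choices), after which their arrangement in the slots $e_1,\dots,e_k$ is the unique one order-isomorphic to $q$, and then by arranging the remaining $S$ values in the $S$ filler slots arbitrarily ($S!$ choices); hence $|\mathcal{T}| = \binom{S+k}{k}\,S! = (S+k)!/k! = (S+k)^{(S)}$.

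I do not expect a genuine obstacle: the real content is simply the translation between ``gap of size at least $r_j$'' and ``exactly $r_j$ interpolated positions,'' which is legitimate because a large gap can always be thinned down to the exact width needed, and this thinning does not disturb the relative order of the essential letters. The only points demanding care are the uniform treatment of the two end gaps $\square^{r_0}$ and $\square^{r_k}$ (which the template handles exactly like the internal gaps) and the repeated, implicit use of this transitivity property of $\textsf{fl}$ in both directions of the equivalence.
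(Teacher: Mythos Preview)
Your proof is correct and follows essentially the same approach as the paper: both define the set of classical patterns of size $S+k$ whose entries at the fixed ``essential'' positions $e_1,\dots,e_k$ flatten to $q_1\cdots q_k$, argue the two directions of the equivalence by extracting the essential sub-pattern in one direction and padding an occurrence of $q$ with arbitrary filler positions in the other, and count the set as $\binom{S+k}{k}\cdot S! = (S+k)!/k!$. Your write-up is in fact more explicit than the paper's (notably in naming the positions $e_j$ and in handling the two end gaps uniformly), but there is no substantive difference in the argument.
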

\begin{proof}
If a permutation $\pi$ avoids $q$, then $\pi$ avoids all classical patterns of the kind 
\[q' = p_{0,1} \cdots p_{0,r_{0}}q_{1}'p_{1,1}\cdots p_{1,r_{1}}q_{2}'\cdots q_{k-1}'p_{k-1,1} \cdots p_{k-1,r_{k-1}}q_{k}'p_{k,1} \cdots p_{k,r_{k}},
\]
where $p_{i,j}$ are $S$ distinct numbers from $1$ to $S+k$ and $q_{1}',q_{2}',\cdots ,q_{k}'$ are the remaining $k$ numbers from $1$ to $S+k$ and they are in the same relative order as the numbers $q_{1}q_{2}\cdots q_{k}$, i.e., $\textsf{fl}(q_{1}'$,$q_{2}'$,$\cdots$,$q_{k}') = q_{1}q_{2}\cdots q_{k}$. Indeed, any occurrence of such classical pattern would be an occurrence of $q$. The number of such classical patterns is $\frac{(S+k)!}{k!}$ since the relative order of exactly $k$ of the positions is fixed. Conversely, if $\pi$ avoids all the listed classical patterns, then it does not have an occurrence of $q$. Assume the opposite and take one such occurrence of $q$: $oc = \pi_{x_{1}}\pi_{x_{2}}\cdots \pi_{x_{k}}$. We know that $x_{1}>r_{0}$, $x_{2}-x_{1}>r_{1}$, etc. Select arbitrary $r_{0}$ letters of $\pi$, preceding $\pi_{x_{1}}$, arbitrary $r_{1}$ letters between $\pi_{x_{1}}$ and $\pi_{x_{2}}$ and so forth. You will obtain a subsequence $s = \pi_{y_{1}}\pi_{y_{2}}\cdots \pi_{y_{S+k}}$ of $\pi$ and $\textsf{fl}(s)$ must be one of the already listed classical patterns of size $S+k$ which is a contradiction.
\end{proof}
In fact, if we have some number of squares at the beginning or at the end of a distant pattern, we may consider the same distant pattern, but without those squares due to the following.
\begin{theorem}\label{th:sqBeginEnd}
For any $r_{1},r_{2}>0$ and a distant pattern $q$, we have
\begin{equation}
    |\operatorname{Av}_{n}(\square^{r_{1}}q\square^{r_{2}})| = n^{(r)}|\operatorname{Av}_{n-r}(q)|,
\end{equation}
where $r \coloneqq r_{1}+r_{2}$.
\end{theorem}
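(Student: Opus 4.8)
The plan is to set up an explicit bijection between $\operatorname{Av}_{n}(\square^{r_{1}}q\square^{r_{2}})$ and the set of pairs $(f,\sigma)$, where $f$ is an injection $[r]\to[n]$ and $\sigma\in\operatorname{Av}_{n-r}(q)$; since there are exactly $n^{(r)}$ such injections, the identity follows immediately. The guiding idea is that, because $r_{1},r_{2}>0$, the first $r_{1}$ and last $r_{2}$ entries of a permutation can never participate in an occurrence of $\square^{r_{1}}q\square^{r_{2}}$, so they may be filled in completely freely, while everything that matters happens on the remaining $n-r$ positions.

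First I would isolate the key structural observation. Write $q=\square^{s_{0}}q_{1}\square^{s_{1}}\cdots q_{k}\square^{s_{k}}$, so that $\square^{r_{1}}q\square^{r_{2}}=\square^{r_{1}+s_{0}}q_{1}\square^{s_{1}}\cdots q_{k}\square^{s_{k}+r_{2}}$. If $\pi\in S_{n}$ and $\pi_{x_{1}}\cdots\pi_{x_{k}}$ is an occurrence of this distant pattern, then $x_{1}>r_{1}+s_{0}\ge r_{1}$ and $x_{k}\le n-s_{k}-r_{2}\le n-r_{2}$, so every letter of the occurrence lies in the middle block of positions $M\coloneqq\{r_{1}+1,\dots,n-r_{2}\}$. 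Since $\textsf{fl}$ commutes with passing to subsequences and deleting the first $r_{1}$ and the last $r_{2}$ entries of $\pi$ shifts the positions in $M$ down by $r_{1}$ without changing any gap, one obtains the equivalence: $\pi$ avoids $\square^{r_{1}}q\square^{r_{2}}$ if and only if $\sigma\coloneqq\textsf{fl}(\pi_{r_{1}+1}\pi_{r_{1}+2}\cdots\pi_{n-r_{2}})\in S_{n-r}$ avoids $q$. One direction lifts an occurrence of $q$ in $\sigma$ to an occurrence of $\square^{r_{1}}q\square^{r_{2}}$ in $\pi$ by adding $r_{1}$ to every index (the gap and leading/trailing conditions only relax); the other projects an occurrence of $\square^{r_{1}}q\square^{r_{2}}$ in $\pi$, which by the previous sentence lives entirely in $M$, down to an occurrence of $q$ in $\sigma$.

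With this in hand the bijection is routine. To $\pi\in\operatorname{Av}_{n}(\square^{r_{1}}q\square^{r_{2}})$ I associate the injection $f\colon[r]\to[n]$ given by $f(i)=\pi_{i}$ for $i\le r_{1}$ and $f(r_{1}+j)=\pi_{n-r_{2}+j}$ for $j\le r_{2}$, together with $\sigma=\textsf{fl}(\pi_{r_{1}+1}\cdots\pi_{n-r_{2}})$, which lies in $\operatorname{Av}_{n-r}(q)$ by the equivalence above. Conversely, from an injection $f\colon[r]\to[n]$ and $\sigma\in\operatorname{Av}_{n-r}(q)$ I reconstruct $\pi$ by writing $[n]\setminus f([r])=\{v_{1}<\cdots<v_{n-r}\}$, putting $f(1),\dots,f(r_{1})$ in the first $r_{1}$ positions, $v_{\sigma_{1}},\dots,v_{\sigma_{n-r}}$ in the middle, and $f(r_{1}+1),\dots,f(r_{1}+r_{2})$ in the last $r_{2}$ positions; this is a genuine permutation of $[n]$ whose middle block flattens to $\sigma$, hence it avoids $\square^{r_{1}}q\square^{r_{2}}$. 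The two constructions are visibly mutually inverse, so $|\operatorname{Av}_{n}(\square^{r_{1}}q\square^{r_{2}})|$ equals the number of injections $[r]\to[n]$ times $|\operatorname{Av}_{n-r}(q)|$, i.e. $n^{(r)}|\operatorname{Av}_{n-r}(q)|$.

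The only delicate point — the "hard part", though it is mild — is the equivalence of the previous paragraph, specifically the claim that no letter of an occurrence of $\square^{r_{1}}q\square^{r_{2}}$ can sit among the first $r_{1}$ or last $r_{2}$ positions; this is precisely where $r_{1},r_{2}>0$ is used, and one must be careful that $q$ itself may begin or end with squares, so that $\square^{r_{1}}q\square^{r_{2}}$ should first be put into the normal form $\square^{r_{1}+s_{0}}q_{1}\cdots q_{k}\square^{s_{k}+r_{2}}$ before the argument is applied. Everything else is the bookkeeping that the two maps compose to the identity.
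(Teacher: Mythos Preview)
Your proof is correct and follows essentially the same approach as the paper: both argue that the first $r_{1}$ and last $r_{2}$ positions can be filled freely because any occurrence of $\square^{r_{1}}q\square^{r_{2}}$ is confined to the middle block, and the middle block flattens to a $q$-avoider of size $n-r$. Your version is more careful than the paper's in writing out the bijection explicitly and in noting that $q$ may itself begin or end with squares (so one should normalize to $\square^{r_{1}+s_{0}}q_{1}\cdots q_{k}\square^{s_{k}+r_{2}}$ first); one small aside is slightly off, though harmless: the hypothesis $r_{1},r_{2}>0$ is not actually needed for the argument or the identity, since the same position bounds $x_{1}>r_{1}+s_{0}$ and $x_{k}\le n-s_{k}-r_{2}$ hold and suffice even when $r_{1}$ or $r_{2}$ is zero.
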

\begin{proof}
If $\sigma = \sigma_{1}\sigma_{2}\cdots \sigma_{n}\in \operatorname{Av}_{n}(\square^{r_{1}}q\square^{r_{2}})$, then
\begin{equation*}
\textsf{fl}(\sigma_{r_{1}+1}\cdots \sigma_{n-r_{2}})\in \operatorname{Av}_{n-r}(q).    
\end{equation*}
Conversely, any $\sigma = \sigma_{1}\sigma_{2}\cdots \sigma_{n}$ for which $\sigma_{1},\cdots,\sigma_{r_{1}}, \sigma_{n-r_{2}},\cdots, \sigma_{n}$ are $r$ arbitrary numbers in $[n]$ and for which $\textsf{fl}(\sigma_{r_{1}+1}\cdots \sigma_{n-r_{2}})\in \operatorname{Av}_{n-r}(q)$, would be such that $\sigma\in \operatorname{Av}_{n}(\square^{r_{1}}q\square^{r_{2}})$, since any possible occurrence of $q$ in $\sigma$ would have either less than $r_1$ other elements in front of it or less than $r_{2}$ elements after it.
\end{proof}
The theorem above tells us that it suffices to consider only classical DPs without $\square$ symbols at the beginning or at the end.

\section{Classical DPs of size 2}\label{sec:len2}
By theorem \ref{th:sqBeginEnd}, it suffices to consider $1\square 2$ and $2\square 1$ as the only DPs of size 2. One can obviously see that $|\operatorname{Av}_{n}(2 \square 1)| = |\operatorname{Av}_{n}(1 \square 2)|$, by applying the reverse map. Therefore we have only one Wilf-equivalence class here.
\begin{theorem}\label{th:fibo}
For $n\geq 3$, 
\begin{equation}
|\operatorname{Av}_{n}(2 \square 1)| = F_{n+1},    
\end{equation}
i.e., the $(n+1)$st Fibonacci number.
% , where $F_{1} = F_{2} = 1$ and $F_{n} = F_{n-1} + F_{n-2}$ for $n\geq 3$
\end{theorem}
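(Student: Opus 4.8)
The plan is to establish the Fibonacci recurrence directly, by pinning down the position of the smallest letter, and then to match base cases. Set $a_n \coloneqq |\operatorname{Av}_n(2\square 1)|$. First I would unpack the definition: a permutation $\pi = \pi_1\pi_2\cdots\pi_n$ lies in $\operatorname{Av}_n(2\square 1)$ exactly when $\pi_i < \pi_j$ for every pair $i < j$ with $j \ge i+2$; equivalently, every entry is smaller than all entries lying at least two positions to its right, so the only inversions permitted are between adjacent positions.

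Next I would locate the value $1$. If $\pi_k = 1$ with $k \ge 3$, then $(i,j) = (1,k)$ witnesses an occurrence of $2\square 1$, since $\pi_1 > 1 = \pi_k$ and $k \ge 1+2$; hence $1$ occupies position $1$ or position $2$. This gives a two-case split. If $\pi_1 = 1$, then position $1$ cannot serve as the ``$2$'' of any occurrence (nothing lies below $1$), so $\pi \in \operatorname{Av}_n(2\square 1)$ iff $\textsf{fl}(\pi_2\cdots\pi_n) \in \operatorname{Av}_{n-1}(2\square 1)$, contributing $a_{n-1}$. If $\pi_2 = 1$, then avoiding the pattern forces $\pi_1 < \pi_j$ for all $j \ge 3$ (position $1$ again cannot be the ``$2$''), and since $\pi_1 \ne 1$ this forces $\pi_1 = 2$; positions $1$ and $2$ can then never be the ``$2$'' of an occurrence, so avoidance is equivalent to $\textsf{fl}(\pi_3\cdots\pi_n) \in \operatorname{Av}_{n-2}(2\square 1)$, contributing $a_{n-2}$. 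Hence $a_n = a_{n-1} + a_{n-2}$ for $n \ge 3$.

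Finally I would record the base cases: for $n \le 2$ there is no pair $i<j$ with $j \ge i+2$, so every permutation avoids $2\square 1$, giving $a_1 = 1 = F_2$ and $a_2 = 2 = F_3$. The recurrence then yields $a_n = F_{n+1}$ for all $n$, in particular for $n \ge 3$.

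I do not expect a real obstacle; the only point requiring a little care is checking, in each case, that passing to the flattened suffix neither destroys an existing occurrence nor introduces a new one. That is exactly the content of the remarks above: the global minimum — and, in the second case, the value $2$ sitting immediately before it — can never play the role of the ``$2$'' in an occurrence of $2\square 1$, and an occurrence confined to the remaining suffix is faithfully seen through $\textsf{fl}$. As an alternative to the direct recursion, one could instead invoke Theorem~\ref{th:simultAv}, which turns avoidance of $2\square 1$ into simultaneous avoidance of the three classical patterns $231$, $312$, $321$, and then cite the enumeration of triples of length-$3$ patterns due to Simion and Schmidt~\cite{simion}.
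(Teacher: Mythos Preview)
Your proof is correct and follows essentially the same approach as the paper's: both derive the Fibonacci recurrence by locating an extremal value and splitting into two cases, the only difference being that you track the position of $1$ (forcing $\pi_1=1$ or $\pi_1\pi_2=21$) while the paper tracks the position of $n$ (forcing $\pi_n=n$ or $\pi_{n-1}\pi_n=n(n-1)$). Your base cases $a_1=1$, $a_2=2$ are in fact slightly cleaner than the paper's choice of $n=3,4$.
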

\begin{proof}
If $p_{1}p_{2}\cdots p_{n}\in \operatorname{Av}_{n}(2 \square 1)$, then either $p_{n} = n$ or $p_{n-1} = n$ since otherwise $n$ will participate in an inversion that is not of consecutive letters. If $p_{n} = n$ then $p_{1}p_{2}\cdots p_{n-1}$ must be in $\operatorname{Av}_{n-1}(2 \square 1)$, and for each permutation in $\operatorname{Av}_{n-1}(2 \square 1)$, we obtain a permutation in $\operatorname{Av}_{n}(2 \square 1)$ after appending the letter $n$ at the end. Thus, we have $|\operatorname{Av}_{n-1}(2 \square 1)|$ permutations in $\operatorname{Av}_{n}(2 \square 1)$ ending with $n$. If $p_{n-1} = n$, then we must have $p_{n} = n-1$ to prevent $n-1$ from forming a prohibited inversion with $p_{n}$. Thus, in this second case we must have $p_{n-1} = n$ and $p_{n} = n-1$ and the prefix $p_{1}p_{2}\cdots p_{n-2}$ must be a permutation in $\operatorname{Av}_{n-2}(2 \square 1)$. For each such permutation in $A_{n-2}(2 \square 1)$, we obtain a new one in $\operatorname{Av}_{n}(2 \square 1)$ by appending $n$ and then $n-1$ to it. Therefore $|\operatorname{Av}_{n}(2 \square 1)| = |\operatorname{Av}_{n-1}(2 \square 1)| + |A_{n-2}(2 \square 1)|$. It remains to note that $|A_{3}(2 \square 1)| = 3$ and $|A_{4}(2 \square 1)| = 5$.
\end{proof}
This basic result was proved in the seminal paper of Simion and Schmidt \cite{simion}, where they showed that $|\operatorname{Av}_{n}(123,132,213)| = F_{n+1}$ which as we explained (Theorem \ref{th:simultAv}) is equivalent to the fact above. 

One can consider more general settings for distant patterns and look at bigger values of the maximal distance between two consecutive letter of a pattern. Recall that $\operatorname{Av}_{n}(2 \square^{r} 1)$ is the set of all $p_{1}p_{2}\cdots p_{n}\in S_{n}$ with no inversion $(p_{i},p_{j})$, such that $|i-j|>r$. Apparently, $|\operatorname{Av}_{n}(2 \square^{m} 1)| = n!$ for $n \leq m
+1$ and $|\operatorname{Av}_{n}(2 \square^{0} 1)| = |\operatorname{Av}_{n}(21)| = 1$. The theorem below addresses the general case.
\begin{theorem}\label{th:bijection}
The permutations in $\operatorname{Av}_{n}(2\square^{r}1)$ are in one-to-one correspondence with the permutations in $S_n$ for which, when written in a cycle notation, any two elements in a cycle differ by at most $r$.
\end{theorem}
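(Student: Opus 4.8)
The plan is to build the bijection so that it commutes with the \emph{direct-sum} decomposition on both sides. Say $i$ is a \emph{splitting point} of $\pi\in S_n$ if $\{\pi_1,\dots,\pi_i\}=\{1,\dots,i\}$, and call $\pi$ \emph{sum-indecomposable} if it has no splitting point with $0<i<n$; every $\pi$ factors uniquely as $\pi=\rho_1\oplus\cdots\oplus\rho_m$ with each $\rho_t$ sum-indecomposable, $\rho_t$ being the block read between consecutive splitting points. I would first record two compatibility facts. (i) The inversions of $\alpha\oplus\beta$ are exactly those of $\alpha$ together with those of $\beta$, with unchanged position-gaps and with no inversion straddling the two blocks; hence $\alpha\oplus\beta$ avoids $2\square^{r}1$ iff $\alpha$ and $\beta$ both do. (ii) The cycles of $\alpha\oplus\beta$ are the cycles of $\alpha$ together with the shifted cycles of $\beta$, with unchanged diameters; hence $\alpha\oplus\beta$ has the ``$r$-bounded cycle'' property (any two elements of a cycle differ by at most $r$) iff $\alpha$ and $\beta$ both do. By (i)--(ii) it suffices to give, for each $k$, a bijection $\phi_k$ between the set $I_k$ of sum-indecomposable members of $\operatorname{Av}_k(2\square^{r}1)$ and the set $J_k$ of sum-indecomposable $r$-bounded-cycle permutations of $[k]$; then $\Phi(\rho_1\oplus\cdots\oplus\rho_m):=\phi_{|\rho_1|}(\rho_1)\oplus\cdots\oplus\phi_{|\rho_m|}(\rho_m)$ is a bijection, because each factor on the right is again sum-indecomposable, so the right-hand side is already its own sum-decomposition.

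Next I would locate the finitely many relevant $I_k,J_k$. For $k\le r+1$ every permutation of $[k]$ both avoids $2\square^{r}1$ (all inversions have gap $\le k-1\le r$) and is $r$-bounded-cycle (a cycle lies in $[1,k]$, so its elements differ by $\le r$), so $I_k=J_k$ is the set of sum-indecomposable permutations of $[k]$ and $\phi_k$ is the identity. For $k\ge r+2$ both sets are small: with $M_i:=\max(\pi_1,\dots,\pi_i)$, avoidance forces $M_i\le i+r$ (a pigeonhole argument on the values below $M_i$ that lie after position $i$) while sum-indecomposability forces $M_i\ge i+1$ for $1\le i<k$, and together with the remaining avoidance inequalities this should give $I_k=\varnothing$ for $k>2r$; dually, a sum-indecomposable $r$-bounded-cycle permutation of $[k]$ is a family of pairwise-disjoint cycles, each of span $\le r$, whose ``gap intervals'' $[\min C,\max C-1]$ cover $\{1,\dots,k-1\}$, which again should force $k\le 2r$. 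So for fixed $r$ only the range $r+2\le k\le 2r$ needs real work, and there $I_k,J_k$ are explicit (small) sets which are \emph{not} equal in general: already for $r=2,\ k=4$ one has $I_4=\{2413\}$ but $J_4=\{3412\}$.

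Producing $\phi_k$ for $r+2\le k\le 2r$ is the heart of the proof and, I expect, the main obstacle. The natural attack is a secondary reduction inside the indecomposable families: in a sum-indecomposable $\pi\in\operatorname{Av}_k(2\square^{r}1)$ the value $1$ occupies one of the first $r+1$ positions and the value $k$ one of the last $r+1$, and these, with the avoidance inequalities, pin down the middle; on the cycle side the cycle of $1$ lies in $\{1,\dots,r+1\}$ and the cycle of $k$ in $\{k-r,\dots,k\}$. Peeling off the ``small end'' and the ``large end'' on each side and matching the reduced data should define $\phi_k$. The delicate part is precisely that for $k$ near $2r$ these two ends overlap (they are only $r$ apart when $k=2r$), so the peelings interfere and the reduction is not a clean product; controlling this interaction uniformly in $r$ is where the care is needed. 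An alternative that avoids explicit peeling is to compute the generating polynomials $\sum_k|I_k|x^k$ and $\sum_k|J_k|x^k$ separately --- e.g.\ by a transfer matrix whose states record the order-and-occupancy pattern on a sliding window of $r$ consecutive positions (respectively values) --- check that they agree, and then exhibit $\phi_k$ by a finite case analysis for each $k\in[r+2,2r]$.
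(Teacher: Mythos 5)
Your reduction to sum-indecomposable blocks is sound: facts (i) and (ii) are both correct, the blockwise map $\Phi$ is a bijection once the $\phi_k$ exist, and your computation $I_4=\{2413\}$, $J_4=\{3412\}$ for $r=2$ correctly shows that the two indecomposable families coincide only for $k\le r+1$ and genuinely differ afterwards. But this means the entire content of the theorem has been pushed into the range $r+2\le k\le 2r$, and there you do not produce $\phi_k$ --- you do not even establish $|I_k|=|J_k|$. Every step in that range is flagged with ``should'': the emptiness of $I_k$ and $J_k$ for $k>2r$ is asserted but not proved, the ``peeling'' of the small and large ends is acknowledged to break down precisely when the two ends overlap (which happens throughout the interesting range, not just at $k=2r$), and the fallback of a transfer-matrix computation plus ``finite case analysis for each $k\in[r+2,2r]$'' is not a finite task, since the number of such $k$ and the sizes of $I_k,J_k$ grow with $r$. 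As written, the proposal is a correct reduction followed by an unproven core, so it does not constitute a proof.

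For comparison, the paper's argument is a direct global construction that sidesteps any decomposition. Given $p\in\operatorname{Av}_n(2\square^r 1)$, it builds the cycles of $f(p)$ straight from the inversion structure: the first cycle consists of position $1$ together with the positions of the values $1,2,\dots,p_1-1$ (all of which are in inversion with $p_1$, hence lie within distance $r$ of position $1$), listed in a prescribed order; one then moves to the smallest unused position and repeats. The gap-$r$ condition on inversions translates immediately into the diameter-$r$ condition on cycles, the inverse map is explicit, and the argument is uniform in $n$ and $r$. If you want to salvage your approach, the honest route is to prove a uniform statement about $I_k$ versus $J_k$ for all $k$ --- at which point you would essentially be rediscovering a restriction of the paper's map to indecomposable blocks --- so the decomposition buys you little here.
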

\begin{proof}
Let $X = \operatorname{Av}_{n}(2 \square^{r} 1)$ be the set of all permutations in $S_n$ that do not have inversions at distance greater than $r$ in their one-line notation representation. Let $Y$ be the set of those permutations in $S_n$ for which any two elements in the same cycle differ by at most $r$. We will describe a bijective map $f:X \to Y$. Consider $p = p_{1}p_{2}\cdots p_{n} \in X$. We will show how to obtain the standard form of $f(p)$ written in cycle notation, i.e., the minimal element of every cycle is at its first position and the cycles are ordered in increasing order of their minimal elements.
Below is the description of $f$: \newline
The number $p_1$ is at position 1, so let us look at the set of positions of all numbers with which $p_1$ is in inversion: $1,2,\cdots , p_{1} - 1$. Denote their positions with $i_{1},i_{2},\cdots , i_{p_{1}-1}$, respectively. These positions are not bigger than $r+1$, since $p\in X$. Then take $(1,i_{p_{1}-1},i_{p_{1}-2},\cdots , i_{1})$ to be the first cycle in the standard form of the cycle decomposition for $f(p)$. Then, let $j$ be the minimal number that is not already used in this cycle decomposition, and let the numbers $p_{j}-1, \cdots , p_{1}+1$ be at positions $j_{p_{j}-1},j_{p_{j}-2}, \cdots , j_{p_{1}+1}$. Take $(j,j_{p_{j}-1},j_{p_{j}-2}, \cdots , j_{p_{1}+1})$ as the next cycle in the standard form of the cycle decomposition for $f(p)$ and continue in the same way afterwards. Note that the length of some of those cycles might be $1$.

Here are two examples: 
\begin{itemize}
    \item If $n=9,r=3$ and $p = 352149867$, then $f(352149867) = (134)(25)(6798)$. 
    \item If $n=8,r=4$ and $p = 41352867$, then $f(41352867)=(1352)(4)(687)$. 
\end{itemize}

Obviously, $f$ maps each $\sigma \in X$ to a permutation $f(\sigma)$ such that any two numbers in the same cycle of $f(\sigma)$ differ by at most $r$, since these two numbers correspond to indices of two numbers, in the one-line notation of $\sigma$, which are in inversion in $\sigma\in X$. To prove that $f$ is indeed a bijection, we will describe its inverse. Consider $\pi\in Y$ in its standard cycle decomposition form. If the first cycle of $\pi$ is $(\pi_{1}\pi_{2}\cdots \pi_{i_{1}})$, then put the number $i_{1}$ in the first place, then $i_{1}-1$ at position $\pi_{2}$, $i_{1}-2$ at position $\pi_{3}$ and so on. The number $1$ will be placed at position $\pi_{i_{1}}$. Note also that $\pi_{1}$ is always $1$. Next, go to the next cycle $(\pi_{j_{1}}\pi_{j_{2}}\cdots \pi_{j_{i_{2}}})$. We will determine the positions of the next $i_{2}$ numbers: $i_{1}+1,i_{1}+2,\cdots ,i_{1}+i_{2}$. We can see that $\pi_{j_{1}}$ must be the least integer not occurring in the first given cycle. We will place at this position, the number $i_{1}+i_{2}$. Then, $i_{1}+i_{2}-1$ should be placed at position $\pi_{j_{2}}$, $i_{1}+i_{2}-2$ at position $\pi_{j_{3}}$ and so on. One can use the two given examples above for verification.
\end{proof}

The sequences of the numbers $|\operatorname{Av}_{n}(2 \square^{r} 1)|$, for different fixed values of $n$ and $r$ are respectively rows and columns of the table described in \cite[A276837]{OEIS}. For example, in the case $r=2$, if we denote $a_{n} = |\operatorname{Av}_{n}(2 \square^{2} 1)|$, then it is not hard to prove the recurrence 
\begin{equation}
a_{n} = a_{n-1} + a_{n-2} + 3.a_{n-3} + a_{n-4}.
\end{equation}
\subsection{Recurrence formula for $F_{n+1} = |\operatorname{Av}_{n}(1\square 2)|$}
\label{sec:rec}
In an attempt to obtain a general enumeration approach when dealing with DPs, we tried to use a technique that is described in the current subsection. The technique helped us to obtain a recurrence formula for the number of permutations avoiding the distant pattern $1 \square 2$, i.e., a new recurrence formula for the Fibonacci numbers (see Theorem \ref{th:fibo}). The same technique can be used when trying to enumerate the set of avoiders for other DPs.

The idea is that almost all permutations containing a given distant pattern can be obtained by first taking a permutation containing the corresponding classical pattern and then inserting additional numbers between some of the letters (where we have the $\square$ symbol) for a certain occurrence of this classical pattern. Let us describe this more concretely with the following algorithm that we will use for the pattern $1 \square 2$.
\begin{description}
\item [\textbf{Algorithm 1}]\
\begin{enumerate}
\item[1.] For a given $n\geq 3$ and $j\in [n]$, take any $\pi\in S_{n-1}\setminus \operatorname{Av}_{n-1}(12)$.

% 2. Find the first occurrence of the classical pattern 12 and insert the number $j$ immediately after the $1$ in this first occurrence.
\item[2.] Find the leftmost $1$ that is part of a classical $12$-pattern and insert the number $j$ immediately after it.

\item[3.] Increase by $1$ the numbers $j,j+1,\ldots ,n-1$, except the $j$ that we just inserted (unless $j=n$, $\pi$ contains another $j$).
\end{enumerate}
\end{description}

This algorithm defines a map $g: A_{n-1} \to B_{n}$, where
$A_{n-1} = (S_{n-1}\setminus \operatorname{Av}_{n-1}(12))\times [n]$ and $B_{n} = S_{n} \setminus \operatorname{Av}_{n}(1 \square 2)$. Here is an example: $g(3412,2) = 42513$. The leftmost occurrence of the pattern $12$ in $3412$ is by the first two letters, $3$ and $4$. Therefore we insert $j=2$ immediately after the letter $3$ and then increase the $2,3$ and $4$ in the original permutation. Note that the added number $j$ always keep its value in the final image.
We will first need to show that this map is not far from being injective.
\begin{theorem} \label{th:almostInjective}
No permutation in $B_{n} = S_{n} \setminus \operatorname{Av}_{n}(1 \square 2)$, the range of the map $g$, is the image of more than two different elements of $A_{n-1} = (S_{n-1}\setminus \operatorname{Av}_{n-1}(12))\times [n]$.
\end{theorem}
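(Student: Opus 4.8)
The plan is to observe first that $g$ inserts exactly one new entry, so a preimage of a given $\sigma\in B_{n}$ is determined by the single number $p$ recording at which position that entry sits in $\sigma$: once $p$ is fixed, the inserted value must be $j=\sigma_{p}$ (the algorithm never alters the inserted letter), and the first coordinate of the preimage is forced to be the permutation $\tau^{(p)}\in S_{n-1}$ obtained from $\sigma$ by deleting $\sigma_{p}$ and decreasing by $1$ every remaining entry larger than $\sigma_{p}$. Conversely, running $g$ on $(\tau^{(p)},\sigma_{p})$ returns $\sigma$ if and only if $\tau^{(p)}$ contains a $12$-pattern and the leftmost entry of $\tau^{(p)}$ playing the role of the ``$1$'' in an occurrence of $12$ lies in position $p-1$ — Step~2 then inserts $\sigma_{p}$ into position $p$, and Step~3 exactly reverses the deletion/decrement. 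Call such a $p$ \emph{admissible}. Since the inserted letter is always placed \emph{after} an existing one, every admissible $p$ satisfies $p\ge2$. So the statement reduces to showing that $\sigma$ has at most two admissible positions.

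Next I would pin down what admissibility forces locally. In any $\tau\in S_{m}$, the leftmost ``$1$''-of-a-$12$-pattern lies in position $t$ precisely when $\tau_{1}=m,\ \tau_{2}=m-1,\ \dots,\ \tau_{t-1}=m-t+2$ and $\tau_{t}\ne m-t+1$, i.e.\ $\tau$ opens with the $t-1$ largest values in decreasing order and its $t$-th entry is not the largest value still available (this also forces $\tau\notin\operatorname{Av}_{m}(12)$). Applying this to $\tau^{(p)}\in S_{n-1}$ with $t=p-1$, admissibility of $p$ becomes: $2\le p\le n-1$, $\ \tau^{(p)}_{i}=n-i$ for all $i\le p-2$, and $\tau^{(p)}_{p-1}\le n-p$. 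Since $\tau^{(p)}_{i}=\sigma_{i}-[\sigma_{i}>\sigma_{p}]$ for $i<p$ (the bracket being $1$ if $\sigma_{i}>\sigma_{p}$ and $0$ otherwise), this can be rewritten as conditions directly on $\sigma$: the prefix $\sigma_{1}>\sigma_{2}>\dots>\sigma_{p-2}$ is decreasing with each $\sigma_{i}\in\{n-i,\,n-i+1\}$, and $\sigma_{p-1}\le n-p+1$.

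Then I would rule out three admissible positions. Suppose $p_{1}<p_{2}<p_{3}$ are all admissible; then $p_{1}\ge2$, so $1\le p_{1}-1\le p_{2}-2$ and $p_{2}-1\le p_{3}-2$. From admissibility of $p_{3}$ I get $\tau^{(p_{3})}_{p_{2}-1}=n-p_{2}+1$, hence $\sigma_{p_{2}-1}\ge n-p_{2}+1$; admissibility of $p_{2}$ gives $\sigma_{p_{2}-1}\le n-p_{2}+1$; so $\sigma_{p_{2}-1}=n-p_{2}+1$. Plugging this into $\tau^{(p_{2})}_{p_{2}-1}\le n-p_{2}$ forces the shift $[\sigma_{p_{2}-1}>\sigma_{p_{2}}]$ to be $1$, i.e.\ $\sigma_{p_{2}}\le n-p_{2}$. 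Now for each $i\le p_{2}-2$ the relation $\sigma_{i}-[\sigma_{i}>\sigma_{p_{2}}]=n-i$ together with $\sigma_{p_{2}}\le n-p_{2}<n-i$ kills the option $\sigma_{i}=n-i$, leaving $\sigma_{i}=n-i+1$. Taking $i=p_{1}-1$ yields $\sigma_{p_{1}-1}=n-p_{1}+2$, contradicting $\sigma_{p_{1}-1}\le n-p_{1}+1$, which comes from admissibility of $p_{1}$. Hence there are at most two admissible positions, and therefore at most two preimages.

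The hard part is the bookkeeping of the renormalisation shifts $\sigma_{i}=\tau^{(p)}_{i}+[\tau^{(p)}_{i}\ge\sigma_{p}]$: distinct candidate positions use distinct pivots $\sigma_{p}$, so an entry that is ``large'' relative to one candidate need not be large relative to another, and one must track whether the pivot lands inside or outside the block of top values occupying the initial decreasing run, as well as the boundary cases where $p$ is close to $1$ or to $n$. Once that is organised, the rest is the short chain of inequalities above. As a sanity check — and to explain why one cannot hope for ``at most one'' — note that the bound of two is attained: $\sigma=3142$ equals both $g(231,1)$ and $g(312,4)$.
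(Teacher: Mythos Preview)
Your argument is correct. Both you and the paper begin the same way, observing that a preimage $(\pi',j)$ of $\sigma$ is determined by the position $p$ at which the inserted value sits, and that distinct preimages force distinct positions. The contradictions, however, are obtained differently. The paper argues structurally about the leftmost $12$-occurrence: if $c_1<c_2<c_3$ are three insertion positions, then removing $\sigma_{c_2}$ must destroy \emph{every} ascending pair starting at $\sigma_{c_1-1}$, so $\sigma_{c_2}$ is the unique entry to the right of and larger than $\sigma_{c_1-1}$; consequently, removing $\sigma_{c_3}$ leaves the ascending pair $\sigma_{c_1-1}\sigma_{c_2}$ intact, contradicting that the leftmost $12$ should now start at position $c_3-1$. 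You instead translate admissibility into an explicit numerical description of the prefix of $\sigma$ (each $\sigma_i\in\{n-i,\,n-i+1\}$ for $i\le p-2$, and $\sigma_{p-1}\le n-p+1$) and squeeze out a contradiction from the three resulting systems of inequalities. Your route is longer and demands careful bookkeeping of the $\pm1$ flattening shifts, but it produces exactly the structural description of the prefix that the paper only develops later, in the proof of Theorem~\ref{th:exactlyTwo}; the paper's route is shorter and cleaner for the present statement but defers that analysis.
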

\begin{proof}
Assume the opposite. Let $\pi  = g(\pi_{1},j_{1}) = g(\pi_{2},j_{2}) = g(\pi_{3},j_{3})$ for three different tuples $(\pi_{1},j_{1}), (\pi_{2},j_{2}), (\pi_{3},j_{3}) \in A_{n} = (S_{n-1}\setminus \operatorname{Av}_{n}(12))\times [n]$. We can see that $j_{1},j_{2}$ and $j_{3}$ must be different since if two of them, say $j_{1}$ and $j_{2}$, are equal then obviously $\pi_{1} = \pi_{2}$ and we will not have different tuples. Now, we know that without loss of generality $1 < c_{j_1} < c_{j_2} < c_{j_3}$ are three different positions for the three different numbers $j_{1},j_{2},j_{3}$ in the final permutation $\pi$. By step 2 of Algorithm 1, after removing $j_1$ from $\pi$, the first occurrence of the classical pattern 12, should be some $\pi_{x}\pi_{y}$, where $x = c_{j_1}-1$. Similarly, after removing $j_2$, the first such occurrence should begin at position $c_{j_{2}}-1 > c_{j_{1}}-1$, but this is only possible if the position $y = c_{j_2}$ since if this is not the case then $\pi_{x}\pi_{y}$ would be an occurrence of 12 that begins before position $c_{j_{2}}-1$. However, after removing $j_3$ from $\pi$ (note that $c_{j_{3}}>c_{j_{2}}$), the first occurrence of 12 should begin at position $c_{j_{3}}-1 > c_{j_1}-1$. Contradiction.
\end{proof}
There are many permutations in $B_{n}$ which are the image of $g$ for two different elements of $A_{n-1}$. An example is $3142\in B_{4}$ since $g(312,4) = g(231,1) = 3142$. The next fact that we will need gives the number of these permutations.
\begin{theorem}\label{th:exactlyTwo}
The number of permutations $\omega$ in $B_{n}$ which are an image of exactly two different elements of $A_{n-1}$, after applying the map g, is given by the sum
\begin{equation}
    \sum\limits_{j=3}^{n-1} (j-2)(n-j)(n-j)!.
\end{equation}
\end{theorem}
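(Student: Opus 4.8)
The plan is to characterize exactly which permutations $\omega \in B_n$ are hit twice by $g$, and then count those directly. From the proof of Theorem \ref{th:almostInjective}, if $\omega = g(\pi_1,j_1) = g(\pi_2,j_2)$ with $j_1 < j_2$, then the inserted letters sit at consecutive positions, say $x+1$ and $x+2$, where $\omega_x \omega_{x+1}$ — or rather the pair of positions before and after the first-inserted letter — records the leftmost $12$-occurrence. So the first step is to establish the combinatorial fingerprint of a doubly-covered $\omega$: there are two consecutive positions in $\omega$ (say positions $p$ and $p+1$) whose entries, call them $j_1$ at position $p$ and $j_2$ at position $p+1$ after the appropriate relabeling, are such that removing $j_1$ makes the leftmost $12$-pattern start at position $p-1$ and end at position $p+1$, and removing $j_2$ makes the leftmost $12$-pattern start at position $p$. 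I would make precise that this forces: (i) $\omega_1 \omega_2 \cdots \omega_{p-1}$ (with $j_1$ still present, i.e. the relevant deletion) is $12$-avoiding up to that point, (ii) the value at position $p-1$ is smaller than both $j_1$ and $j_2$, creating the ascent, and (iii) $j_2 = j_1 + 1$ is forced by step 3 of Algorithm 1 (the inserted value keeps its value, and the "other" preimage must have inserted the neighboring value).

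Next I would set up the count by summing over the common inserted value. Writing $j$ for the value with $j_1 = j-1$... more cleanly: fix the position where the two inserted letters go and fix the value $j$ that plays the pivotal role; the claimed sum $\sum_{j=3}^{n-1}(j-2)(n-j)(n-j)!$ strongly suggests the following decomposition. The factor $(n-j)!$ counts the arrangements of the "large" letters $\{j+1,\dots,n\}$ (there are $n-j$ of them) in the positions after the second inserted letter, with no constraint — consistent with the fact that once the forbidden $12$-pattern is locked in at the front, the suffix is free. The factor $(j-2)$ should count the choice of which small value (from $\{2,\dots,j-1\}$, excluding $1$ which must be forced somewhere) sits at the ascent-bottom position $p-1$, or equivalently the choice of $j_1$-versus-$j_2$ splitting relative to a fixed prefix structure. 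The remaining factor $(n-j)$ I expect to count the position $p$ at which the inserted block sits among the $n-j$ viable slots determined by the large letters. So the second step is: for each $j$, carefully count (prefix structure) $\times$ (slot for the inserted pair) $\times$ (free suffix) and verify it equals $(j-2)(n-j)(n-j)!$; then sum over $j$ from $3$ to $n-1$ (the range where both a nontrivial ascent-bottom choice and a nonempty large-letter block exist).

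The main obstacle will be pinning down the prefix structure precisely — i.e. proving that "being a double image" is equivalent to a clean, countable local condition, with no overcounting or undercounting. In particular I must check that the two preimages $(\pi_1,j_1),(\pi_2,j_2)$ are genuinely the only two, that they really are distinct elements of $A_{n-1}$ (both $\pi_1,\pi_2$ must themselves contain a classical $12$, which needs the large-letter block or the prefix ascent to survive the deletion), and that different triples $(j,\text{bottom value},\text{slot})$ yield genuinely different permutations $\omega$. A secondary check is the boundary behavior: confirming that $j=2$ and $j=n$ contribute nothing (for $j=2$ there is no room for the factor $j-2$; for $j=n$ the suffix block is empty and one of the two putative preimages degenerates, as the parenthetical remark in step 3 of Algorithm 1 already flags), which matches the summation limits. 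Once the equivalence is nailed down, the arithmetic is immediate and the sum is exactly as stated.
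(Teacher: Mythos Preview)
Your proposal rests on two structural claims that are both false, and they are the crux of your counting: (1) that the two inserted letters sit at \emph{consecutive positions} in $\omega$, and (2) that their \emph{values} differ by $1$. Neither follows from the proof of Theorem~\ref{th:almostInjective}. That proof only shows that if the inserted letters sit at positions $c_{j_1}<c_{j_2}$, then after removing the letter at $c_{j_1}$ the leftmost $12$ is $\omega_{c_{j_1}-1}\omega_{c_{j_2}}$; this forces nothing about $c_{j_2}-c_{j_1}$. A concrete counterexample: $\omega=43152\in B_5$ satisfies $g(3142,3)=g(4312,5)=43152$, with the inserted letters at positions $2$ and $4$ (not adjacent) and values $3$ and $5$ (not consecutive).

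Because of this, your reading of the summand is off in every factor. In the paper's argument $j$ is the \emph{position} (not the value) of the second inserted letter; the factor $(j-2)$ counts the choices for the position $i\in\{2,\dots,j-1\}$ of the first inserted letter; and $(n-j)(n-j)!=(n-j+1)!-(n-j)!$ counts the relative orderings of the $n-j+1$ entries $\omega_{j-1},\omega_{j+1},\dots,\omega_n$ subject only to the constraint that $\omega_{j-1}$ is not the largest among them. The real structural characterization of a doubly-hit $\omega$ is that $\omega_1>\omega_2>\cdots>\omega_{j-1}$, that $\omega_j$ slots in value between $\omega_{i-2}$ and $\omega_{i-1}$, and that $\omega_1,\dots,\omega_{j-2}$ all dominate the tail $\omega_{j+1},\dots,\omega_n$; your ``prefix descending to an ascent, then a free block of large letters'' picture does not match this and would both over- and under-count.
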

\begin{proof}
Let $\pi = g(\pi_{1},x) = g(\pi_{2},y)$ for $\pi_{1}, \pi_{2} \in S_{n-1}\setminus \operatorname{Av}_{n-1}(12)$ and $x,y \in [n]$, where the tuples $(\pi_{1},x)$ and $(\pi_{2},y)$ are different. We saw in the proof of Theorem \ref{th:almostInjective} that $x$ and $y$ must be different. Let us denote the positions of $x$ and $y$ in $\pi$ with $i$ and $j$ respectively. Without loss of generality, let $i<j$. We know that after removing $y = \pi_{j}$ from $\pi$, then $\pi_{j-1}\pi_{j+k}$, for some $k \geq 1$, is the first occurrence of the classical pattern 12. Therefore, we should have $\pi_{1}>\pi_{2}> \cdots > \pi_{j-1}$. Since, if we remove $x = \pi_{i}$ from $\pi$, then $\pi_{i-1}\pi_{j}$ must be the first occurrence of $12$, it follows that we must have $\pi_{1}>\pi_{2}> \cdots > \pi_{i-2} > \pi_{j} > \pi_{i-1} > \cdots >\pi_{j-1}$. In other words, the number $\pi_{j}$ is between $\pi_{i-2}$ and $\pi_{i-1}$. Otherwise, we would have a $12-$occurrence ending at $\pi_{j}$ that starts before position $i-1$. We also have that $\pi_{j-t}>\pi_{j+l}$, for any $t = 2,\cdots , j-1$ and $l = 1,\cdots , n-j$, because otherwise when removing $\pi_{j}$ from $\pi$, a $12$-occurrence starting before $\pi_{j-1}$ will be present. 

In order to determine $\pi$ completely, we must have the relations between the $n-j+1$ numbers $\pi_{j-1},\pi_{j+1},\pi_{j+2},\cdots , \pi_{n}$. The only constraint that we have is that $\pi_{j-1}$ is not the biggest among them. Thus, when $i$ and $j$ are fixed, we always have $(n-j+1)! - (n-j)!$ possible ways to write $\pi$. Therefore, the number of different permutations $\pi \in S_{n}$ that are an image for two different tuples is 
\begin{center}
$\sum\limits_{j=3}^{n-1}\sum\limits_{i=2}^{j-1}[(n-j+1)! - (n-j)!] = \sum\limits_{j=3}^{n-1} (j-2)(n-j)(n-j)!$.
\end{center}
Each term in the latter sum gives us the number of permutations in $\pi\in B_{n}$, where $\pi = g(\pi_{1},x) = g(\pi_{2},y)$ for some $\pi_{1}, \pi_{2} \in S_{n-1}\setminus \operatorname{Av}_{n-1}(12)$ and $x,y \in [n]$, where $x<y$ and $y$ is at position $j$ in $\pi$.
\end{proof}
As we have seen that no permutation in $B_{n}$ is counted more than two times, it remains to obtain the number of permutations in $B_{n}$ that are not an image of $g$ for any permutation in the set of tuples $A_{n-1}$.
\begin{theorem}\label{th:missing}
The number of permutations in the set $B_{n} \setminus g(A_{n-1})$ is:
\begin{equation}
    \sum\limits_{k=3}^{n-2} (F_{n-k+1}-1)k(k-2)(k-2)! ,
\end{equation}
where $F_{i}$ denotes the $i$-th Fibonacci number.
\end{theorem}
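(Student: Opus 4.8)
The plan is to pin down exactly which permutations in $B_n$ lie in the image of $g$, and then to describe the complement and count it. For the first part I would argue as in the proof of Theorem~\ref{th:exactlyTwo}: a permutation $\pi\in B_n$ equals $g(\sigma,\pi_m)$ for some $m$ iff the permutation $\sigma$ obtained from $\pi$ by deleting the entry in position $m$ and standardizing has its leftmost ``$1$'' of a classical $12$ at position $m-1$ (and still contains a $12$). Unwinding ``leftmost $1$ of a $12$ at position $m-1$'' forces $\pi_1,\dots,\pi_{m-2}$ to be the $m-2$ largest values of $[n]\setminus\{\pi_m\}$ written in decreasing order, and ``still a $12$'' amounts to the existence of some $q>m$ with $\pi_q>\pi_{m-1}$. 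So the first thing I would prove is the clean criterion: $\pi\in g(A_{n-1})$ iff there is an index $m$ such that $\pi_1,\dots,\pi_{m-2}$ are the top $m-2$ values of $[n]\setminus\{\pi_m\}$ in decreasing order and $\pi_q>\pi_{m-1}$ for some $q>m$. The forward direction is immediate from the definition of $g$; for the converse one takes $\sigma$ to be $\pi$ with position $m$ removed and standardized, and checks directly that $g(\sigma,\pi_m)=\pi$.

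From this criterion I would then deduce the shape of the complement: $\pi\in B_n\setminus g(A_{n-1})$ exactly when, writing $p$ for the length of the longest decreasing prefix of $\pi$ (so $\pi_1>\cdots>\pi_p<\pi_{p+1}$), one has the rigid form
\[
\pi_1\pi_2\cdots\pi_{p+1}=n,\,(n-1),\,\dots,\,(n-p+2),\,(n-p),\,(n-p+1),
\]
that is, the $p+1$ largest values occupy the first $p+1$ positions in this one fixed order, while the suffix $\pi_{p+2}\cdots\pi_n$ is an arbitrary arrangement of $\{1,\dots,n-p-1\}$ that contains a $1\square 2$. That permutations of this form fail to be images of $g$ is a short check against the criterion (the prefix condition can hold only for $m\le p+1$, and then no entry right of position $m$ is large enough to beat $\pi_{m-1}$); the substance is the converse, that every $\pi\in B_n$ not of this form \emph{is} hit by $g$.

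Given Step~2 the count is bookkeeping. For each $p$ with $1\le p\le n-4$ (we need the suffix length $\ge 3$ to house a $1\square 2$) the prefix is determined, and the suffix ranges over all size-$(n-p-1)$ permutations containing a $1\square 2$, of which there are $(n-p-1)!-F_{n-p}$ by Theorem~\ref{th:fibo} (since $|\operatorname{Av}_{n-p-1}(1\square 2)|=F_{n-p}$). Hence $|B_n\setminus g(A_{n-1})|=\sum_{p=1}^{n-4}\bigl[(n-p-1)!-F_{n-p}\bigr]$, and it remains to verify the elementary identity
\[
\sum_{p=1}^{n-4}\bigl[(n-p-1)!-F_{n-p}\bigr]=\sum_{k=3}^{n-2}(F_{n-k+1}-1)\,k(k-2)(k-2)!,
\]
e.g. by reindexing $k=n-p-1$ and inducting on $n$, using $F_{m+1}=F_m+F_{m-1}$ and $k(k-2)(k-2)!=k!-(k-1)!-(k-2)!$. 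As a consistency check one can reach the same total purely by double counting: by Theorems~\ref{th:almostInjective} and~\ref{th:exactlyTwo} every fiber of $g$ has size $1$ or $2$ and the number of size-$2$ fibers is $E:=\sum_{j=3}^{n-1}(j-2)(n-j)(n-j)!$, so $|g(A_{n-1})|=|A_{n-1}|-E=(n!-n)-E$, whence $|B_n\setminus g(A_{n-1})|=|B_n|-|g(A_{n-1})|=(n!-F_{n+1})-(n!-n-E)=n-F_{n+1}+E$, which matches the formula above.

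The main obstacle is the converse half of Step~2: showing that a $\pi\in B_n$ outside the rigid family must admit a working $m$. I would take such a $\pi$, let $p$ be its longest decreasing prefix, and try the natural candidates $m=p+1$ (delete the entry just before the first ascent) and, more generally, the least $m\le p+1$ for which some later entry exceeds $\pi_{m-1}$; a short case analysis on where the large values of $\pi$ sit (whether $\pi_1=n$, whether $n$ occurs at the relevant position $m$, and how $\pi_{p+1}$ compares with $\pi_{p-1}$ and with the entries past position $p+1$) should show that, unless $\pi$ lies in the forbidden shape, one of these choices satisfies both clauses of the Step~1 criterion. The final algebraic identity in Step~3 is routine but a little fiddly, and I would keep it isolated from the combinatorial argument.
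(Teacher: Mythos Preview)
Your plan is correct, and your Step~2 characterization does pin down $B_n\setminus g(A_{n-1})$ exactly, but the decomposition is genuinely different from the paper's. The paper does \emph{not} split $\pi$ at its first ascent; it splits at the position $j$ where the first occurrence of $1\square 2$ begins. From minimality of $j$ one gets that $\pi'=\pi_1\cdots\pi_{j-1}$ uses the top $j-1$ values, avoids $1\square 2$, and contains a (necessarily consecutive) $12$, while $\pi''=\pi_j\cdots\pi_n$ is any arrangement of the bottom $k=n-j+1$ values not beginning with $k$ or with $(k-1)k$. This yields $(F_j-1)$ choices for $\pi'$ and $k!-(k-1)!-(k-2)!=k(k-2)(k-2)!$ for $\pi''$, so the target sum drops out immediately with no algebraic identity to verify.

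Your split at the first ascent is finer: since any $\pi'\in\operatorname{Av}(1\square 2)$ satisfies $\pi'_i>\pi'_{i+2},\pi'_{i+3},\ldots$ for every $i$, its first $p+1$ entries are forced to be the top $p+1$ values in exactly the rigid order you describe. Thus your family and the paper's describe the same set, parametrized by $p$ versus by $j$, and this is why your sum $\sum_{p}[(n-p-1)!-F_{n-p}]$ equals the paper's $\sum_k(F_{n-k+1}-1)k(k-2)(k-2)!$. For the identity itself, a cleaner route than induction is to write $k(k-2)(k-2)!=k!-(k-1)!-(k-2)!$, reindex the three resulting sums, and telescope via $F_{m+1}-F_m-F_{m-1}=0$; both sides reduce to $\sum_{k=3}^{n-2}k!-F_{n+1}+5$. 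Your approach buys a sharper structural description of the exceptional permutations (the prefix is unique); the paper's buys the stated formula without the extra algebra. Your Step~2 converse case analysis does go through---the candidates $m=l+1$ for the least $l$ with $\pi_l\neq n-l+1$, and $m=p+1$ otherwise, always succeed outside the rigid family---though it is a bit longer than ``short.''
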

\begin{proof}
An example of a permutation in $B_{n}$ that cannot be obtained as an output of the function $g$ (i.e., with Algorithm 1) for any input in $A_{n-1}$ is the permutation $45132$. The reason is that before the first occurrence of the distant pattern  $1 \square 2$, there exist an occurrence of the classical pattern $12$ (which is not an occurrence of $1 \square 2$). We want to obtain a formula for all permutations in $B_{n}$ having this property. Consider one such permutation $\pi = \pi_{1}\pi_{2}\cdots \pi_{n}$ and let the first occurrence of $1 \square 2$ be $\pi_{j}\pi_{j+v}$ for some $j\geq 1$, $v\geq 2$, and $j+v \leq n$. Since, this is the first such occurrence, observe that $\pi_{i}>\pi_{j+d}$, for any $i<j$ and $d\geq 0$. Otherwise we would have another occurrence of $1 \square 2$, preceding $\pi_{j}\pi_{j+v}$. Thus $\pi' = \pi_{1}\cdots \pi_{j-1}$ must be a permutation of the $j-1$ numbers $n-j+2, \ldots , n$ and $\pi'' = \pi_{j}\ldots \pi_{n}$ is simply a permutation of $1, \cdots , (n-j+1)$ for which $\pi_{j} < \pi_{j+v}$ for some $v = 1 \cdots n-j$. This means that $\pi'$ avoids $1 \square 2$, but contains the classical pattern 12. Therefore the number of possibilities for $\pi'$ is $|\operatorname{Av}_{j-1}(1 \square 2)| - |\operatorname{Av}_{j-1}(12)| = F_{j}-1$ since $|\operatorname{Av}_{j-1}(1 \square 2)| = F_{j}$ (Theorem \ref{th:fibo}), $|\operatorname{Av}_{j-1}(12)| = 1$ and $A_{j-1}(12) \subset A_{j-1}(1 \square 2)$. Now, let us denote $k = n-j+1$, for clarity. For $\pi''$, we can see that it could be any $k$-permutation except that it could not start with $k$ or with $(k-1)k$ since if this is the case, then $\pi''$ will not start with an occurrence of the $1 \square 2$ pattern. The latter means that the possible values for $\pi''$ are exactly $k!-(k-1)!-(k-2)! = k(k-2)(k-2)!$. Summing over $k$, we obtain the given formula.
\end{proof}

Now, we are ready to derive the recurrence formula that we want. Note that $|A_{n-1}| = |(S_{n-1}\setminus \operatorname{Av}(12))\times [n]| = ((n-1)!-1)\cdot n = n!-n$ gives the number of permutations in $B_{n}$ that are the image of the map $g$ for exactly one tuple in $A_{n-1}$. Theorems \ref{th:exactlyTwo} and \ref{th:missing} give the number of permutations being the image of $g$ for $2$ and $0$ tuples in $A_{n-1}$, respectively. We also know that $|B_{n}| =  |S_{n} \setminus \operatorname{Av}_{n}(1 \square 2)| = n! - F_{n+1}$. Thus using inclusion-exclusion we have:
\begin{equation*}
    (n!-F_{n+1}) - \sum\limits_{k=3}^{n-2} (F_{n-k+1}-1)k(k-2)((k-2)!) = (n!-n) - \sum\limits_{j=3}^{n-1} (j-2)(n-j)(n-j)!.
\end{equation*}

After simplifying, we obtain the following recurrence formula for the Fibonacci numbers and respectively for the number of permutations avoiding the distant pattern $1 \square 2$ (or $2 \square 1$):
\begin{equation}\label{eq:fiboRec}
    |\operatorname{Av}_{n}(1\square 2)| = F_{n+1} = n + \sum_{k=1}^{n-3} (n-(k+2)F_{n-(k+1)})\cdot k\cdot k!.
\end{equation}
% The above formula is a new recurrence for the Fibonacci numbers and one may try to use the same approach in order to obtain recurrence formulas for the number of permutations avoiding distant patterns of greater size.

\section{Classical DPs of size 3} \label{sec:len3}
As we can infer from Theorem \ref{th:simultAv}, finding a closed formula for the avoidance set of a distant pattern becomes more complicated as its size increases, because the number of classical patterns that must be simultaneously avoided increases, as well. In this section, we describe some already established results on the DPs of size $3$ with one square ($xy\square z$) and two squares ($x\square y\square z$). Then, we discuss an approach that we have used to obtain the generating function for $|\operatorname{Av}_{n}(1\square 3\square 2)|$, which represents one of the two different Wilf-equivalent classes for patterns of the latter kind.  

\subsection{Patterns of the kind $xy\square z$}
% \subsection{Patterns with 1 square}
\label{sec:polygons}
Consider the patterns $xy\square z$ and $x\square yz$, for some permutation $xyz\in S_{3}$. The thesis of Firro \cite{thesis} and two related works \cite{MansourPolygons, MansourFuncEq} give the formula
\begin{equation}\label{eq:firro123}
    |\operatorname{Av}_{n}(12 \square 3)| = \sum\limits_{k \geq 0} \frac{1}{n-k}\binom{2n-2k}{n-1-2k}\binom{n-k}{k}.
\end{equation}
The same thesis gives two bijections between $12 \square 3$-avoiding permutations and odd-dissections of a given $(n+2)$-gon, which are dissections with non-crossing diagonals so that no $2m$-gons ($m > 1$) appear \cite[A049124]{OEIS}. In fact, it turns out that this is cardinality of the avoidance set for any pattern of the kind $xy \square z$ or $x \square yz$ \cite{thesis}. We know that all classical patterns in $S_{3}$ are avoided by the same number of permutations, namely the Catalan numbers. One might suspect that whenever two classical patterns $p,q\in S_{k}$ are Wilf-equivalent, then inserting a square at the same place in $p$ and $q$ will produce two Wilf-equivalent distant patterns. The computer simulations shows that the former seems to be true for the Wilf-equivalent patterns $\{1234,1243,2143\}$. We have formulated this conjecture in Section \ref{sec:open}.
%(see conj. \ref{conj:PutSquare} there).

It was shown in \cite{thesis} that if $xyz\in S_{3}$, then inserting a square between $x$ and $y$ or between $y$ and $z$ always gives us two Wilf-equivalent patterns. It is worth noting that we do not have a similar fact when considering patterns of bigger size. For example, $|\operatorname{Av}_{7}(1 \square 234)| = 3612 \neq 3614 = |\operatorname{Av}_{7}(12 \square 34)|$.

\subsection{Patterns of the kind $x\square y\square z$}
The inverse and the complement map give us at most two Wilf-equivalent permutation classes : $\{\operatorname{dist}_{1}(p)\mid p = 132,231,213,312\}$ and $\{\operatorname{dist}_{1}(p)\mid p = 123,312\}$. Unlike the case of classical patterns in which these are, in fact, one class \cite{simion}, here, these classes are different.
 \begin{theorem} (\cite{hopkins})\label{th:123IsLargest}
 For $n > 5$, 
 \begin{equation}
 |\operatorname{Av}_{n}(\operatorname{dist}_{1}(123))| > |\operatorname{Av}_{n}(\operatorname{dist}_{1}(132))|.
 \end{equation}
 \end{theorem}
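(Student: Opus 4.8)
The two patterns are $\operatorname{dist}_1(123)=1\square 2\square 3$ and $\operatorname{dist}_1(132)=1\square 3\square 2$; by the reverse and complement maps each already spans its whole Wilf class, so there is no freedom to exploit there. For $n\le 5$ a direct check gives equality: both cardinalities are $n!$ for $n\le 4$ (no triple of positions $i<j<k$ with $j\ge i+2$, $k\ge j+2$ fits inside $[n]$), and both equal $100$ for $n=5$ (the only such ``wide'' triple is $(1,3,5)$, and exactly $20=5!/3!$ permutations realize a prescribed pattern on it), so the content of the statement is the strict inequality for $n\ge 6$. The plan is to pass to complements. By Theorem~\ref{th:simultAv}, $|\operatorname{Av}_n(1\square 2\square 3)|=n!-U_n^{123}$ and $|\operatorname{Av}_n(1\square 3\square 2)|=n!-U_n^{132}$, where $U_n^{p}$ counts the $\pi\in S_n$ admitting a wide triple $i<j<k$ (i.e.\ $j\ge i+2$, $k\ge j+2$) whose entries flatten to $p$; thus the claim is $U_n^{132}>U_n^{123}$ for $n\ge 6$. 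Inclusion--exclusion over the set of wide triples writes each $U_n^{p}$ as an alternating sum whose generic term, for a family $\mathcal F$ of wide triples, counts the permutations in which every triple of $\mathcal F$ realizes $p$; this term factors as (number of value-choices) $\times$ (number of linear extensions of the poset on $\bigcup\mathcal F$ generated by imposing $p$ on each triple) $\times$ (a factorial for the free positions), and the key point is that the poset for $p=123$ differs from the one for $p=132$. For $n=6$ there are four wide triples and a direct (if tedious) run of this inclusion--exclusion gives $U_6^{132}-U_6^{123}=4$, i.e.\ $|\operatorname{Av}_6(1\square 2\square 3)|=|\operatorname{Av}_6(1\square 3\square 2)|+4$ --- this both confirms the inequality at the first nontrivial value and indicates the size of gap to expect.

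For general $n$ this inclusion--exclusion is not a finite computation, so the real task is to package it into generating functions $A(x)=\sum_{n\ge 0}|\operatorname{Av}_n(1\square 2\square 3)|x^n$ and $B(x)=\sum_{n\ge 0}|\operatorname{Av}_n(1\square 3\square 2)|x^n$ and then compare coefficients. I would obtain $B(x)$ by the block-decomposition technique of Mansour--Vainshtein, which the paper already uses for $\operatorname{Av}_n(1\square 3\square 2)$, and $A(x)$ by the same technique applied to $1\square 2\square 3$; a useful structural seed in both cases is that if $n$ sits at position $m$ in a $1\square 2\square 3$-avoider then $\pi_1\cdots\pi_{m-2}$ must avoid $1\square 2$ and hence (Theorem~\ref{th:fibo}) has one of $F_{m-1}$ order types, with an analogous Fibonacci-flavoured seed for $1\square 3\square 2$ obtained by looking at the position of $1$. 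Alternatively one can invoke the poset pattern-avoidance machinery of Hopkins--Weiler \cite{hopkins}, which is exactly the route by which the stated corollary is derived. Once $A$ and $B$ are available, the theorem reduces to the assertion that the coefficients of $A(x)-B(x)$ are strictly positive from the $x^6$ term on, and I would establish this non-alternating positivity either by giving a combinatorial model for $a_n-b_n$ as the size of a manifestly nonempty set, or by extracting a linear recurrence for $c_n:=a_n-b_n$ whose form, together with $c_n=0$ for $n\le 5$ and $c_6=4$, forces $c_n>0$ for every $n\ge 6$.

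A more appealing but more delicate alternative is a direct injection. Since $|\operatorname{Av}_n(1\square 2\square 3)|-|\operatorname{Av}_n(1\square 3\square 2)|$ equals $|\operatorname{Av}_n(1\square 2\square 3)\setminus\operatorname{Av}_n(1\square 3\square 2)|-|\operatorname{Av}_n(1\square 3\square 2)\setminus\operatorname{Av}_n(1\square 2\square 3)|$, it is enough to injectively send each permutation that avoids $1\square 3\square 2$ but contains $1\square 2\square 3$ (for instance the identity $12\cdots n$, $n\ge 5$) to one that avoids $1\square 2\square 3$ but contains $1\square 3\square 2$, missing at least one element (at least four at $n=6$, by the computation above). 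A natural attempt is to fix a canonical wide $123$-occurrence $(i,j,k)$ of the input and swap the values in positions $j$ and $k$, which turns that occurrence into a wide $132$. The hard part --- and it is the main obstacle in every route --- is precisely that the $\square$'s stand for \emph{unbounded} gaps, so there is no finite transfer matrix to lean on: the enumerations genuinely require tracking running extrema or solving functional equations, and the swap above must be arranged so that it does not create any new wide $123$ (nor place the resulting wide $132$ somewhere that destroys injectivity), which is exactly what defeats the naive versions. I expect the generating-function comparison to be the route that actually closes the argument.
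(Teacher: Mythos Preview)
The paper does not give an independent proof of this theorem: it is stated with attribution to Hopkins--Weiler and immediately explained as the special case $r=1$ of their general inequality $|\operatorname{Av}_{P,n}(132)|\le |\operatorname{Av}_{P,n}(123)|$ for pattern avoidance in permutations of a poset $P$, together with their classification of the posets for which equality holds. You do mention this route, but only as an afterthought; it is in fact the entire argument the paper relies on.

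Your own proposed routes, by contrast, are outlines rather than proofs, and the one you single out as ``the route that actually closes the argument'' --- comparing the generating functions $A(x)$ and $B(x)$ coefficientwise --- runs into a wall you do not acknowledge. The very next subsection of the paper is devoted to computing $B(x)=G(x)$ for $1\square 3\square 2$: the block-decomposition method you invoke does not yield a closed form but only an algebraic equation $P(x,G(x))=0$ with $170$ terms and top monomial $x^{27}G^{12}$, whose derivation is deferred to a separate article. No analogue for $A(x)$ is given anywhere. Proving a strict coefficientwise inequality between two power series defined implicitly by equations of that size is not a matter of ``extracting a linear recurrence''; neither sequence satisfies a linear recurrence with constant coefficients, and no mechanism is offered for why $c_n=a_n-b_n$ should stay positive once $c_6=4$. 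The inclusion--exclusion over wide triples and the value-swap injection are likewise left at the stage where, as you yourself note, the real difficulty begins. So the proposal identifies the correct small cases and the right difference at $n=6$, but none of the three sketched approaches is carried far enough to constitute a proof, and the only approach that actually works here --- the poset machinery of \cite{hopkins} --- is the one you set aside.
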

 The theorem above is a special case of a result of Hopkins and Weiler \cite[Theorem 3]{hopkins}. In that work they extend the result of Simion and Schmidt that $|\operatorname{Av}_{n}(123)| = |\operatorname{Av}_{n}(132)|$  from permutations on a totally ordered set to a similar result for pattern avoidance in permutations on partially ordered sets. In particular, they show that $|\operatorname{Av}_{P,n}(132)| \leq |\operatorname{Av}_{P,n}(123)|$ for any poset $P$, where $\operatorname{Av}_{P,n}(q)$ is the number of $n$-permutations on the poset $P$ avoiding the pattern $q$. Furthermore, they classify the posets for which equality holds. Here, we state the corollary of their result generalizing Theorem \ref{th:123IsLargest}, as formulated by the authors.
 \begin{theorem} (\cite{hopkins})
 For $r \geq 0$ and $n \geq 1$, we have
 \begin{equation}
|\operatorname{Av}_{n}(\operatorname{dist}_{r}(123))| \geq |\operatorname{Av}_{n}(\operatorname{dist}_{r}(132))|,
\end{equation} 
 with strict inequality if and only if $r \geq 1$ and $n \geq 2r+4$.
 \end{theorem}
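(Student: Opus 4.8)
The plan is to read $\operatorname{dist}_r$-avoidance of a $3$-letter classical pattern as ordinary pattern avoidance in a permutation of a suitable finite poset, and then invoke the Hopkins--Weiler machinery directly. Define the poset $P_{n,r}$ on the ground set $[n]$ by declaring $i\prec j$ exactly when $j\ge i+r+1$. This is a strict partial order: if $i\prec j\prec k$, then $k-i\ge (r+1)+(r+1)=2r+2>r$, so $i\prec k$ (in fact $P_{n,r}$ is a semiorder). The key is the following dictionary: for $q\in S_3$, an occurrence of $\operatorname{dist}_r(q)$ in $\pi=\pi_1\cdots\pi_n$ is a triple of positions $x_1<x_2<x_3$ with $x_2-x_1>r$, $x_3-x_2>r$, and $\textsf{fl}(\pi_{x_1}\pi_{x_2}\pi_{x_3})=q$; but such position triples are exactly the $3$-element chains $x_1\prec x_2\prec x_3$ of $P_{n,r}$. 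Hence $\pi\in\operatorname{Av}_n(\operatorname{dist}_r(q))$ if and only if $\pi$, viewed as a permutation of the poset $P_{n,r}$, avoids the classical pattern $q$ in the sense of \cite{hopkins}; in particular $|\operatorname{Av}_n(\operatorname{dist}_r(q))|=|\operatorname{Av}_{P_{n,r},n}(q)|$.

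With this dictionary the inequality is immediate: $|\operatorname{Av}_n(\operatorname{dist}_r(132))|\le|\operatorname{Av}_n(\operatorname{dist}_r(123))|$ is exactly \cite[Theorem 3]{hopkins}, which asserts $|\operatorname{Av}_{P,n}(132)|\le|\operatorname{Av}_{P,n}(123)|$ for every finite poset $P$, specialized to $P=P_{n,r}$.

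It remains to decide when equality holds, i.e.\ to locate $P_{n,r}$ with respect to the Hopkins--Weiler classification of the posets on which the two counts agree. Two regimes are easy. If $r=0$ then $P_{n,0}$ is the $n$-element chain, and the statement reduces to $|\operatorname{Av}_n(123)|=|\operatorname{Av}_n(132)|$, valid for all $n$ \cite{simion} --- this is why strictness needs $r\ge 1$. If $n\le 2r+2$ then $P_{n,r}$ has no $3$-chain (a $3$-chain forces a top element $\ge 2r+3$), so $\operatorname{Av}_n(\operatorname{dist}_r(q))=S_n$ for every $q\in S_3$ and equality is trivial; and if $n=2r+3$ the poset has the single $3$-chain $(1,\,r+2,\,2r+3)$, whence a direct count gives, for both $q=123$ and $q=132$,
\begin{equation*}
  |\operatorname{Av}_{n}(\operatorname{dist}_r(q))| \;=\; n!-\binom{n}{3}(n-3)! \;=\; \tfrac{5}{6}\,n!,
\end{equation*}
because for each $3$-subset of values there is exactly one arrangement at the three chain positions realizing $q$. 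Finally, for $r\ge 1$ and $n\ge 2r+4$ one invokes the Hopkins--Weiler equality criterion and verifies that $P_{n,r}$ fails it: precisely when $n\ge 2r+4$ the poset contains two distinct $3$-chains sharing a common bottom $2$-chain, namely $(1,\,r+2,\,2r+3)$ and $(1,\,r+2,\,2r+4)$ (with $(1,\,r+3,\,2r+4)$ and $(2,\,r+3,\,2r+4)$ also present), whereas for $n\le 2r+3$ no such overlapping pair exists; this identifies the threshold and yields strict inequality exactly when $r\ge 1$ and $n\ge 2r+4$.

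The only step carrying real content is the last one: pinning down the precise local configuration that the Hopkins--Weiler equality characterization forbids, checking that $P_{n,r}$ acquires it exactly at $n=2r+4$ when $r\ge 1$, and confirming its absence below that threshold so the bound is sharp. The poset dictionary, the transitivity check, and the $r=0$ and $n\le 2r+3$ counts are routine bookkeeping. An alternative that sidesteps quoting the classification for the strict direction would be to transport the injection $\operatorname{Av}_{P,n}(132)\hookrightarrow\operatorname{Av}_{P,n}(123)$ constructed in \cite{hopkins} to $P=P_{n,r}$ and exhibit, for $n\ge 2r+4$, a $\operatorname{dist}_r(123)$-avoider lying outside its image; this is more laborious but self-contained.
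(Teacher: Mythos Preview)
Your approach matches the paper's: the paper does not prove this theorem itself but presents it as a corollary of Hopkins--Weiler's poset-permutation inequality $|\operatorname{Av}_{P,n}(132)|\le|\operatorname{Av}_{P,n}(123)|$ together with their classification of the equality cases, which is exactly the route you take after setting up the dictionary via the poset $P_{n,r}$. Your treatment of the boundary cases ($r=0$ via Simion--Schmidt, $n\le 2r+3$ by direct count) is more explicit than the paper's brief remark on $n=2r+3$, and the soft spot you flag---identifying the precise Hopkins--Weiler equality criterion and checking it against $P_{n,r}$ for $n\ge 2r+4$---is a step the paper likewise defers entirely to \cite{hopkins}.
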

 Note that in the case $n=2r+3$, $|\operatorname{Av}_{2r+3}(\operatorname{dist}_{r}(123))| = |\operatorname{Av}_{2r+3}(\operatorname{dist}_{r}(132))|$ since there is only one triple of positions where each of these two patterns can occur in a $(2r+3)$-permutation, namely the positions $1,r+2$ and $2r+3$. So for each such occurrence, we can exchange the elements at positions $r+2$ and $2r+3$ to get an occurrence of the other pattern. A similar statement about consecutive patterns was first proved in \cite{Elizalde2003} with a simple injection. It states that $|\operatorname{Av}_{n}(\underline{123})| > |\operatorname{Av}_{n}(\underline{132})|$ for every $n \geq 4$.
 The listed facts imply that the monotonic pattern $123$ is avoided more frequently than $132$ when we have two gaps of size exactly $0$ between the letters in each occurrence of the two patterns, or when the minimal constraint for each  gap is some fixed positive number. However, when patterns with all possible gap sizes must be avoided, we have an equality since $|\operatorname{Av}_{n}(123)| = |\operatorname{Av}_{n}(132)|$. We address this surprising fact in the next section.
 
 Along those lines is another work of Elizalde \cite{Elizalde2012} on consecutive patterns, where he generalizes \cite{Elizalde2003} by proving that the number of permutations avoiding the monotone consecutive pattern $\underline{12 \cdots m}$ is asymptotically larger than the number of permutations avoiding any other consecutive pattern of size $m$. He also proved there that $|\operatorname{Av}_{n}(\underline{12 \dots  (m-2)m(m-1)})|$ is asymptotically smaller than the number of permutations avoiding any other consecutive pattern of the same size. Similar conjectures can be formulated for distant patterns (see Section \ref{sec:open}). 
 
 \subsubsection{The pattern $1 \square 3 \square 2$}
In this subsection, we will roughly describe an approach that one can use to find the generating function $G(x) = \sum\limits_{n\geq 0}^{}|\operatorname{Av}_{n}(q)|x^{n}$, where $q=1\square 3\square 2 = \operatorname{dist}_{1}(132)$. All the details about this proof and the technique, that we use, will be described in a separate, forthcoming, article. Here, we will sketch that proof. To do that, we will need to define the following sets of permutations:
 \begin{center}
     $\mathbb{H}_1 \coloneqq \{\pi \mid \pi \in \operatorname{Av}(q)$, $|\pi|\geq 1$ and $\pi$ does not have an occurrence of $1\square\underline{32}$ ending at the last position of $\pi$$\}$, and
 \end{center}
 \begin{center}
     $\mathbb{H}_2 \coloneqq \{\pi \mid \pi \in \operatorname{Av}(q)$, $|\pi|\geq 1$ and $\pi$ does not have an occurrence of $\underline{13}\square2$ beginning at the first position of $\pi$ $\}$.
 \end{center}
 Let us also denote the corresponding generating functions with 
 \begin{center}
     $H_{i}(x) = \sum\limits_{k=1}^{\infty}h_{i}(k)x^{k}$,
 \end{center} 
 where $h_{i}(n)$ is the number of permutations of size $n$ in $\mathbb{H}_i$ ($i=1,2$). Now, we can describe a useful decomposition for the permutations in $\operatorname{Av}_{n}(q)$ which is similar, but more complicated, to the one given in \cite{thesis} for the permutations in $\operatorname{Av}_{n}(13\square 2)$. 
 
 \begin{theorem} \label{th:132decompose}
 For all $n \geq 1$, $\pi = \alpha n \beta \in \operatorname{Av}_{n}(q)$ if and only if:
 \begin{enumerate}[label=(\roman*)]
     
     \item $\alpha > \beta$, $\alpha,\beta \in \operatorname{Av}(q)$
     
     \item $\alpha \ngtr \beta$, but $\alpha' > \beta'$, where $\alpha = \alpha't_{1}$ and $\beta = t_{2}\beta'$ for some $t_{1},t_{2}\in [n-1]$. and one of the following holds:
     \begin{enumerate}[label=\arabic*.]
         \item $t_{1}>\beta'$, $t_{2}<\alpha'$, $t_{1}<t_{2}$ and $\alpha',\beta' \in \operatorname{Av}(q)$
         
         \item $t_{1}>\beta'$, $t_{2}\nless\alpha'$, $\beta'\in \operatorname{Av}(q)$ and $\sigma = \alpha't_{1}t_{2}\in\mathbb{H}_1$ with $t_{2}$ not being the smallest element in $\sigma$ and not being the second smallest, after $t_{1}$.
         
         \item $t_{1}\ngtr\beta'$, $t_{2}<\alpha'$, $\alpha'\in \operatorname{Av}(q)$ and $\sigma = t_{1}t_{2}\beta'\in\mathbb{H}_2$ with $t_{1}$ not being the biggest element in $\sigma$ and not being the second biggest, after $t_{2}$.
         
         \item $t_{1}\ngtr\beta'$, $t_{2}\nless\alpha'$, $\sigma_{1} = \alpha't_{2}\in \mathbb{H}_1$ with $t_{2}$ not being the smallest element in $\sigma_{1}$ and $\beta' = x\beta''$, where $x>t_{1}>\beta''$ and $\beta''\in \operatorname{Av}(q)$.
         
     \end{enumerate}
 \end{enumerate}
 \end{theorem}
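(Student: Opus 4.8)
I would prove the equivalence by conditioning on the position of the largest entry $n$, which is exactly what the factorization $\pi=\alpha n\beta$ records, and then sorting every occurrence of $q=1\square 3\square 2$ in $\pi$ by how its three letters meet the three consecutive positions occupied by the last letter $t_{1}$ of $\alpha$, the letter $n$, and the first letter $t_{2}$ of $\beta$ (with $\alpha=\alpha' t_{1}$ and $\beta=t_{2}\beta'$). Since $n$ is maximal, any occurrence using $n$ must use it as the middle (``$3$'') letter, with the ``$1$'' at least one slot to the left of $n$ and the ``$2$'' at least one slot to its right; hence such an occurrence exists precisely when some entry of $\alpha'$ is smaller than some entry of $\beta'$, i.e.\ exactly when $\alpha'\ngtr\beta'$. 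This forces the top-level split: either $\alpha>\beta$, which is part~(i), or $\alpha\ngtr\beta$ while still $\alpha'>\beta'$, which is the hypothesis of part~(ii). A short separate check handles part~(i): when $\alpha>\beta$ no occurrence can use entries from both blocks (if its ``$1$'' lies in $\alpha$ then its ``$2$'' and ``$3$'', being larger, lie in $\alpha$ too, and if its ``$1$'' lies in $\beta$ the whole occurrence does), so there $\pi$ avoids $q$ iff $\alpha$ and $\beta$ do.

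For part~(ii) I would first observe that, since $\alpha'>\beta'$ but $\alpha\ngtr\beta$, every pair $a\in\alpha$, $b\in\beta$ with $a<b$ has $a=t_{1}$ or $b=t_{2}$. The four subcases then arise by splitting on the two independent conditions $t_{1}>\beta'$ versus $t_{1}\ngtr\beta'$ and $t_{2}<\alpha'$ versus $t_{2}\ngtr\alpha'$, with the order of $t_{1}$ and $t_{2}$ distinguishing the two ``small violation'' possibilities inside subcase~1. Within each subcase one rewrites ``$\pi$ avoids $q$'' as conditions on $\alpha'$, $\beta'$, $t_{1}$, $t_{2}$ by running through the possible hosts of the ``$1$'', ``$3$'', ``$2$'' of a hypothetical occurrence. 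The mechanism to track throughout is that $t_{1}$, $n$, $t_{2}$ occupy consecutive positions, so an occurrence of $q$ using two of these three as its ``$1,3$'' or ``$3,2$'' is rescued by the tightness of a gap; these borderline almost-occurrences are exactly what the auxiliary conditions encode. Membership of $\alpha' t_{1}t_{2}$ in $\mathbb{H}_{1}$ (subcase~2), of $t_{1}t_{2}\beta'$ in $\mathbb{H}_{2}$ (subcase~3), and of $\alpha' t_{2}$ in $\mathbb{H}_{1}$ (subcase~4) forbids an occurrence of $1\square\underline{32}$ or $\underline{13}\square2$ riding on the adjacent pair $t_{1}t_{2}$, while the ``$t_{2}$ not smallest / not second smallest after $t_{1}$'' clauses remove the remaining borderline occurrences in which $t_{2}$ would play the ``$1$'' or the ``$2$''. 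Subcase~3 is handled by the same analysis with the roles of left and right, of $\alpha'$ and $\beta'$, and of $\mathbb{H}_{1}$ and $\mathbb{H}_{2}$ interchanged, and subcase~4 covers the residual situation in which $t_{1}$ and $t_{2}$ are simultaneously ``small'', where avoidance forces $\beta'$ to begin with an entry above $t_{1}$ followed by a block below $t_{1}$.

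It remains to verify two global points. First, the four subcases of~(ii) must be pairwise disjoint, so that the stated equivalence yields a genuine partition of $\operatorname{Av}_{n}(q)$ (this is what makes the decomposition usable for the generating-function computation in the forthcoming article). Second, and more substantively, one must check necessity: given $\pi\in\operatorname{Av}_{n}(q)$, each of the rigid conditions ($\sigma\in\mathbb{H}_{i}$, the rank restrictions on $t_{2}$ or $t_{1}$, the factorization $\beta'=x\beta''$ with $x>t_{1}>\beta''$) is actually forced, by exhibiting from any violation an honest occurrence of $q$ in $\pi$. The delicate point is that, passing between $\pi$ and the truncated words $\alpha' t_{1}t_{2}$, $t_{1}t_{2}\beta'$, $\alpha' t_{2}$, deleting $n$ (and sometimes $t_{1}$) shifts positions by $1$ or $2$, so one must confirm that the gaps of the occurrence produced are genuinely $\ge 1$ after these shifts; this is precisely why the vincular patterns $1\square\underline{32}$ and $\underline{13}\square2$, rather than $q$ itself, appear in the definitions of $\mathbb{H}_{1}$ and $\mathbb{H}_{2}$.

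\textbf{Main obstacle.} The real difficulty is not any single subcase but making the bookkeeping airtight: showing simultaneously that the four subcases are exhaustive and mutually exclusive and that each auxiliary condition matches exactly one family of forbidden occurrences, with the borderline ``tight'' occurrences on the block $t_{1}nt_{2}$ correctly assigned and none counted twice. I would organize this by fixing once and for all a classification of occurrences of $q$ in $\pi=\alpha n\beta$ according to which of the blocks $\alpha'$, $\{t_{1}\}$, $\{n\}$, $\{t_{2}\}$, $\beta'$ host the ``$1$'', the ``$3$'' and the ``$2$'', determining for each resulting type the precise inequality among $\alpha',\beta',t_{1},t_{2}$ and the precise position constraint needed to exclude it, and only then reading off the subcases; the list of types is finite but must be compiled with care about the three consecutive positions.
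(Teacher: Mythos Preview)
The paper explicitly omits the proof of this theorem (``In this paper, we omit the proof of the fact above''), deferring it to a forthcoming article, so there is no in-paper argument to compare against. Your plan is the natural one and almost certainly the intended one: factor at $n$, observe that $n$ can only serve as the ``3'' and that the two $\square$-gaps then force $\alpha'>\beta'$, and thereafter classify every potential occurrence of $1\square 3\square 2$ by which of the five blocks $\alpha',\{t_1\},\{n\},\{t_2\},\beta'$ host its three letters. Your identification of why the vincular conditions defining $\mathbb{H}_1$ and $\mathbb{H}_2$ (rather than $q$ itself) appear---namely, that deleting $n$ collapses the gap between $t_1$ and $t_2$ to an adjacency---is exactly the point.

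Two small corrections to your narrative. First, in subcase~1 the condition $t_1<t_2$ is not an extra choice ``inside'' the subcase: once $t_1>\beta'$, $t_2<\alpha'$, and $\alpha'>\beta'$ all hold, the only way $\alpha\ngtr\beta$ can happen is via the single pair $(t_1,t_2)$, so $t_1<t_2$ is forced (and the alternative $t_1>t_2$ lands you back in case~(i)). Second, your description of subcase~4 as ``$t_1$ and $t_2$ simultaneously small'' is backwards for $t_2$: the hypothesis $t_2\nless\alpha'$ says $t_2$ exceeds some entry of $\alpha'$, and combined with $\alpha'>\beta'$ this gives $t_2>\beta'$, which is precisely what makes $t_1,t_2,y$ (for any $y\in\beta'$ with $y>t_1$ not in the first slot of $\beta'$) a genuine occurrence of $q$ and forces the factorization $\beta'=x\beta''$ with $x>t_1>\beta''$. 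Neither slip affects the soundness of your overall strategy.
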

In this paper, we omit the proof of the fact above. The described decomposition gives us the next result almost directly
\begin{theorem}\label{th:G}
\begin{equation}\label{eq:G}
\begin{split}
G(x) = 1 + G(x)(xH_{1}(x) + xH_{2}(x) + x^{3}H_{1}(x)) + G^{2}(x)(x-2x^{2}-x^{3}-x^{4}).
\end{split} 
\end{equation}
 \end{theorem}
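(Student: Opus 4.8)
The plan is to read the decomposition of Theorem~\ref{th:132decompose} as a disjoint and exhaustive classification of the nonempty members of $\operatorname{Av}(q)$ and to turn it, family by family, into an identity for $G(x)$. First I would record the (routine) disjointness: case (i) is separated from all of case (ii) by ``$\alpha>\beta$'' versus ``$\alpha\ngtr\beta$'', while inside (ii) the four subcases realize the four possibilities for whether $t_1>\beta'$ and whether $t_2<\alpha'$. Thus $G(x)=1+\sum(\text{generating functions of the five families})$, the $1$ accounting for the empty permutation.

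\emph{Bulk contributions.} In case (i) the blocks $\alpha$ and $\beta$ range freely over $\operatorname{Av}(q)$, and the relation $\alpha>\beta$ forces their value sets (the top $|\alpha|$ and the bottom $|\beta|$ values), so inserting the maximum $n$ between them gives $x\,G(x)^2$. In case (ii).1 the three distinguished letters obey the forced chain $\beta'<t_1<t_2<\alpha'<n$, so the permutation is determined by $\alpha',\beta'\in\operatorname{Av}(q)$, contributing $x^3G(x)^2$. In case (ii).2 the shape is a permutation $\sigma=\alpha't_1t_2\in\mathbb{H}_1$ together with a free block $\beta'\in\operatorname{Av}(q)$ (whose value set is forced to the bottom), and the maximum is reinserted at the one position prescribed by $\sigma$; disregarding the side conditions this family has generating function $x\,H_1(x)G(x)$. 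Case (ii).4 is analogous, now with $\sigma_1=\alpha't_2\in\mathbb{H}_1$, a free block $\beta''\in\operatorname{Av}(q)$, and three additional forced letters, giving $x^3H_1(x)G(x)$. Case (ii).3 is the left--right mirror of (ii).2, with a shape in $\mathbb{H}_2$ and free block $\alpha'\in\operatorname{Av}(q)$, contributing $x\,H_2(x)G(x)$ before corrections. Together the bulk terms assemble into the skeleton $1+xG^2+x^3G^2+x\,H_1G+x^3H_1G+x\,H_2G$.

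\emph{Corrections, and the main obstacle.} The side conditions have been ignored so far, and reinstating them is the delicate part of the argument, and the source of the negative monomials. In (ii).2 one must remove the configurations in which $t_2$ is the smallest element of $\sigma$ (or its second smallest while $t_1$ is the smallest); in (ii).3 those in which $t_1$ is the largest element of $\sigma$ (or its second largest while $t_2$ is the largest); and in (ii).4 those in which $t_2$ is the smallest element of $\sigma_1$. The key lemma to prove is that in each of these degenerate cases, deleting the offending (near-)extremal letter turns $\sigma$ (respectively $\sigma_1$) into an \emph{unrestricted} element of $\operatorname{Av}(q)$: a minimum sitting at the end of a word can neither play the role of the ``$1$'' in a $1\square\underline{32}$-occurrence nor create an occurrence of $q$ when appended, and a short finite check disposes of the ``second smallest'' refinement; the statements for maxima and $\underline{13}\square2$ are symmetric. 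Consequently each degenerate subfamily has generating function a monomial in $x$ times $G(x)$, and multiplying by the $G(x)$ of the accompanying free block turns it into a monomial times $G(x)^2$. Tallying these monomials (two subfamilies from (ii).2, two from (ii).3, one from (ii).4) yields corrections summing to $-(2x^2+2x^3+x^4)G(x)^2$. Adding the skeleton and the corrections and using $(x+x^3)-(2x^2+2x^3+x^4)=x-2x^2-x^3-x^4$ gives \eqref{eq:G}. I expect the bookkeeping here to be the hardest part --- in particular, keeping the families genuinely disjoint, checking that the reinsertion of $n$ and of the border letters is truly forced (so that each family is an honest product of generating functions), and verifying that the ``second smallest''/``second largest'' degenerations collapse as cleanly as the plain extremal ones.
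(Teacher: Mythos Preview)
Your proposal is correct and is precisely the approach the paper intends: the paper omits the proof of Theorem~\ref{th:G} entirely, stating only that ``the described decomposition gives us the next result almost directly,'' and your argument is exactly that direct translation of Theorem~\ref{th:132decompose} into generating-function identities. One clarification that would tighten the write-up: the side conditions in cases (ii).2--(ii).4 are not additional constraints layered on top of ``$t_2\nless\alpha'$'' (resp.\ ``$t_1\ngtr\beta'$'') but are \emph{equivalent} reformulations of them---for instance, in (ii).2, $t_2<\alpha'$ holds iff $t_2$ is the smallest of $\sigma$ or ($t_1$ is the smallest and $t_2$ the second smallest)---so your ``bulk minus corrections'' scheme is exactly removing the $\sigma$'s that violate $t_2\nless\alpha'$, and there is no separate overcount to worry about. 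With that observation the five correction terms $-(x^2+x^3)G^2$, $-(x^2+x^3)G^2$, $-x^4G^2$ fall out as you describe, and the arithmetic $(x+x^3)-(2x^2+2x^3+x^4)=x-2x^2-x^3-x^4$ closes the argument.
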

In order to obtain $G(x)$, we first express $H_{1}(x)$ as a function of $H_{2}(x)$  and $G(x)$. Then we express $H_{2}(x)$ as a function of $G(x)$ using the block-decomposition method \cite{MansourVain} and an additional fact. These are the main ingredients of our approach. Extensive case analysis and inclusion-exclusion arguments are additionally used. As a result, we obtain a system of two equations each of which is a polynomial of $x$, $G(x)$ and $H_{2}(x)$. We eliminate $H_{2}(x)$ to obtain an equation $P(x,G(x)) = 0$, where $P$ is a polynomial of $G(x)$ with coefficients that are polynomials of $x$. $P$ has $170$ terms with the term of highest total degree being $x^{27}G^{12}$. One could use a generalization of the Lagrange inversion formula discussed in the work of Baderier and Drmota \cite{banderier} to get a closed form expression for the coefficients of $G(x)$ which is our final goal.

\section{Vincular distant patterns}
In this section, we consider a particular kind of vincular distant patterns of size 3. The goal will be to compare the number of permutations avoiding the different kinds of 123 and 132 patterns. 
% At the end, we include a table summarising the obtained results and including information about the other vincular DPs of size 3 with avoidance sequences that are not yet found.
\subsection{Patterns of the form $\underline{ab}\square c$ and $a\square\underline{bc}$}\label{sec:vinc}
  There are $12$ patterns of this kind, and the reverse and complement maps give at most three Wilf-equivalence classes listed below. As we will show, these turn out to be different.

\begin{table}[h!]
\centering
\captionsetup{position=below}
\begin{tabular}{||c|c|c||} 
 \hline
 Class 1 & Class 2 & Class 3 \\ [0.5ex] 
 \hline\hline
$\underline{12} \square 3$ &
$1 \square \underline{32}$ &
$\underline{13} \square 2$ \\

$\underline{32} \square 1$ &
$\underline{21} \square 3$ &
$\underline{31} \square 2$ \\

$1 \square \underline{23}$ &
$\underline{23} \square 1$ &
$2 \square \underline{31}$ \\ 

$3 \square \underline{21}$ &
$3 \square \underline{12}$ &
$2 \square \underline{13}$ \\ [1ex] 
 \hline
\end{tabular}
\caption{The Wilf-equivalent classes of $\underline{ab}\square c$ and $a\square\underline{bc}$ patterns.}
\label{table:1}
\end{table}

We will first find a recurrence for the pattern $\underline{12} \square 3$:

\begin{theorem}\label{th:vinc123}
If $a_{n} = |\operatorname{Av}_{n}(\underline{12} \square 3)|$, then $a_{n} = n!$ for $0 \leq n \leq 3$, and for $n \geq 4$ we have 

\begin{align*}
    a_{n} = a_{n-1} + (n-1)a_{n-2} + \frac{(n+1)(n-2)}{2}a_{n-3} + \\ \sum\limits_{i=4}^{n-1}\left(\binom{n}{i-1}-1\right)a_{n-i} + (n-1)
\end{align*}    

\end{theorem}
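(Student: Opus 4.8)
The plan is to enumerate $\operatorname{Av}_n(\underline{12}\square 3)$ by partitioning it according to the position $p$ of the largest entry $n$, and to recover the recurrence as the sum of the contributions of $p=1,2,\dots ,n$. For $n\le 3$ there are not enough positions for an occurrence of $\underline{12}\square 3$ (the $\square$ forces an occurrence to span at least $4$ positions), so $a_n=n!$; assume $n\ge 4$ from now on.

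The key structural fact I would establish first is: if $n=\pi_p$ with $p\ge 3$, then $\pi_1>\pi_2>\cdots >\pi_{p-2}$. Indeed, an ascent $\pi_i<\pi_{i+1}$ with $i\le p-3$ would, together with $\pi_p=n>\pi_{i+1}$, give an occurrence of $\underline{12}\square 3$, since positions $i,i+1$ are adjacent and $p-(i+1)\ge 2$. The cases $p=1$ and $p=2$ are then immediate, because $n$, being the largest entry, can only be the ``$3$'' of an occurrence, so no occurrence can straddle the entry $n$: for $p=1$ one needs only $\textsf{fl}(\pi_2\cdots \pi_n)\in\operatorname{Av}_{n-1}(\underline{12}\square 3)$, contributing $a_{n-1}$; for $p=2$ the entry $\pi_1$ is an arbitrary element of $[n-1]$ and $\textsf{fl}(\pi_3\cdots \pi_n)$ must avoid the pattern, contributing $(n-1)a_{n-2}$.

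For $3\le p\le n$ I would split further by whether position $p-2$ carries a descent or an ascent. In the descent subcase the structural fact upgrades to $\pi_1>\cdots >\pi_{p-1}$, so $\pi$ consists of a decreasing prefix of length $p-1$, then $n$, then the remaining $n-p$ values; since no occurrence can straddle the decreasing prefix or the entry $n$, the only requirement left is that the suffix avoids the pattern, and this gives $\binom{n-1}{p-1}a_{n-p}$ permutations. In the ascent subcase, write $m=\pi_{p-1}$; the ascent in positions $p-2,p-1$ forces $m$ to exceed every entry in positions $p+1,\dots ,n$ (any smaller such entry would complete an occurrence), and one verifies that this condition, together with $\pi_{p-2}<m$ and the suffix avoiding the pattern, is also sufficient. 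Counting these: after choosing the $n-p$ suffix values (all $<m$) the decreasing prefix is determined and must contain some value below $m$; summing $\binom{m-1}{n-p}$ over the admissible values of $m$ and collapsing with the hockey-stick identity yields $\big(\binom{n-1}{p-2}-1\big)a_{n-p}$. Adding the two subcases and applying Pascal's rule, position $p$ contributes $\big(\binom{n}{p-1}-1\big)a_{n-p}$ for each $3\le p\le n$; in particular $p=3$ gives $\frac{(n+1)(n-2)}{2}a_{n-3}$ and $p=n$ gives $n-1$ (using $a_0=1$). Summing over $p=1,\dots ,n$,
\[
a_n=a_{n-1}+(n-1)a_{n-2}+\sum_{p=3}^{n}\Big(\binom{n}{p-1}-1\Big)a_{n-p},
\]
and splitting off the $p=3$ and $p=n$ terms gives precisely the stated formula.

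I expect the ascent subcase to be the main obstacle: one must argue carefully that ``$\pi_{p-1}$ dominates the suffix and $\pi_{p-2}<\pi_{p-1}$'' is not only necessary but also sufficient for avoidance — ruling out every other potential occurrence, including those straddling $n$ and those inside the suffix — and then perform the value-set count cleanly, paying attention to the degenerate boundary cases $p=3$ (where $\pi_1>\cdots >\pi_{p-2}$ is vacuous) and $p=n$ (where the suffix is empty).
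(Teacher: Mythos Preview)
Your proposal is correct and reaches the same recurrence, but the decomposition you use is genuinely different from the paper's. Both proofs partition $\operatorname{Av}_n(\underline{12}\square 3)$ by the position $p$ of the entry $n$ and handle $p=1,2$ identically. For $p\ge 3$, however, the paper further splits according to the \emph{position of $n-1$} (before, at, or after position $p-1$), producing separate treatments for $p=3$, $3<p<n$ and $p=n$ and, within the middle range, four sub-subcases whose contributions $\binom{n-2}{p-2}+\binom{n-2}{p-2}+\bigl(\binom{n-2}{p-3}-1\bigr)+\binom{n-2}{p-1}$ are only collapsed to $\binom{n}{p-1}-1$ at the very end via two applications of Pascal's rule. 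Your split by descent versus ascent at position $p-2$ is more uniform: it handles every $p\in\{3,\dots ,n\}$ at once, yields the two clean summands $\binom{n-1}{p-1}$ and $\binom{n-1}{p-2}-1$ directly, and combines them with a single Pascal step. The price you pay is the hockey-stick summation over the value $m=\pi_{p-1}$ in the ascent subcase, which the paper avoids by tracking $n-1$ explicitly; conversely, the paper's approach requires checking more configurations but never sums over a value. Your sufficiency argument in the ascent subcase is fine: the only potential occurrences with $i\le p$ are at $i=p-2$ (blocked because $\pi_{p-1}$ dominates the suffix) and $i=p-1$ (blocked because $\pi_p=n$), exactly as you indicate.
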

\begin{proof}
Let $q = \underline{12} \square 3$ and let $\pi = \pi_{1}\pi_{2} \cdots \pi_{n}$ be a permutation of $[n]$ that avoids $q$. We will consider five cases for the position of the number $n$ in $\pi$. Denote this position with $i$, so $\pi_{i} = n$.
\begin{enumerate}[label={Case \arabic*.}]
    \item \hspace{3mm}$i=1$: $\pi  = n\pi_{2}\cdots\pi_{n}$\\
    In this case, $n$ will not participate in any occurrence of $q$ since it can only be the first letter in such occurrence. Thus since $\pi$ avoids $q$ then $\pi_{2} \cdots \pi_{n}$ must avoid $q$. There are $a_{n-1}$ such permutations $\pi_{2}\cdots\pi_{n}\in S_{n-1}$. 
    \item \hspace{3mm}$i=2$: $\pi  = \pi_{1}n \cdots \pi_{n}$\\
    Here, $n$ cannot participate in any occurrence of $q$. Neither can $\pi_{1}$, because it could participate only together with $\pi_{2} = n$. Then $\pi_{1}$ can be any of the remaining $n-1$ numbers. Regardless of the choice of $\pi_{1}$, one would have $a_{n-2}$ ways to choose the order of the remaining $n-2$ letters since $\textsf{fl}(\pi_{3}\cdots \pi_{n})$ must avoid $q$. This gives $(n-1)a_{n-2}$ ways to obtain $\pi$.  
    \item \hspace{3mm}$i=3$: $\pi  = \pi_{1}\pi_{2}n\cdots \pi_{n}$\\
    The number $n$ cannot be part of a $q$-occurrence, again. Therefore if $n-1$ is in an occurrence of $q$, then it must be the last letter (the '3'). Let $j = \pi^{-1}(n-1)$ be the position of $n-1$ in $\pi$.
    \begin{enumerate}[label={Case 3\alph*.}]
        \item $j = 1$ or $j = 2$ \\
        None of the first three elements of $\pi$ will be part of any occurrence of $q$. Thus we have $2(n-2)a_{n-3}$ permutations $\pi\in \operatorname{Av}_{n}(q)$ with $i=3$ and $j = 1$ or $j = 2$, since we can choose the position, $j$, of $n-1$ in 2 ways and the other of the first 2 letters in $n-2$ ways. The rest of the permutation must avoid $q$ and there are $a_{n-3}$ possibilities for that. We get a $q$-avoiding permutation in all of these cases.
        \item $j>3$ \\
        Here, since $\pi$ avoids $q$, we must have $\pi_{1}>\pi_{2}$, because otherwise $\pi_{1}\pi_{2}\pi_{j}$ would be a $q$-occurrence. We can determine $\pi_{1}$ and $\pi_{2}$ in $\binom{n-2}{2}$ ways. The number of ways to determine $\pi_{4} \cdots \pi_{n}$ would be again $a_{n-3}$, despite knowing that $n-1$ will be one of these letters, simply because this part of $\pi$ must avoid $q$ and because once we have $\pi_{1}$, $\pi_{2}$ and $\pi_{3}$ fixed, this part will correspond to a permutation in $\operatorname{Av}_{n-3}(q)$. 
    \end{enumerate}
    % In total, we obtain $2(n-2)a_{n-3} + \binom{n-2}{2}a_{n-3} = \frac{(n+1)(n-2)}{2}a_{n-3}$ permutations avoiding q in this case and this is the third term in our sum in the LHS of Eq. \eqref{eq:vinc123}.
    \item \hspace{3mm}$3<i<n$. $\pi  = \pi_{1}\pi_{2}\cdots n\cdots \pi_{n}$\\
    Since $\pi$ avoids $q$, the numbers $\pi_{1}$, $\pi_{2}, \cdots \pi_{i-2}$ must be in decreasing order. We have three subcases for the position $j = \pi^{-1}(n-1)$.
    \begin{enumerate}[label={Case 4\alph*.}]
        \item $j=i-1$: $\pi  = \pi_{1}\pi_{2}\cdots \pi_{i-2}(n-1)n\cdots \pi_{n}$\\
        The numbers $\pi_{1}, \cdots ,\pi_{i-2}$ must be in decreasing order since $\pi\in \operatorname{Av}(q)$. Once we have chosen these $i-2$ numbers of $\pi$ then neither $\pi_{i-1} = n-1$ nor $\pi_{i} = n$ could participate in a $q$-occurrence and any ordering of the last $n-i$ numbers that avoids $q$ would give us a different $q$-avoider $\pi$. This gives $\binom{n-2}{i-2}a_{n-i}$ permutations for this case.
        \item $j<i-1$ (in fact, $j=1$): $\pi  = (n-1)\pi_{2}\cdots n\pi_{i+1}\cdots\pi_{n}$\\
        This would imply that $j=1$ since $\pi_{1}, \cdots , \pi_{i-2}$ are in decreasing order. If $\pi_{i-2}>\pi_{i-1}$, then we can select $\pi_{2}, \cdots ,\pi_{i-1}$ in $\binom{n-2}{i-2}$ ways which gives $\binom{n-2}{i-2}a_{n-i}$ more $q$-avoiding permutations. Slightly more attention is required for the subcase $\pi_{i-2}<\pi_{i-1}$. In order to avoid $q$, all of $\pi_{i+1}, \cdots , \pi_{n}$ must be smaller than $\pi_{i-1}$, because otherwise $\pi_{i-2}\pi_{i-1}\pi_{k}$ would be a $q$-occurrence for some $k > i$. Now, we should calculate how many different permutations $\pi$ satisfy the described conditions. For clarity, one may look at Figure \ref{fig:vinc123} that visualizes the order of the elements in one such $\pi$.
        \begin{figure}
            \centering
        \begin{tikzpicture}[scale=1.25]
    \node[below] at (-3,2) {$n-1=\pi_{1}$}; 
    \filldraw [black] (-2,2) circle (2.5pt);
    \node[below] at (-2,1.5) {$\pi_{2}$}; 
    \filldraw [black] (-1.5,1.5) circle (2.5pt);
    \filldraw [black] (-1.2,1.2) circle (1pt);
    \filldraw [black] (-0.9,0.9) circle (1pt);
    \node[below] at (-1.2,0.6){$\pi_{i-2}$};
     \filldraw [black] (-0.6,0.6) circle (2.5pt);
    %  \node[above] at (-0.3,1.75){$\pi_{i-1}$};
     \filldraw [black] (-0.3,1.75) circle (2.5pt);
     \node[above] at (0,2.25){$n = \pi_{i}$};
     \filldraw [black] (0,2.25) circle (2.5pt);
     
     \filldraw [black] (0.3,1.35) circle (2.5pt);
     \filldraw [black] (0.45,0.75) circle (2.5pt);
     \filldraw [black] (0.7,0.95) circle (1pt);
     \filldraw [black] (0.9,0.95) circle (1pt);
     \filldraw [black] (1.1,0.95) circle (1pt);
     \filldraw [black] (1.4,0.8) circle (2.5pt);
     \filldraw [black] (1.55,0.5) circle (2.5pt);
     \filldraw [black] (1.7,1.3) circle (2.5pt);
    \end{tikzpicture}
            \caption{sketch of the order of the elements of $\pi$ in Case $4b.$}
             \label{fig:vinc123}
        \end{figure}
        
        % \begin{figure}
        %     \centering
        %     \includegraphics{vinc123.png}
        %     \caption{sketch of the order of the elements of $\pi$ in case $4.(b)$}
        %     \label{fig:vinc123}
        % \end{figure}
        We claim that the number of these permutations is $(\binom{n-2}{i-3}-1)a_{n-i}$. Indeed, we can first choose the last $n-i$ numbers $\pi_{i+1},\pi_{i+2}, \cdots ,\pi_{n}$, and the number $\pi_{i-1}$. Those are the unlabeled elements on Figure \ref{fig:vinc123}. We can do that in $\binom{n-2}{n-i+1} = \binom{n-2}{i-3}$ ways. Out of these choices, only the one where we have selected the smallest numbers, $1,2, \cdots , n-i+1$, would force $\pi_{i-2}>\pi_{i-1}$ which we do not want to happen, so we exclude this single choice. For all the other choices, we simply have that the biggest number among the chosen has to be $\pi_{i-1}$ and the other $n-i$ chosen numbers can be ordered in $a_{n-i}$ ways at positions $i+1$, $i+2$, $\cdots$, $n$. The unchosen $i-3$ numbers are ordered decreasingly after $\pi_{1} = n-1$, at positions $2,3, \cdots , i-2$.
        \item $j > i$ \\
        In this case, the numbers $\pi_{1}, \cdots , \pi_{i-1}$ must all be in decreasing order. Thus, it suffices just to choose which are they and choose the numbers in the remaining part of the permutation, i.e., we have $\binom{n-2}{i-1}a_{n-i}$ permutations here.
    \end{enumerate}
    \item \hspace{3mm}$i=n$. $\pi  = \pi_{1}\pi_{2}\pi_{3}\cdots n$\\
    Again, the numbers $\pi_{1}, \cdots , \pi_{n-2}$ must be in decreasing order, so it suffices to choose $\pi_{n-1}$ in $n-1$ ways.
\end{enumerate}
It remains to observe that in Case 4, after summing the number of $q$-avoiding permutations for the three subcases, we get 
\begin{equation*} \label{eq1}
\begin{split}
\left(\binom{n-2}{i-2} + \binom{n-2}{i-2} + \left(\binom{n-2}{i-3}-1\right) + \binom{n-2}{i-1}\right)a_{n-i} & = \\ \left(\binom{n-2}{i-2} + \left(\binom{n-2}{i-3}-1\right) + \binom{n-1}{i-1}\right)a_{n-i} & = \\ \left(\binom{n-1}{i-2} + \binom{n-1}{i-1} -1\right)a_{n-i} = \left(\binom{n}{i-1} -1\right)a_{n-i}
\end{split}
\end{equation*}
\end{proof}
The first few elements of the sequence $|\operatorname{Av}_{n}(\underline{12} \square 3)|$ for $n\geq 4$ are 
\begin{center}
$20,75,316,1464,7359,39815,230306.$
\end{center}
This is not part of any sequence in OEIS. 

This enumerates avoidance for Class 1 patterns in Table 1. Similar recurrence can be found for the patterns in Class 2. We give it below using the pattern $1 \square \underline{32}$.
\begin{theorem}\label{th:vinc132}
If $b_{n} = |\operatorname{Av}_{n}(1 \square \underline{32})|$, then $b_{n} = n!$ for $0 \leq n \leq 3$ and for $n \geq 4$ we have 
\begin{equation}\label{eq:vinc132}
\begin{split}
    b_{n} = b_{n-1} + (n-1)b_{n-2} + \frac{(n+1)(n-2)}{2}b_{n-3} + \\ \sum\limits_{i=2}^{n-3}\left(i\binom{n-2}{i}+\binom{n-1}{i-1}\right)b_{i-1} + (n-1).
\end{split}
\end{equation}
\end{theorem}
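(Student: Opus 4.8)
The plan is to mirror the proof of Theorem~\ref{th:vinc123}, but to stratify $\operatorname{Av}_n(q)$ by the position of the \emph{smallest} entry rather than the largest one: for $q=1\square\underline{32}$ the value $1$ of $\pi$ can only ever play the role of the first letter of the pattern in an occurrence (it can be neither the middle letter nor the largest one), just as $n$ could only be the last letter of $\underline{12}\square 3$. The base cases $b_n=n!$ for $0\le n\le 3$ are immediate, since an occurrence of $q$ requires four positions --- three letters and at least one element in the gap. So fix $n\ge 4$ and $\pi\in\operatorname{Av}_n(q)$, and let $p$ be the position of the entry $1$, i.e.\ $\pi_p=1$.

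The first step is a structural description: $\pi$ avoids $q$ if and only if (a) the prefix $\pi_1\cdots\pi_{p-1}$ avoids $q$, equivalently $\textsf{fl}(\pi_1\cdots\pi_{p-1})\in\operatorname{Av}_{p-1}(q)$; (b) $\pi_{p+2}<\pi_{p+3}<\cdots<\pi_n$; and (c) either $\pi_{p+1}<\pi_{p+2}$, or $\pi_{p+1}>\pi_{p+2}$ and $\pi_{p+2}<\min\{\pi_1,\dots,\pi_{p-1}\}$ (with (b) and (c) read as vacuous once the relevant positions cease to exist). The crucial point is the necessity of (b): a descent at a position $b\ge p+2$ would, together with the entry $1$ sitting at position $p\le b-2$, yield an occurrence $(p,b,b+1)$ of $q$; likewise a descent at position $p+1$ forces condition (c), while descents inside the prefix are controlled by (a), and the forced descent $\pi_{p-1}>\pi_p=1$ is harmless because $1$ cannot be the middle letter. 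For sufficiency one argues in the reverse direction: any occurrence $(a,b,b+1)$ of $q$ has its vincular descent at some position $b$, which by the above must lie in one of those locations, and in each case (a)--(c) exclude it --- in particular, for the descent at position $p+1$ the only candidate for the first letter would be the entry $1$ at position $p$, but $p$ is adjacent to $p+1$ and hence fails the gap requirement.

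The second step is the count. Conditioning on $p$, the relative order of the $p-1$ prefix entries contributes a factor $b_{p-1}$, independent of which values occupy the prefix. Let $Q$ be the set of the $n-p$ values placed after position $p$. If $2\notin Q$, condition (c) forces the suffix $\pi_{p+1}\cdots\pi_n$ to be increasing, so there is exactly one valid arrangement; if $2\in Q$, the suffix may be increasing, or else $\pi_{p+2}=\min Q=2$ and $\pi_{p+1}$ is any of the remaining $n-p-1$ values of $Q$, giving $n-p$ arrangements in all. Since there are $\binom{n-2}{p-2}$ admissible sets $Q$ with $2\notin Q$ and $\binom{n-2}{p-1}$ with $2\in Q$, the number of avoiders with $\pi_p=1$ is $b_{p-1}\left(\binom{n-2}{p-2}+(n-p)\binom{n-2}{p-1}\right)$, which a short binomial manipulation rewrites as $b_{p-1}\left(p\binom{n-2}{p}+\binom{n-1}{p-1}\right)$. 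Summing over $p=1,\dots,n$ and peeling off the contributions of $p=1$ (equal to $n-1$), $p=n$ (equal to $b_{n-1}$), $p=n-1$ (equal to $(n-1)b_{n-2}$) and $p=n-2$ (equal to $\tfrac{(n+1)(n-2)}{2}b_{n-3}$) leaves precisely the displayed recurrence, the leftover range $2\le p\le n-3$ being exactly the stated sum.

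I expect the sufficiency half of the structural description to be the main obstacle: one must check that once (a)--(c) hold, no occurrence of $q$ mixes entries of the prefix with entries lying past position $p$, which comes down to a careful case analysis of where the vincular descent of a hypothetical occurrence can sit, together with the observation that the mandatory gap between the first and the third letter eliminates the boundary cases. Once this is settled, the counting is routine; the only thing to watch is the degenerate ranges --- an empty prefix when $p=1$, and a suffix of length at most $1$ when $p\ge n-1$ --- which turn out to already agree with the general formula.
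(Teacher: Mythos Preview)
Your argument is correct. The paper does not actually supply a proof of this theorem; it only says that a ``similar recurrence'' to Theorem~\ref{th:vinc123} can be found, so your proposal is in fact a complete proof where the paper has none. Your choice to stratify by the position of the entry $1$ is the natural analogue of the paper's stratification by the position of $n$ in the proof of Theorem~\ref{th:vinc123}, and it is arguably cleaner: the structural description (a)--(c) is stated once and uniformly, and the counting reduces to a single binomial identity rather than five cases with subcases. The key observation you use---that if there is a descent at position $p+1$ then $\pi_{p+2}$ is forced to equal $\min Q$ by condition~(b), so that condition~(c) becomes simply ``$2\in Q$''---is exactly what makes the enumeration collapse to the single closed form $\binom{n-2}{p-2}+(n-p)\binom{n-2}{p-1}$, and your binomial rewriting to $p\binom{n-2}{p}+\binom{n-1}{p-1}$ is easily verified via $(n-p-1)\binom{n-2}{p-1}=p\binom{n-2}{p}$ and Pascal's rule. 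The boundary cases $p=1$ and $p\ge n-1$ do indeed fall out of the general formula (with $\binom{n-2}{-1}=0$ and $\min\emptyset=+\infty$), as you note.
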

The first few elements of the sequence $|\operatorname{Av}_{n}(1 \square \underline{32})|$ for $n\geq 4$ are
\begin{center}
$20,76,326,1544,7954,44164,262456$.    
\end{center}
 This is not part of any sequence in OEIS.

Theorems \ref{th:vinc123} and \ref{th:vinc132} differ only in the sums in their right-hand sides. Applying the complement map after the reverse map, we see that $|\operatorname{Av}_{n}(\underline{12} \square 3)| = |\operatorname{Av}_{n}(1 \square \underline{23})|$ for every positive $n$ and we already placed those two patterns in the same of the three classes for the considered set of vincular DPs. Using this, we can easily prove the following 
\begin{theorem}\label{th:123less}
If $n>4$, then $|\operatorname{Av}_{n}(1 \square \underline{23})| < |\operatorname{Av}_{n}(1 \square \underline{32})|$.
\end{theorem}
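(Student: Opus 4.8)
The plan is to work entirely with the explicit recurrences of Theorems~\ref{th:vinc123} and \ref{th:vinc132} and to run an induction on $n$. First I would record that, by applying the reverse map followed by the complement map (as already observed just before the statement), $1\square\underline{23}$ is carried to $\underline{12}\square 3$, so that $|\operatorname{Av}_{n}(1\square\underline{23})| = a_{n}$ with $a_{n}$ satisfying the recurrence of Theorem~\ref{th:vinc123}, while $|\operatorname{Av}_{n}(1\square\underline{32})| = b_{n}$ satisfies that of Theorem~\ref{th:vinc132}. Since $a_{n} = b_{n} = n!$ for $n \le 3$ and $a_{4} = b_{4} = 20$, and since the two recurrences differ only in their summation terms and share the leading terms and the trailing summand $n-1$, the claim $a_{n} < b_{n}$ for $n > 4$ should come down to comparing those sums term by term once they are rewritten over a common index.

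The preparatory step is a reindexing. Putting $j = n - i$ in the sum of Theorem~\ref{th:vinc123} rewrites it as $\sum_{j=1}^{n-4}\bigl(\binom{n}{j+1}-1\bigr)a_{j}$, while putting $j = i - 1$ in the sum of Theorem~\ref{th:vinc132} rewrites it as $\sum_{j=1}^{n-4}\bigl((j+1)\binom{n-2}{j+1}+\binom{n-1}{j}\bigr)b_{j}$. I would then establish the coefficientwise inequality
\[
(j+1)\binom{n-2}{j+1}+\binom{n-1}{j}\ \ge\ \binom{n}{j+1}-1\qquad(1\le j\le n-4).
\]
Expanding by Pascal's rule twice gives $\binom{n}{j+1} = \binom{n-2}{j+1}+\binom{n-2}{j}+\binom{n-1}{j}$, so the inequality reduces to $j\binom{n-2}{j+1}+1 \ge \binom{n-2}{j}$; writing $\binom{n-2}{j+1} = \frac{n-2-j}{j+1}\binom{n-2}{j}$, this becomes $\binom{n-2}{j}\,(j^{2}+3j+1-jn) \le j+1$, and since $1\le j\le n-4$ forces $jn \ge j(j+4) \ge j^{2}+3j+1$, the left-hand side is non-positive while the right-hand side is positive, so the inequality holds.

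With these ingredients the induction closes quickly. For the base case one computes $a_{5} = 75 < 76 = b_{5}$ directly from the recurrences, while $a_{4} = b_{4}$. For $n \ge 6$, assume $a_{k} \le b_{k}$ for all $k < n$, with $a_{k} < b_{k}$ for $5 \le k < n$. Each of the three leading terms of the $a_{n}$-recurrence is then at most the corresponding term of the $b_{n}$-recurrence, strictly so for the $a_{n-1}$ term since $n-1 \ge 5$, while for every $j$ in $[1,n-4]$ the reindexed summands satisfy $\bigl(\binom{n}{j+1}-1\bigr)a_{j} \le \bigl((j+1)\binom{n-2}{j+1}+\binom{n-1}{j}\bigr)b_{j}$ by the coefficient inequality together with $a_{j} \le b_{j}$ and $\binom{n}{j+1}-1 \ge 0$; the trailing summand $n-1$ is the same on both sides. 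Adding everything up yields $a_{n} < b_{n}$. The only mildly delicate point is the binomial-coefficient inequality above; the rest is bookkeeping, though one should take care that the index ranges match after reindexing (for $n=5$ both sums reduce to their single $j=1$ term, matching the $75$ versus $76$ count) and that the inductive hypothesis covers every index $j \le n-4 < n$ occurring in the sums.
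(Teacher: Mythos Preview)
Your proof is correct and follows essentially the same approach as the paper: both arguments reindex the two sums so that they run over a common range with matching $a_{\cdot}$ and $b_{\cdot}$ factors, reduce the comparison to the same binomial-coefficient inequality (your inequality with index $j$ is the paper's inequality \eqref{ineq:summands} under the substitution $j=n-i$), and conclude by comparing term by term. The only notable difference is that you make the induction on $n$ explicit and derive strictness from the $a_{n-1}<b_{n-1}$ term together with the base case $a_5=75<76=b_5$, whereas the paper proves the coefficient inequality strictly and leaves the inductive use of $a_{k}\le b_{k}$ for $k<n$ implicit; your version is slightly more careful in this respect.
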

\begin{proof}
We just noted that $|\operatorname{Av}_{n}(1 \square \underline{23})|$ is given by the number $a_{n}$ from Theorem \ref{th:vinc123}, while $|\operatorname{Av}_{n}(1 \square \underline{32})|$ is given by the number $b_{n}$ from equation $\eqref{eq:vinc132}$. By substituting $j = n-i+1$, we get that the sum in the right-hand side of equation $\eqref{eq:vinc132}$ can be written as
\begin{equation}
    \sum\limits_{j=4}^{n-1}((n-j+1)\binom{n-2}{j-3} + \binom{n-1}{j-1})b_{n-j}.
\end{equation} 
Then, in order to obtain this inequality, it suffices to prove that for every $n>4$ and $4 \leq i \leq n-1$:
\begin{equation}\label{ineq:summands}
    \binom{n}{i-1}-1 < (n-i+1)\binom{n-2}{i-3} + \binom{n-1}{i-1}.
\end{equation}
This is equivalent to $\binom{n-1}{i-2} -1 < (n-i+1)\binom{n-2}{i-3}$ or $\binom{n-1}{i-2} -1 < \frac{(n-i+1)(i-2)}{n-1}\binom{n-1}{i-2}$. When $n=5$ and $i=4$, we check directly that the latter holds. When $n>5$, one can easily see that $\frac{(n-i+1)(i-2)}{n-1} > 1$, for $4 \leq i \leq n-1$.
\end{proof}

It remains to investigate Class 3. A well known proof technique in the area of permutation patterns helps to do that.
\begin{theorem}\label{th:vinc13sq2}
For all $n\in\mathbb{Z^{+}}$, $\operatorname{Av}_{n}(\underline{13} \square 2) = \operatorname{Av}_{n}(13 \square 2)$, which implies that $|\operatorname{Av}_{n}(\underline{13} \square 2)| = |\operatorname{Av}_{n}(13 \square 2)|$.
\end{theorem}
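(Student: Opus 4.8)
I would first record the easy inclusion. Every occurrence of the vincular distant pattern $\underline{13}\square 2$ is, after forgetting the adjacency requirement on the ``$1$'' and the ``$3$'', an occurrence of the classical distant pattern $13\square 2$; hence a permutation that avoids $13\square 2$ also avoids $\underline{13}\square 2$, giving $\operatorname{Av}_{n}(13\square 2)\subseteq\operatorname{Av}_{n}(\underline{13}\square 2)$. The substance of the theorem is the reverse inclusion, which I would establish by contraposition: if $\pi$ contains $13\square 2$, then $\pi$ contains $\underline{13}\square 2$.

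The plan for the contrapositive is the familiar ``slide the small letter toward the large letter'' argument for vincular patterns. Assume $\pi$ contains $13\square 2$ and choose, among all occurrences — triples of positions $a<b<c$ with $c-b\ge 2$ and $\pi_{a}<\pi_{c}<\pi_{b}$ — one for which $b-a$ is as small as possible. If $b-a=1$ we are done, since then positions $a$ and $b$ are adjacent and $(a,b,c)$ is already an occurrence of $\underline{13}\square 2$. Otherwise $a<b-1<b$, so set $v\coloneqq\pi_{b-1}$ and split on how $v$ compares with $\pi_{c}$ (they are distinct). If $v>\pi_{c}$, then $\pi_{a}<\pi_{c}<v$, so $(a,b-1,c)$ is again an occurrence of $13\square 2$ — the gap condition survives because $c-(b-1)=(c-b)+1\ge 3$ — but now with $(b-1)-a<b-a$, contradicting the minimality. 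Therefore $v<\pi_{c}$; in that case $v<\pi_{c}<\pi_{b}$, so $(b-1,b,c)$ is an occurrence of the pattern $132$ in which the ``$1$'' and the ``$3$'' occupy the adjacent positions $b-1$ and $b$ and the gap between positions $b$ and $c$ is still $\ge 1$. That is precisely an occurrence of $\underline{13}\square 2$, so $\pi$ contains $\underline{13}\square 2$ and the contrapositive is proved. The final sentence $|\operatorname{Av}_{n}(\underline{13}\square 2)|=|\operatorname{Av}_{n}(13\square 2)|$ is then immediate from the set equality.

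I do not expect a genuine obstacle here: the only points requiring care are verifying that the two cases $v>\pi_{c}$ and $v<\pi_{c}$ are exhaustive, and keeping track of the gap constraint $c-b\ge 2$ under the position shift $b\mapsto b-1$ (it is not merely preserved but relaxed when the middle index moves left), together with the existence of the extremal occurrence, which is clear since there are only finitely many. The conceptual heart of the argument is simply the observation that one can always move the ``$1$'' of a $13\square 2$-occurrence one step to the right, either reaching the ``$3$'' or shrinking the distance, so the minimal-distance occurrence must already be vincular.
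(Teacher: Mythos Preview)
Your proof is correct and takes essentially the same approach as the paper: both choose an occurrence of $13\square 2$ with minimal distance between the first two positions and argue that this distance must be $1$. The only cosmetic difference is that the paper examines the letter immediately to the \emph{right} of the ``$1$'' (at position $i+1$), whereas you examine the letter immediately to the \emph{left} of the ``$3$'' (at position $b-1$); the resulting two-case analysis is otherwise identical.
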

\begin{proof}
We will prove that whenever an $n$-permutation contains the pattern $13 \square 2$, then it must contain the pattern $\underline{13} \square 2$. Take an $n$-permutation $\sigma = \sigma_{1}\sigma_{2} \cdots \sigma_{n}$ containing $q = 13 \square 2$ and let $\sigma_{i}\sigma_{j}\sigma_{k}$, $1 \leq i<j<k-1 < n$ be an occurrence of $q$ with the smallest possible distance between the $1$ and the $3$, i.e., $d = j-i$ is the smallest possible for such an occurrence. If $d = 1$, then $\sigma_{i}\sigma_{j}\sigma_{k}$ would be an occurrence of $\underline{13} \square 2$ and we are done. Assume that $d>1$ and then consider the value of $\sigma_{i+1}$. If $\sigma_{i+1}<\sigma_{k}$, then $\sigma_{i+1}\sigma_{j}\sigma_{k}$ would be a $q$-occurrence with $j-(i+1) = d-1 < d$. On the other hand, if $\sigma_{i+1}>\sigma_{k}$, then $\sigma_{i}\sigma_{i+1}\sigma_{k}$ would be a $q$-occurrence with $(i+1)-i = 1 < d$, which is again a contradiction.
\end{proof}
The theorem that we just proved and the fact that $|\operatorname{Av}_{n}(12\square 3)| = |\operatorname{Av}_{n}(13\square 2)|$ (see \cite{thesis} and subsection \ref{sec:polygons}) imply that $|\operatorname{Av}_{n}(\underline{13} \square 2)|$ is given by the right-hand side of equation \eqref{eq:firro123} and sequence A049124 in OEIS. It turns out that the patterns in the corresponding Class 3 of Table 1 have the smallest avoiding sets out of the 3 classes.

\begin{theorem}\label{th:123greater}
For all $n \geq 5$, $|\operatorname{Av}_{n}(\underline{12} \square 3)| > |\operatorname{Av}_{n}(\underline{13} \square 2)|$.
\end{theorem}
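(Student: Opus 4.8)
The plan is to bypass the explicit recurrences entirely and argue by a set inclusion together with a single explicit witness permutation. The key observation is that $\underline{12}\square 3$ is obtained from the classical distant pattern $12\square 3$ only by adding the requirement that the ``$1$'' and the ``$2$'' occupy adjacent positions; hence every occurrence of $\underline{12}\square 3$ is in particular an occurrence of $12\square 3$, and therefore a permutation avoiding $12\square 3$ automatically avoids $\underline{12}\square 3$. This gives the inclusion $\operatorname{Av}_n(12\square 3)\subseteq\operatorname{Av}_n(\underline{12}\square 3)$ for every $n$. Combining it with Theorem~\ref{th:vinc13sq2} (which gives $\operatorname{Av}_n(\underline{13}\square 2)=\operatorname{Av}_n(13\square 2)$) and the Firro count recalled in Subsection~\ref{sec:polygons} via equation~\eqref{eq:firro123} (which gives $|\operatorname{Av}_n(13\square 2)|=|\operatorname{Av}_n(12\square 3)|$), we obtain the weak inequality $|\operatorname{Av}_n(\underline{12}\square 3)|\ge|\operatorname{Av}_n(\underline{13}\square 2)|$ for all $n$ with no computation at all.

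It then remains to upgrade ``$\ge$'' to ``$>$'' for $n\ge 5$, and by the inclusion above it suffices to produce, for each such $n$, one permutation of $[n]$ lying in $\operatorname{Av}_n(\underline{12}\square 3)\setminus\operatorname{Av}_n(12\square 3)$. I would take
\[
\sigma_n \;=\; n\,(n-1)\,\cdots\,6\;1\,5\,2\,4\,3,
\]
that is, the values $6,7,\dots,n$ listed in decreasing order followed by the block $15243$ (for $n=5$ the decreasing prefix is empty and $\sigma_5=15243$). Two checks finish the argument. First, the subsequence $1,2,3$ occupying positions $n-4,\,n-2,\,n$ is an occurrence of $12\square 3$, since the gap between the ``$2$'' and the ``$3$'' contains the single entry in position $n-1$; hence $\sigma_n\notin\operatorname{Av}_n(12\square 3)$. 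Second, the only ascents of $\sigma_n$ are the adjacent pairs $1<5$ and $2<4$ inside the block: the decreasing prefix contributes no ascent and the junction $6\,1$ is a descent. For the pair $(1,5)$, every entry at least two positions to the right of the ``$5$'' is one of $4,3$, hence smaller than $5$; for the pair $(2,4)$ there is no entry at least two positions to the right of the ``$4$'' at all. So no ascent of $\sigma_n$ can play the role of the adjacent ``$12$'' in an occurrence of $\underline{12}\square 3$, i.e.\ $\sigma_n\in\operatorname{Av}_n(\underline{12}\square 3)$.

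The argument is short, and the only step that requires genuine care is the second check above: one must make sure that the mandatory size-$\ge 1$ gap between the ``$2$'' and the ``$3$'' of $\underline{12}\square 3$ is correctly accounted for when ruling out completions of the two suffix ascents, and that the decreasing prefix and the boundary contribute nothing. Once that bookkeeping is in place, $\sigma_n$ witnesses $\operatorname{Av}_n(12\square 3)\subsetneq\operatorname{Av}_n(\underline{12}\square 3)$, and the strict inequality $|\operatorname{Av}_n(\underline{12}\square 3)|>|\operatorname{Av}_n(\underline{13}\square 2)|$ follows for all $n\ge 5$.
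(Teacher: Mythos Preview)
Your argument is correct and takes a genuinely different, and considerably shorter, route than the paper's. The paper proves Theorem~\ref{th:123greater} via two technical lemmas (Lemma~\ref{lemma:key} and Lemma~\ref{lemma:almost}): it introduces the sets $U_n=C_n(\underline{12}\square 3)\cap\operatorname{Av}_n(\underline{13}\square 2)$ and $V_n=\operatorname{Av}_n(\underline{12}\square 3)\cap C_n(\underline{13}\square 2)$ and proves by induction on $n$ the refined inequality $u_{i;n}\le v_{i;n}$ for every first letter $i$, with a fairly intricate case analysis on the second (and sometimes third) letter. Your approach bypasses all of this by chaining the trivial inclusion $\operatorname{Av}_n(12\square 3)\subseteq\operatorname{Av}_n(\underline{12}\square 3)$ with two results already available in the paper---Theorem~\ref{th:vinc13sq2} and the Firro--Mansour Wilf-equivalence $|\operatorname{Av}_n(12\square 3)|=|\operatorname{Av}_n(13\square 2)|$ recalled in Subsection~\ref{sec:polygons}---and then exhibiting the explicit witness $\sigma_n=n(n-1)\cdots 6\,15243$ to make the first inclusion strict. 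The checks on $\sigma_n$ are correct: the values $1,2,3$ at positions $n-4,n-2,n$ give a $12\square 3$ occurrence, while the only ascents $(1,5)$ and $(2,4)$ of $\sigma_n$ cannot be completed to a $\underline{12}\square 3$ occurrence since nothing larger than $5$ lies at distance $\ge 2$ past position $n-3$ and nothing at all lies at distance $\ge 2$ past position $n-1$.

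What each approach buys: yours is dramatically shorter and conceptually cleaner, but it leans on the external Wilf-equivalence from Firro's thesis, which the paper quotes without proof. The paper's inductive argument is self-contained and in fact establishes the finer statement $u_{i;n}\le v_{i;n}$ for each starting value $i$, information your method does not recover.
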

To establish this fact, we will first need a few additional definitions. For a given pattern $q$, let $\operatorname{Av}_{i_{1},i_{2},\dots ,i_{k};n}(q)$ be the set of $q$-avoiders of size $n$ beginning with $i_{1},i_{2},\dots ,i_{k}$ and let $av_{i_{1},i_{2},\dots ,i_{k};n}(q)$ denotes $|\operatorname{Av}_{i_{1},i_{2},\dots ,i_{k};n}(q)|$. Moreover, let $av_{n}(q) \coloneqq|\operatorname{Av}_{n}(q)|$. Let us first prove the following simple lemma.
\begin{lemma}\label{lemma:key}
If $1\leq i \leq n-1$ and $n\geq 4$, then 
\begin{equation*}
av_{i;n}(\underline{13}\square 2) \leq av_{n;n}(\underline{13}\square 2) = av_{n-1}(\underline{13}\square 2).
\end{equation*}
Moreover, if $1\leq i \leq n-2$, then the inequality is strict, i.e., 
\begin{equation*}
av_{i;n}(\underline{13}\square 2) < av_{n;n}(\underline{13}\square 2).
\end{equation*}
\end{lemma}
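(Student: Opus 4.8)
The plan is to prove both halves by comparing permutations of $[n]$ that begin with the value $i$ to those that begin with the maximal value $n$, via a single order-shifting bijection, and then pinpointing exactly where that bijection fails to be onto. Throughout, write $q=\underline{13}\square 2$ and recall that an occurrence of $q$ in $\sigma$ is a triple of positions $a<a+1<k$ with $k\ge a+3$ and $\sigma_{a}<\sigma_{k}<\sigma_{a+1}$.

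\emph{Step 1: the stated equality.} First I would check $av_{n;n}(\underline{13}\square 2)=av_{n-1}(\underline{13}\square 2)$. If $\sigma_{1}=n$, then the entry $n$ sits in the first position: it cannot play the role of the ``$1$'' of a $q$-occurrence (which must be the smallest of the three), nor of the ``$3$'' or the ``$2$'' (each needs an entry to its left). Hence $n$ never participates, and $\sigma\mapsto\textsf{fl}(\sigma_{2}\cdots\sigma_{n})$ is a bijection from $\operatorname{Av}_{n;n}(\underline{13}\square 2)$ onto $\operatorname{Av}_{n-1}(\underline{13}\square 2)$, since flattening preserves relative order and adjacency of positions.

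\emph{Step 2: the inequality.} For $1\le i\le n-1$, consider the map $\Phi$ sending a permutation $\sigma$ with $\sigma_{1}=i$ to the permutation obtained by replacing $\sigma_{1}$ with $n$ and decreasing by $1$ every other entry that exceeds $i$. Since the entries exceeding $i$ other than $\sigma_{1}$ are exactly $i+1,\dots,n$ and become $i,\dots,n-1$, this is a bijection from $\{\sigma\in S_{n}:\sigma_{1}=i\}$ onto $\{\tau\in S_{n}:\tau_{1}=n\}$; moreover on the last $n-1$ positions $\Phi$ is an order-preserving relabeling, so $\textsf{fl}(\Phi(\sigma)_{2}\cdots\Phi(\sigma)_{n})=\textsf{fl}(\sigma_{2}\cdots\sigma_{n})$. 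If $\sigma$ avoids $q$, then in particular $\textsf{fl}(\sigma_{2}\cdots\sigma_{n})$ avoids $q$ (an occurrence there would lift to one in $\sigma$), hence so does $\textsf{fl}(\Phi(\sigma)_{2}\cdots\Phi(\sigma)_{n})$, and since $\Phi(\sigma)_{1}=n$ contributes nothing (Step 1 reasoning), $\Phi(\sigma)$ avoids $q$. Thus $\Phi$ restricts to an injection $\operatorname{Av}_{i;n}(q)\hookrightarrow\operatorname{Av}_{n;n}(q)$, which gives $av_{i;n}(q)\le av_{n;n}(q)$.

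\emph{Step 3: strictness for $i\le n-2$.} It remains to produce a $\tau\in\operatorname{Av}_{n;n}(q)$ outside the image of $\Phi|_{\operatorname{Av}_{i;n}(q)}$; since $\Phi$ is a bijection on the ambient sets, this amounts to finding $\tau$ with $\tau_{1}=n$ whose suffix flattens to a $q$-avoider but whose $\Phi$-preimage $\sigma_{0}$ (which begins with $i$) contains $q$. Note any $q$-occurrence in $\sigma_{0}$ must use positions $1,2$ and some $k\ge 4$, because the underlining forces the ``$3$'' to sit immediately after the ``$1$''. I would take $\sigma_{0}=i,\ n,\ (\text{the elements of }[n]\setminus\{i,n,i+1\}\text{ in decreasing order}),\ i+1$; here $i+1<n$ as $i\le n-2$, and $n\ge 4$ makes the last slot position $\ge 4$, so positions $1,2,n$ with values $i<i+1<n$ form a $q$-occurrence. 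On the other hand the suffix $\sigma_{0,2}\cdots\sigma_{0,n}$ begins with the suffix-maximum $n$ (which never participates) followed by a strictly decreasing run and then $i+1$; its only possible ascent is the last step into $i+1$, and an ascent at the very end cannot be the ``$13$'' of a $q$-occurrence since no entry follows it. Hence $\textsf{fl}(\sigma_{0,2}\cdots\sigma_{0,n})$ avoids $q$, so $\tau:=\Phi(\sigma_{0})\in\operatorname{Av}_{n;n}(q)$, while $\sigma_{0}\notin\operatorname{Av}_{i;n}(q)$; therefore the injection is not onto and $av_{i;n}(q)<av_{n;n}(q)$.

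The only place that needs genuine care is the last verification in Step 3 — that the suffix of $\sigma_{0}$ flattens to a $q$-avoider — which splits into the two cases according to whether $i+1$ exceeds the minimum of $[n]\setminus\{i,n,i+1\}$; in both the point is the same slogan, ``the sole ascent is terminal, leaving no room for the last letter of a $13\square 2$ pattern.'' I expect this to be the main (if modest) obstacle, the rest being routine bookkeeping with the shift map $\Phi$. (One may also appeal to Theorem~\ref{th:vinc13sq2} to replace $\underline{13}\square 2$ by $13\square 2$ throughout, but the argument above works directly with the vincular pattern and in fact uses the adjacency constraint essentially in Step 3.)
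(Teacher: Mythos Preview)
Your proof is correct and follows essentially the same approach as the paper. The paper argues the inequality via the map $i\sigma\mapsto\textsf{fl}(\sigma)$ into $\operatorname{Av}_{n-1}(\underline{13}\square 2)$, which is just your $\Phi$ composed with the Step~1 bijection; and for strictness the paper exhibits a permutation of the form $i\,n\,a\cdots(n-1)$ with $q$-avoiding suffix, which is exactly your $\sigma_{0}$ up to the choice of the middle block (you are more explicit in specifying it and in verifying that the suffix avoids $\underline{13}\square 2$).
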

\begin{proof}
For every $\pi\in \operatorname{Av}_{n-1}(\underline{13}\square 2)$, we have that $n\pi \in \operatorname{Av}_{n;n}(\underline{13}\square 2)$, since $n$ cannot participate in any occurrences of $\underline{13}\square 2$, being at first position. Conversely, for every $n\sigma \in \operatorname{Av}_{n;n}(\underline{13}\square 2)$, one have that $\sigma\in \operatorname{Av}_{n-1}(\underline{13}\square 2)$. Thus, $av_{n;n}(\underline{13}\square 2) = av_{n-1}(\underline{13}\square 2)$. In addition, for every $1\leq i \leq n-1$ and $i\sigma \in \operatorname{Av}_{i;n}(\underline{13}\square 2)$, we have $fl(\sigma)\in \operatorname{Av}_{n-1}(\underline{13}\square 2)$, which implies that $av_{i;n}(\underline{13}\square 2)\leq av_{n-1}(\underline{13}\square 2)$.

Since $n\geq 4$, when $1\leq i \leq n-2$, then we have at least one $n$-permutation $\pi = in\sigma'$, beginning with $i$, where $n\sigma' = \sigma$ is such that $fl(\sigma)\in \operatorname{Av}_{n-1}(\underline{13}\square 2)$ and $i$ is obviously part of an $\underline{13}\square 2$-occurrence. An example is $\pi = ina\cdots (n-1)$ for any $a\in [n]$, where $a\neq i,n,(n-1)$. Therefore, $fl(\sigma)\in \operatorname{Av}_{n-1}(\underline{13}\square 2)$, but $\pi = i\sigma \notin \operatorname{Av}_{i,n}(\underline{13}\square 2)$. 
\end{proof}
We will need this lemma together with a few other definitions. Given a permutation (pattern) $\sigma$, let $C_{n}(\sigma) = S_{n} \setminus \operatorname{Av}_{n}(\sigma)$ be the permutations of $S_{n}$ containing $\sigma$. Then, let 
\begin{center}
$U_{n} \coloneqq C_{n}(\underline{12} \square 3)\cap \operatorname{Av}_{n}(\underline{13} \square 2)$    
\end{center}
and 
\begin{center}
$V_{n} \coloneqq \operatorname{Av}_{n}(\underline{12} \square 3)\cap C_{n}(\underline{13} \square 2)$,  
\end{center}
with $u_{n} \coloneqq |U_{n}|$ and $v_{n} \coloneqq |V_{n}|$. In addition, let us denote with $U_{i_{1},i_{2},\dots ,i_{k};n}$ (resp. $V_{i_{1},i_{2},\dots ,i_{k};n}$) the set of permutations in $U_{n}$ (resp. in $V_{n}$) beginning with $i_{1}i_{2}\dots i_{k}$.
Furthermore, let $u_{i_{1},i_{2},\dots ,i_{k};n} \coloneqq |U_{i_{1},i_{2},\dots ,i_{k};n}|$ and $v_{i_{1},i_{2},\dots ,i_{k};n} \coloneqq |V_{i_{1},i_{2},\dots ,i_{k};n}|$. Now, we will prove the following 
\begin{lemma}\label{lemma:almost}
For each $n\geq 4$ and $1\leq i \leq n$,
\begin{center}
    $u_{i;n} \leq v_{i;n}$.
\end{center}
\end{lemma}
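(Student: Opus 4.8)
The plan is to reduce the claim to a comparison of ordinary avoidance counts and then prove that comparison by strong induction on $n$. A permutation that avoids both $\underline{12}\square 3$ and $\underline{13}\square 2$ is counted once in $\operatorname{Av}_{i;n}(\underline{12}\square 3)$ and once in $\operatorname{Av}_{i;n}(\underline{13}\square 2)$, while $V_{i;n}$ and $U_{i;n}$ consist exactly of the permutations counted by one of these sets but not the other; hence
\begin{equation*}
v_{i;n}-u_{i;n}=av_{i;n}(\underline{12}\square 3)-av_{i;n}(\underline{13}\square 2).
\end{equation*}
So it suffices to prove $av_{i;n}(\underline{12}\square 3)\ge av_{i;n}(\underline{13}\square 2)$ for every $n\ge 4$ and $1\le i\le n$. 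For $n\le 3$ neither pattern can occur at all, so both sides equal $(n-1)!$ for every $i$; this is the base of the induction.

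For the inductive step I would peel off the first entry. Write $\pi=i\pi_2\cdots\pi_n$. Since both patterns are vincular in their first two letters, the entry $i$ in position $1$ can lie in an occurrence only as the smallest letter of the pattern, and then only when $\pi_2>i$; a short case analysis of which completions of such a prefix actually create a forbidden occurrence gives
\begin{equation*}
\pi\in\operatorname{Av}_n(\underline{12}\square 3)\iff \textsf{fl}(\pi_2\cdots\pi_n)\in\operatorname{Av}_{n-1}(\underline{12}\square 3)\ \text{and}\ \bigl(\pi_2<i,\ \pi_2=n,\ \text{or}\ \pi_2=n-1\ \text{and}\ \pi_3=n\bigr),
\end{equation*}
\begin{equation*}
\pi\in\operatorname{Av}_n(\underline{13}\square 2)\iff \textsf{fl}(\pi_2\cdots\pi_n)\in\operatorname{Av}_{n-1}(\underline{13}\square 2)\ \text{and}\ \bigl(\pi_2<i,\ \pi_2=i+1,\ \text{or}\ \pi_2=i+2\ \text{and}\ \pi_3=i+1\bigr).
\end{equation*}
Because flattening puts completions of $\pi$ in bijection with completions of $\textsf{fl}(\pi_2\cdots\pi_n)$, summing over the admissible values of $\pi_2$ and $\pi_3$ turns these equivalences into the recursions, valid for $1\le i\le n-2$,
\begin{align*}
av_{i;n}(\underline{12}\square 3)&=\sum_{c=1}^{i-1}av_{c;n-1}(\underline{12}\square 3)+av_{n-1;n-1}(\underline{12}\square 3)+av_{n-2,n-1;n-1}(\underline{12}\square 3),\\
av_{i;n}(\underline{13}\square 2)&=\sum_{c=1}^{i-1}av_{c;n-1}(\underline{13}\square 2)+av_{i;n-1}(\underline{13}\square 2)+av_{i+1,i;n-1}(\underline{13}\square 2),
\end{align*}
the last one or two summands being dropped when $i\in\{n-1,n\}$, in which case both right-hand sides collapse to $av_{n-1}(\cdot)=\sum_{c=1}^{n-1}av_{c;n-1}(\cdot)$.

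The final step is a term-by-term comparison. The leading sums satisfy $\sum_{c<i}av_{c;n-1}(\underline{12}\square 3)\ge\sum_{c<i}av_{c;n-1}(\underline{13}\square 2)$ by the induction hypothesis, which (summed over all $c\le n-1$) also disposes of $i\in\{n-1,n\}$. For the two remaining summands I would bound the $\underline{13}\square 2$ quantity by the $\underline{12}\square 3$ one using only already-established facts: Lemma~\ref{lemma:key} gives $av_{i;n-1}(\underline{13}\square 2)\le av_{n-1;n-1}(\underline{13}\square 2)=av_{n-2}(\underline{13}\square 2)$, and since the front entry $i+1$ of a permutation starting $i+1,i$ lies in no occurrence of $\underline{13}\square 2$ we get $av_{i+1,i;n-1}(\underline{13}\square 2)=av_{i;n-2}(\underline{13}\square 2)\le av_{n-2;n-2}(\underline{13}\square 2)=av_{n-3}(\underline{13}\square 2)$; then Theorem~\ref{th:vinc13sq2}, the equality $|\operatorname{Av}_m(13\square 2)|=|\operatorname{Av}_m(12\square 3)|$ from Section~\ref{sec:polygons}, and the trivial inclusion $\operatorname{Av}_m(12\square 3)\subseteq\operatorname{Av}_m(\underline{12}\square 3)$ give $av_m(\underline{13}\square 2)\le av_m(\underline{12}\square 3)$ for all $m$ (note this is the non-strict bound, so no circularity with Theorem~\ref{th:123greater}); finally $av_{n-2}(\underline{12}\square 3)=av_{n-1;n-1}(\underline{12}\square 3)$ and $av_{n-3}(\underline{12}\square 3)=av_{n-2,n-1;n-1}(\underline{12}\square 3)$ because the one or two largest entries at the front of a permutation cannot occur in $\underline{12}\square 3$. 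Adding the three inequalities yields $av_{i;n}(\underline{12}\square 3)\ge av_{i;n}(\underline{13}\square 2)$, completing the induction. The main obstacle is the bookkeeping behind the two displayed equivalences — pinning down precisely which pairs $(\pi_2,\pi_3)$ keep the leading entry out of every forbidden occurrence — together with the handful of small sizes where Lemma~\ref{lemma:key} is not literally applicable, where one instead uses the explicit value $av_{i;m}(\underline{13}\square 2)=(m-1)!$ for $m\le 3$; everything else is routine.
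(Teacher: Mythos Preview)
Your proof is correct and follows the same inductive strategy as the paper: peel off the first entry, handle the case $\pi_2<i$ directly by the induction hypothesis, and bound the remaining ascent cases via Lemma~\ref{lemma:key}. Your reformulation in terms of $av_{i;n}$ rather than $u_{i;n},v_{i;n}$, together with the chain $av_m(\underline{13}\square 2)=av_m(13\square 2)=av_m(12\square 3)\le av_m(\underline{12}\square 3)$ (from Theorem~\ref{th:vinc13sq2}, Firro's equality in Section~\ref{sec:polygons}, and the trivial inclusion), streamlines the bookkeeping and replaces the paper's separate ``begins with an occurrence / does not'' split; the paper instead recovers that total inequality from the induction hypothesis itself (equation~\eqref{eq:final}).
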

\begin{proof}
Note that the statement implies $u_{n}\leq v_{n}$ and $|C_{n}(\underline{12} \square 3)|\leq |C_{n}(\underline{13} \square 2)|$ (resp. $|\operatorname{Av}_{n}(\underline{12} \square 3)|\\{\geq} |\operatorname{Av}_{n}(\underline{13} \square 2)|$), for each $n\geq 4$. Indeed, if $T_{n} = C_{n}(\underline{12} \square 3)\cap C_{n}(\underline{13} \square 2)$, then $C_{n}(\underline{12} \square 3) = U_{n}\cup T_{n}$ and $C_{n}(\underline{13} \square 2) = V_{n}\cup T_{n}$. Thus, $u_{n}\leq v_{n}$ implies $|C_{n}(\underline{12} \square 3)|\leq |C_{n}(\underline{13} \square 2)|$. We will proceed by induction on $n$. One can directly check that $u_{i;4} \leq v_{i;4}$ for each $1\leq i \leq 4$. Now assume that $u_{i;n'} \leq v_{i;n'}$, for each $4\leq n' \leq n-1$ and $1\leq i \leq n'$, for a given $n\geq 5$. Consider $u_{i;n}$ and $v_{i;n}$ for $1\leq i \leq n$. If $i = n$, then using the induction hypothesis, we have $u_{n;n} = u_{n-1}\leq v_{n-1} = v_{n;n}$. Similarly, if $i = n-1$, then we have $u_{n-1;n} = u_{n-1}\leq v_{n-1} = v_{n-1;n}$. Now, let $1\leq i \leq n-2$. By the induction hypothesis, $u_{i,i-k;n} = u_{i-k;n-1}\leq  v_{i-k;n-1} = v_{i,i-k;n}$, for each $1\leq k \leq i-1$. It remains to compare the numbers $u_{i,i+k;n}$ and $v_{i,i+k;n}$ for $1\leq k \leq n-i$. Note that when $k\geq 3$, then $u_{i,i+k;n} = 0$, since for these values of $k$, any $n$-permutation beginning with $i(i+k)$ will contain an occurrence of $\underline{13}\square 2$. Similarly, $v_{i,i+k;n} = 0$, when $i+k <n-1$, since for these values of $k$, any $n$-permutation beginning with $i(i+k)$ will contain an occurrence of $\underline{12}\square 3$. We will show that $u_{i,i+1;n}\leq v_{i,n;n}$ and that $u_{i,i+2;n}\leq v_{i,n-1;n}$ which will complete the proof. \\
Consider the sets $U_{i,i+1;n}$ and $V_{i,n;n}$. First, let us look at those $\pi\in U_{i,i+1;n}$ (resp., $\pi \in V_{i,n;n}$) which do not begin with an $\underline{12}\square 3$ occurrence (resp., not with an $\underline{13}\square 2$ occurrence). Then, note that $\pi$ must begin with $(n-2)(n-1)n$ (resp. with $(n-2)n(n-1)$). However, we have 
\begin{equation}\label{eq:notBeg}
    u_{n-2,n-1,n;n} = u_{n-3}\leq v_{n-3} = v_{n-2,n;n-1}
\end{equation}
using the induction hypothesis, again. 

Now, let us look at those $\pi\in U_{i,i+1;n}$ beginning with a $\underline{12}\square 3$ occurrence. Their number is given by
\begin{equation}\label{eq:final1}
av_{i;n-1}(\underline{13}\square 2) - av_{n-2,n-1;n-1}(\underline{13}\square 2).
\end{equation}
 Indeed, after we remove from $\pi$ its first element $i$ and flatten, we obtain an $(n-1)$-avoider of $\underline{13}\square 2$. Conversely, for any permutation $\pi = i\pi_{2}\dots \pi_{n-1}\in \operatorname{Av}_{i;n-1}(\underline{13}\square 2)$, one can increase by $1$ all the elements of $\pi$ greater than or equal to $i$ and then add $i$ at the beginning, to obtain a permutation in $U_{i,i+1;n}$. This permutation will begin with a $\underline{12}\square 3$ occurrence, unless it begins with $(n-2)(n-1)n$, i.e., when $i=n-2$ and when $\pi\in \operatorname{Av}_{n-2,n-1;n-1}(\underline{13}\square 2)$. Therefore, we should subtract $av_{n-2,n-1;n-1}(\underline{13}\square 2)$. Respectively, for the number of permutations $\pi \in V_{i,n;n}$, beginning with an $\underline{13}\square 2$ occurrence, one would have 
 \begin{equation}\label{eq:final2}
 av_{n-1;n-1}(\underline{12}\square 3) - av_{n-1,n-2;n-1}(\underline{12}\square 3).    
 \end{equation}
 It is not difficult to see that $av_{n-2,n-1;n-1}(\underline{13}\square 2) = av_{n-3}(\underline{13}\square 2)$ and that $av_{n-1,n-2;n-1}(\underline{12}\square 3)= av_{n-3}(\underline{12}\square 3)$. Hence, by using expressions \eqref{eq:final1}, \eqref{eq:final2} and equation \eqref{eq:notBeg}, we see that in order to establish that $u_{i,i+1;n}\leq v_{i,n;n}$, it remains to prove the inequality below for each $1\leq i \leq n-2$:
\begin{equation}\label{eq:inAlmost}
    av_{i;n-1}(\underline{13}\square 2) - av_{n-3}(\underline{13}\square 2) \leq av_{n-1;n-1}(\underline{12}\square 3) - av_{n-3}(\underline{12}\square 3).
\end{equation}
By lemma \ref{lemma:key}, we have that $av_{i;n-1}(\underline{13}\square 2)\leq av_{n-1;n-1}(\underline{13}\square 2) = av_{n-2}(\underline{13}\square 2)$. We also have that $av_{n-1;n-1}(\underline{12}\square 3) = av_{n-2}(\underline{12}\square 3)$. Thus, it suffices to prove that
\begin{equation}\label{eq:tmp}
    av_{n-2}(\underline{13}\square 2) - av_{n-3}(\underline{13}\square 2) \leq av_{n-2}(\underline{12}\square 3) - av_{n-3}(\underline{12}\square 3).
\end{equation}
Using that $av_{n-3}(q) = av_{n-2;n-2}(q)$ for any of the patterns $q = \underline{12}\square 3$ or $q = \underline{13}\square 2$, as well as the relation 
\begin{equation}
av_{i;n-2}(\underline{13}\square 2)\leq av_{i;n-2}(\underline{12}\square 3) \Longleftrightarrow v_{i;n-2}\geq u_{i;n-2},    
\end{equation}
we see that equation \eqref{eq:tmp} is equivalent to 
\begin{equation}\label{eq:final}
    \sum\limits_{i=1}^{n-3} v_{i;n-2} \geq \sum\limits_{i=1}^{n-3} u_{i;n-2},
\end{equation}
which follows directly, because by the induction hypothesis $u_{i;n-2}\leq v_{i;n-2}$, $\forall 1\leq i \leq n-3$. From equations \eqref{eq:notBeg} and \eqref{eq:final}, we conclude that $u_{i,i+1;n}\leq v_{i,n;n}$.

One could establish that $u_{i,i+2;n}\leq v_{i,n-1;n}$ in almost the same way, by first noticing that $U_{i,i+2;n} = U_{i,i+2,i+1;n}$ and that $V_{i,n-1;n} = V_{i,n-1,n;n}$ since the permutations in $U_{i,i+2;n}$ (resp. in $V_{i,n-1;n}$) do not have a $\underline{13}\square 2$ (resp. a $\underline{12}\square 3$) occurrence. Then, the only thing that remains is to consider the cases $i=n-2$ and $i\neq n-2$ and to use the induction hypothesis and lemma \ref{lemma:key}. In particular, if $i=n-2$, then 
\begin{equation}
    u_{n-2,n,n-1;n} = u_{n-3}\leq v_{n-3} = v_{n-2,n-1,n;n}.
\end{equation}
If $i\neq n-2$, then $\pi\in U_{i,i+2,i+1;n}$ (resp. in $V_{i,n-1,n;n}$) begins with an $\underline{12}\square 3$ (resp. an $\underline{13}\square 2$) occurrence and 
\begin{equation}
    u_{i,i+2,i+1;n} = av_{i;n-2}(\underline{13}\square 2) \leq av_{n-2;n-2}(\underline{13}\square 2)
    \end{equation}
by lemma \ref{lemma:key}. In addition,
    \begin{equation}
    av_{n-2;n-2}(\underline{13}\square 2) \leq av_{n-2;n-2}(\underline{12}\square 3) = v_{i,n-1,n;n}
\end{equation}
by the induction hypothesis.
\end{proof}
As we have pointed out, lemma \ref{lemma:almost} implies that $|\operatorname{Av}_{n}(\underline{12} \square 3)|\geq |\operatorname{Av}_{n}(\underline{13} \square 2)|$, for each $n\geq 4$. In order to obtain a proof of theorem \ref{th:123greater}, we should just use the second part of lemma \ref{lemma:key} to see that inequality \eqref{eq:inAlmost} is strict when $n-1\geq 4$, i.e., when $n\geq 5$.

We are now ready to formulate an interesting conclusion. The last Theorem \ref{th:123greater} together with Theorem \ref{th:123less}, the result on consecutive patterns of Elizalde \cite{Elizalde2003} and the corollary of the result of Hopkins (Theorem \ref{th:123IsLargest}) imply the following:
\begin{corollary}\label{cor:venn}
Consider the set of distant patterns 
\begin{equation*}
    X = \{1\square\underline{23}, \underline{12}\square 3, 1\square 2 \square 3, \underline{123}\}.
\end{equation*} 
Take any pattern $p\in X$ and switch the places of the letters $2$ and $3$ to get a pattern $p'$ in $Y = \{1\square\underline{32},\underline{13}\square 2,1\square 3 \square 2, \underline{132}\}$. We have that $\operatorname{Av}_{n}(p)>\operatorname{Av}_{n}(p')$ for all $n>5$, $p\in X$ and the corresponding $p'\in Y$, except for $1\square \underline{23}$ which is avoided by fewer permutations of size $n$, compared to its counterpart $1\square \underline{32}$.
\end{corollary}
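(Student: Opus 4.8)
The plan is to obtain the corollary as a direct consequence of four facts that are already available, one for each of the four pairs $(p,p')$ with $p\in X$ and $p'\in Y$ the pattern obtained from $p$ by transposing the letters $2$ and $3$. First I would record the pairing explicitly: $1\square\underline{23}\leftrightarrow 1\square\underline{32}$, $\underline{12}\square 3\leftrightarrow\underline{13}\square 2$, $1\square 2\square 3=\operatorname{dist}_1(123)\leftrightarrow 1\square 3\square 2=\operatorname{dist}_1(132)$, and $\underline{123}\leftrightarrow\underline{132}$, so that the claimed statement splits into four independent comparisons of avoidance numbers.

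For the pair $\underline{12}\square 3$ versus $\underline{13}\square 2$, the inequality $|\operatorname{Av}_n(\underline{12}\square 3)|>|\operatorname{Av}_n(\underline{13}\square 2)|$ for $n\geq 5$ is precisely Theorem~\ref{th:123greater}, which in turn rests on Lemmas~\ref{lemma:key} and~\ref{lemma:almost}. For $\operatorname{dist}_1(123)$ versus $\operatorname{dist}_1(132)$, the inequality $|\operatorname{Av}_n(\operatorname{dist}_1(123))|>|\operatorname{Av}_n(\operatorname{dist}_1(132))|$ for $n>5$ is Theorem~\ref{th:123IsLargest}, the corollary of Hopkins and Weiler. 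For the consecutive pair $\underline{123}$ versus $\underline{132}$, the strict inequality holds for all $n\geq 4$ by the result of Elizalde~\cite{Elizalde2003}. Thus three of the four comparisons go in the direction ``$X$ beats $Y$'', each valid at least on the range $n>5$.

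The remaining pair, $1\square\underline{23}$ versus $1\square\underline{32}$, is the advertised exception: Theorem~\ref{th:123less} gives $|\operatorname{Av}_n(1\square\underline{23})|<|\operatorname{Av}_n(1\square\underline{32})|$ for $n>4$, so here the inequality reverses. Intersecting the four ranges of validity ($n\geq 5$, $n>5$, $n\geq 4$, $n>4$) yields $n>5$, the uniform range stated in the corollary. There is essentially no new obstacle at this stage: all the substantive work lives in the proofs of Theorems~\ref{th:123less} and~\ref{th:123greater} and in the two cited external theorems. The only points requiring care are (i) checking that transposing $2$ and $3$ sends each listed member of $X$ exactly to the listed member of $Y$, so that the four comparisons are genuinely the four pairs obtained by this operation and none is an artifact of the reverse/complement symmetries already used to build the Wilf classes in Table~\ref{table:1}, and (ii) confirming that the direction of the inequality flips for precisely the one pair $1\square\underline{23}\leftrightarrow 1\square\underline{32}$ and no other.
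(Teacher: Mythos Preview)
Your proposal is correct and matches the paper's own treatment: the paper does not supply a separate proof but simply states that the corollary follows from Theorem~\ref{th:123greater}, Theorem~\ref{th:123less}, the consecutive-pattern result of Elizalde~\cite{Elizalde2003}, and Theorem~\ref{th:123IsLargest} (Hopkins--Weiler). Your added bookkeeping on the explicit pairing and on intersecting the four ranges of validity to obtain $n>5$ is a helpful elaboration of exactly that one-line derivation.
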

%\begin{corollary}\label{cor:venn}
%$1\square \underline{23}$ is the only pattern among those in the set $X = \{1\square\underline{23},\underline{12}\square 3,1\square 2 \square 3,\underline{123}\}$ which is avoided by fewer permutations of size $n$, compared to the same pattern after we switch the places of 2 and 3, i.e., its counterpart in the set $Y = \{1\square\underline{32}$, $\underline{13}\square 2$, $1\square 3 \square 2$ and $\underline{132}\}$.
%\end{corollary}
%Figure \ref{fig:venn} depicts the sets of permutations containing each of the patterns in $X$ and $Y$ as a Venn diagram.

% Corollary \ref{cor:venn} is somewhat surprising since each occurrence of the classical pattern $123$ (resp.$132$) is an occurrence of a pattern in $X$ (resp. $Y$) and as we know, $|\operatorname{Av}_{n}(123)| = |\operatorname{Av}_{n}(132)|$ (\cite{simion}).

 \begin{figure}[h!]
      \centering
        \begin{tikzpicture}[scale=0.85]
  \draw (1,0) circle (1.5);
  \node[above] at (0.6,0.2) {$C_{n}(\underline{123})$}; 
  \filldraw[fill = {rgb:black,1;white,10}] (3,0) circle (1.5);
  \node[above] at (3.3,0.2) {\resizebox{0.1\hsize}{!}{$C_{n}(1\square\underline{23})$}}; 
  \draw (1,-1.8) circle (1.5);
  \node[below] at (0.6,-2) {\resizebox{0.1\hsize}{!}{$C_{n}(\underline{12}\square 3)$}}; 
  \draw (3,-1.8) circle (1.5);
  \node[below] at (3.4,-2) {\resizebox{0.1\hsize}{!}{$C_{n}(1\square 2 \square 3)$}}; 

    \draw (6.5,0) circle (1.5);
  \node[above] at (6.1,0.2) {$C_{n}(\underline{132})$}; 
  \filldraw[fill = {rgb:black,1;white,10}] (8.5,0) circle (1.5);
  \node[above] at (8.8,0.2) {\resizebox{0.1\hsize}{!}{$C_{n}(1\square\underline{32})$}}; 
  \draw (6.5,-1.8) circle (1.5);
  \node[below] at (6.1,-2) {\resizebox{0.1\hsize}{!}{$C_{n}(\underline{13}\square 2)$}}; 
  \draw (8.5,-1.8) circle (1.5);
  \node[below] at (8.9,-2) {\resizebox{0.1\hsize}{!}{$C_{n}(1\square 3 \square 2)$}}; 
    \end{tikzpicture}
  \caption{Venn diagrams for the $n$-permutations containing $123$ and $132$}
  \label{fig:venn}
\end{figure}

Figure \ref{fig:venn} depicts the sets of permutations containing each of the patterns in $X$ and $Y$ as a Venn diagram. Corollary \ref{cor:venn} is somewhat surprising since each occurrence of the classical pattern $123$ (resp.$132$) is an occurrence of a pattern in $X$ (resp. $Y$) and as it was shown in \cite{simion}, $|\operatorname{Av}_{n}(123)| = |\operatorname{Av}_{n}(132)|$. Thus the total "area" of the union of the four sets on the left is the same as the total "area" of the union of the four sets on the right. However, each of the three unmarked sets on the left contains fewer elements than its counterpart on the right. 

\subsection{Consecutive distant patterns} \label{sec:consecutiveDP} 
Recall that when all the constraints for the  gap sizes in a distant pattern are tight, then we call these patterns consecutive distant patterns and we underline the whole pattern to denote that. Considering POGP, Kitaev mentioned in the introduction of \cite{Kit07} that $\operatorname{Av}_{n}(\underline{1 \square 2}) = \binom{n}{\lfloor\frac{n}{2}\rfloor}$. Indeed, we may use that the letters in the odd and the even positions of a permutation avoiding this pattern do not affect each other. Thus we can choose the letters in odd positions in $\binom{n}{\lfloor\frac{n}{2}\rfloor}$ ways, and we must arrange them in decreasing order. We then must arrange the letters in even positions in decreasing order, too. Using the same reasoning one can easily find, for example $\operatorname{Av}_{n}(\underline{1\square^{2}2})$ or $\operatorname{Av}_{n}(\underline{1 \square 2 \square 3})$. This can be further generalized by the fact given below. Recall that if $q = q_{1}q_{2}\cdots q_{k}$ is a classical pattern of size $k$, then $\underline{q} = \underline{q_{1}q_{2}\cdots q_{k}}$ is the corresponding consecutive pattern. We also use  $\underline{\operatorname{dist}_{r}(q)}$ to denote the corresponding consecutive distant pattern $\underline{q_{1}\square^{r}q_{2}\square^{r}\cdots \square^{r}q_{k}}$.
\begin{theorem}\cite[Theorem 11]{Kit05Main}
For a given classical pattern $q$ of size $k$, given distance $r\geq 0$ and a natural $n$, denote $l = \lfloor\frac{n}{r+1}\rfloor$. Set $u \coloneqq n \mod (r+1) \in [0,r]$. Then
\begin{equation} \label{eq:APs}
|\operatorname{Av}_{n}(\underline{\operatorname{dist}_{r}(q)})|=\frac{n!}{(l!)^{r+1-u}((l+1)!)^{u}}|A_{l}(\underline{q})|^{r+1-u}|A_{l+1}(q)|^{u}.    
\end{equation}
\end{theorem}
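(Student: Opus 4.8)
The plan is to exploit the rigidity of the position set of any occurrence of the consecutive distant pattern $\underline{\operatorname{dist}_{r}(q)} = \underline{q_{1}\square^{r}q_{2}\square^{r}\cdots \square^{r}q_{k}}$. Since every gap is tight and equal to $r$, an occurrence must occupy positions $p,\ p+(r+1),\ p+2(r+1),\ \dots,\ p+(k-1)(r+1)$, i.e.\ a set of positions lying in a single residue class modulo $r+1$. So I would first split $[n]$ into the $r+1$ residue classes $R_{1},\dots,R_{r+1}$ modulo $r+1$ (say $R_{j}$ consisting of the positions $\equiv j$); writing $n=l(r+1)+u$ with $0\le u\le r$, exactly the $u$ classes $R_{1},\dots,R_{u}$ have size $l+1$ and the remaining $r+1-u$ classes have size $l$.

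The first key step is the equivalence: a permutation $\pi\in S_{n}$ avoids $\underline{\operatorname{dist}_{r}(q)}$ if and only if, for each $j$, the flattening $\textsf{fl}(\pi|_{R_{j}})$ of the subword of $\pi$ read along the positions of $R_{j}$ avoids the consecutive pattern $\underline{q}$. For the ``only if'' direction, two positions that are consecutive inside $R_{j}$ differ by exactly $r+1$ in $\pi$, so an occurrence of $\underline{q}$ among $k$ consecutive entries of $\pi|_{R_{j}}$ pulls back to an occurrence of $\underline{\operatorname{dist}_{r}(q)}$ in $\pi$; for the ``if'' direction, any occurrence of $\underline{\operatorname{dist}_{r}(q)}$ in $\pi$ occupies positions $p,p+(r+1),\dots,p+(k-1)(r+1)$, which all lie in one $R_{j}$, hence yields $k$ consecutive entries of $\pi|_{R_{j}}$ whose flattening is $\underline{q}$. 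Crucially, entries sitting in distinct residue classes are subject to \emph{no} mutual constraint, because the pattern can never span two classes.

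The second step is to count using this equivalence. A $\underline{\operatorname{dist}_{r}(q)}$-avoider $\pi$ is the same datum as: (i) an ordered partition $(B_{1},\dots,B_{r+1})$ of $[n]$ recording the set $B_{j}$ of values occupying the positions of $R_{j}$, with $|B_{j}|$ equal to the prescribed size of $R_{j}$; and (ii) for each $j$, a linear arrangement of $B_{j}$ along $R_{j}$ whose flattening avoids $\underline{q}$. The number of choices in (i) is the multinomial coefficient $\dfrac{n!}{(l!)^{\,r+1-u}\,((l+1)!)^{u}}$. For (ii), since avoidance of a consecutive pattern depends only on relative order, any fixed $m$-element set of values admits exactly $|\operatorname{Av}_{m}(\underline{q})|=|A_{m}(\underline{q})|$ valid arrangements; applying this to the $u$ blocks of size $l+1$ and the $r+1-u$ blocks of size $l$ contributes the factor $|A_{l}(\underline{q})|^{\,r+1-u}|A_{l+1}(\underline{q})|^{u}$. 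Multiplying the counts for (i) and (ii) yields \eqref{eq:APs}. (The formula also covers the degenerate regime $(r+1)k>n$ automatically: then $l<k$ forces $|A_{l}(\underline{q})|=l!$, and when moreover $l+1<k$ also $|A_{l+1}(\underline{q})|=(l+1)!$, so the right-hand side collapses to $n!$.)

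The only point demanding care — rather than a genuine obstacle — is the elementary bookkeeping of residue-class sizes (verifying that it is precisely $u$ of the classes, namely those represented by $1,\dots,u$, that have size $l+1$) and the observation underlying (ii), that the number of $\underline{q}$-avoiding arrangements of a block depends only on the block size and not on which $m$-element value set it is. Both are immediate but should be stated explicitly; everything else is a direct translation of the positional rigidity of $\underline{\operatorname{dist}_{r}(q)}$ into a product decomposition over the residue classes.
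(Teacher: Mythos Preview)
Your proof is correct and follows exactly the approach the paper sketches just before stating the theorem (splitting the positions into residue classes modulo $r+1$, noting that the classes do not interact, and then choosing the value set for each class via a multinomial and arranging each class independently as a $\underline{q}$-avoider). The paper itself does not give a full proof but cites \cite{Kit05Main}; your write-up fills in precisely the details one would expect from that sketch, including the bookkeeping on class sizes and the degenerate case.
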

This gives us a formula for the size of the set of permutations avoiding any consecutive distant pattern, knowing the size of the avoidance set for the corresponding classical consecutive pattern. Corollaries of this simple fact were previously stated in \cite{thesis, MansourPolygons}. We state another simple corollary here, which shows a surprising relationship between the former fact and avoidance of arithmetic progressions in permutations.
\begin{theorem}
The number of permutations of size $n$ avoiding arithmetic progressions of length $k>1$ and difference $r>0$ is $|\operatorname{Av}_{n}(\underline{\operatorname{dist}_{r}(12\cdots k)})|$, which can be obtained using equation \eqref{eq:APs}.
\end{theorem}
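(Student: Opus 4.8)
The plan is to recognize the statement as a change of language followed by a one-line appeal to Kitaev's theorem above (equation~\eqref{eq:APs}). Fix $n$, $k>1$, $r>0$, and write $\pi=\pi_{1}\pi_{2}\cdots\pi_{n}$. Unwinding the definition, a length-$k$ arithmetic progression of difference $r$ in $\pi$ is a choice of positions $p_{1}<p_{2}<\cdots<p_{k}$ forming an arithmetic progression with $p_{j+1}-p_{j}=r+1$ for all $j$ (equivalently, exactly $r$ positions of $\pi$ lie strictly between consecutive chosen positions) for which the selected entries increase, $\pi_{p_{1}}<\pi_{p_{2}}<\cdots<\pi_{p_{k}}$. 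So the first step is simply to match this configuration with the distant-pattern vocabulary of the paper.

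I would then observe that such a configuration is precisely an occurrence of the consecutive distant pattern $\underline{1\square^{r}2\square^{r}\cdots\square^{r}k}=\underline{\operatorname{dist}_{r}(12\cdots k)}$. Indeed, by the meaning of a tight gap constraint, each underlined block $\square^{r}$ between two consecutive pattern letters encodes exactly the requirement $p_{j+1}-p_{j}=r+1$, while the underlying classical pattern $12\cdots k$ encodes exactly $\pi_{p_{1}}<\cdots<\pi_{p_{k}}$. Hence $\pi$ contains no length-$k$ arithmetic progression of difference $r$ if and only if $\pi\in\operatorname{Av}_{n}(\underline{\operatorname{dist}_{r}(12\cdots k)})$, which is the first claimed equality. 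It is worth noting that one really needs ``arithmetic progression'' to mean the monotone \emph{increasing} configuration here; were decreasing progressions also forbidden, one would have to avoid the reverse pattern as well and a single distant pattern would no longer capture the count.

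For the remaining assertion I would specialize Kitaev's theorem above, taking $q=12\cdots k$, so that $\underline{q}=\underline{12\cdots k}$ is the increasing consecutive pattern of length $k$ and $A_{m}(\underline{q})$ is the set of permutations of $[m]$ whose longest increasing run has length at most $k-1$. With $l=\lfloor n/(r+1)\rfloor$ and $u=n\bmod(r+1)$, equation~\eqref{eq:APs} then expresses $|\operatorname{Av}_{n}(\underline{\operatorname{dist}_{r}(12\cdots k)})|$ through the known quantities $|A_{l}(\underline{12\cdots k})|$ and $|A_{l+1}(\underline{12\cdots k})|$, which together with the first equality finishes the proof. The only genuine care needed is the bookkeeping in the translation step — fixing the convention so that ``difference $r$'' corresponds to $r$ squares, hence to position step $r+1$, and confirming that degenerate ranges behave: for instance when $n\le(k-1)(r+1)$ no permutation of size $n$ admits such a progression, and correspondingly $\underline{12\cdots k}$ is avoided by every permutation of the relevant sizes, so \eqref{eq:APs} collapses to $n!$, as it should. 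Beyond that there is no obstacle: the first equality is essentially a tautology and the second is a direct specialization of an already-established result.
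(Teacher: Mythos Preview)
Your argument rests on a misreading of what ``arithmetic progression of difference $r$ in a permutation'' means. You take it to be a choice of \emph{positions} $p_{1}<\cdots<p_{k}$ in arithmetic progression with increasing entries; under that reading the identification with $\underline{\operatorname{dist}_{r}(12\cdots k)}$ is indeed a tautology, which is why your proof felt like a mere change of language. But the intended (and standard) meaning, which the paper makes explicit, is a subsequence whose \emph{values} form an arithmetic progression: indices $i_{1}<\cdots<i_{k}$ with $\pi_{i_{1}}=x,\ \pi_{i_{2}}=x+r,\ \ldots,\ \pi_{i_{k}}=x+(k-1)r$. Under this reading there is a genuine step, and your argument does not supply it: an occurrence of $\underline{\operatorname{dist}_{r}(12\cdots k)}$ constrains positions, not values, so one cannot simply equate the two avoidance conditions on the same permutation $\pi$.

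The paper closes this gap via the inverse map. If $\pi$ has a value-AP as above, then in $\pi^{-1}$ the entries at the equally spaced positions $x,\,x+r,\,\ldots,\,x+(k-1)r$ are $i_{1}<i_{2}<\cdots<i_{k}$, which is exactly an occurrence of $\underline{\operatorname{dist}_{r}(12\cdots k)}$; and conversely. Since $\pi\mapsto\pi^{-1}$ is a bijection on $S_{n}$, the counts agree. Your ad hoc convention that ``difference $r$'' should translate to a position step of $r+1$ is a symptom of the misreading: once the AP is on values and one passes to the inverse, the position step is $r$ on the nose, matching the $r$ squares between consecutive letters. So the missing idea is precisely this inversion argument; the appeal to equation~\eqref{eq:APs} afterwards is fine.
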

\begin{proof}
Consider $\pi\in S_{n}$, containing an arithmetic progression (AP) $\pi_{i_{1}}\pi_{i_{2}}\cdots \pi_{i_{k}}$ of size $k$ and difference $r>0$. I.e., we have $\pi_{i_{1}} = x$, $\pi_{i_{2}} = x+r$,$\cdots$, $\pi_{i_{k}} = x+(k-1)r$ for some $x,r\in [n]$ with $i_{1}<i_{2}<\cdots <i_{k}$. Then in the inverse permutation $\pi^{-1}$, $i_{1}i_{2}\cdots i_{k}$ will be an occurrence of the distant pattern $\underline{\operatorname{dist}_{r}(12\cdots k)}$ since $\pi^{-1}(x) = i_{1}$, $\pi^{-1}(x+r) = i_{2}$, $\cdots$, $\pi^{-1}(x+(k-1)r) = i_{k}$. Conversely, if $\pi\in S_{n}$ contains $\underline{\operatorname{dist}_{r}(12\cdots k)}$, then $\pi^{-1}$ contains an AP of length $k$ and difference $r>0$. Therefore, the number of permutations of $[n]$ containing APs of length $k$ and difference $r>0$ equals the number of permutations of $[n]$ containing $\underline{\operatorname{dist}_{r}(12\cdots k)}$. This implies the same for the set of avoiders, i.e., what we aim to prove.
\end{proof}

% \subsection{Other patterns}
% Here is a table describing the current status for all vincular DPs of size 3.
% \begin{table}[h!]
%  \begin{tabular}{ |p{2cm}||p{5cm}|p{3cm}|  }
%  \hline
%  Pattern& Initial values of $|\operatorname{Av}_{n}(q)|$, $n\geq 4$ &Source(if solved)\\
%  \hline
%  \hline
%  $1\square\underline{23}$   &  $20,75,316,1464,7359,39815, \cdots$   & Theorem \ref{th:vinc123}\\
%  \hline
%  \hline
%  $\underline{12}\square3$   &  $20,75,316,1464,7359,39815, \cdots$   & Theorem \ref{th:vinc123}\\
%  \hline
%  $1\square \underline{32}$    &  $20,76,326,1544,7954,44164,\cdots$   & Theorem \ref{th:vinc132}\\
%  \hline
%  $\underline{13}\square 2$ &  A04912 &
%   Theorem \ref{th:vinc13sq2} \\
%  \hline
%   $\underline{12\square 3}$ &$20,83,411,2290,14588,104488,\cdots$     &
%   unsolved \\
%  \hline
%  $\underline{13\square 2}$ &$20,81,390,2161,13678,96983,\cdots$     &
%   unsolved \\
%  \hline
%  $\underline{23\square 1}$ &$20,83,402,2245,14192,100650,\cdots$     &
%   unsolved \\
%  \hline
%  $1\underline{2\square 3}$ &$20,76,320,1471,7345,39425,\cdots$     &
%   unsolved \\
%  \hline
%  $1\underline{3\square 2}$ &$20,76,326,1540,7941,44198,\cdots$     &
%   unsolved \\
%  \hline
%  $2\underline{3\square 1}$ &$20,75,294,1218,5172,22671,\cdots$     &
%   unsolved \\
%  \hline
% \end{tabular}
% \caption{Vincular distant patterns of size 3}
% \end{table}

\section{Interpretations of other results}\label{sec:conj}
Here, we will demonstrate that DPs can be very useful when interpreting already known results (including ones obtained with a computer). One previous work that gives several conjectures about the enumeration of pattern-avoiding classes containing many size four patterns is the work of Kuszmaul \cite{Kuszmaul}. He listed ten conjectures about simultaneous pattern avoidance of many size four patterns. One can find brief solutions, using both computer programs and manual work, to many of these conjectures in the two articles of Mansour and Schork \cite{MansSchork8,MansSchork67}.

 Below, we give direct solutions to two of the conjectures without using a computer. To do that, we interpret the respective big set of size four patterns as a smaller set of both classical and distant patterns. Our approach is similar to the technique introduced in \cite{MansCell}.
\begin{theorem} (conjectured in \cite{Kuszmaul}, p.20, sequence 6)
\label{th:conj6}
The generating function of
\begin{equation*}
    |\operatorname{Av}_{n}(2431,2143,3142,4132,1432,1342,1324,1423,1243)|
\end{equation*} is given by $C+x^{3}C$, where $C$ is the generating function for the Catalan numbers.
\end{theorem}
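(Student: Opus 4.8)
The plan is to reinterpret the nine forbidden patterns and then split $\operatorname{Av}_n$ of this set into two transparent pieces. The first step is the finite observation that these nine permutations are exactly the ten elements of $S_4$ that contain the classical pattern $132$, with the single permutation $2413$ deleted. (Four of them, $\{2143,1243,1342,1432\}$, are precisely the size-$4$ classical patterns equivalent to the distant pattern $1\square 32$ in the sense of Theorem~\ref{th:simultAv}, and the other five, $\{2431,3142,4132,1324,1423\}$, complete the list of $132$-containing patterns in $S_4$ apart from $2413$.) Hence $\pi\in S_n$ avoids all nine patterns if and only if every four positions of $\pi$ induce a size-$4$ pattern that is either $132$-avoiding or equal to $2413$. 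In particular $\operatorname{Av}_n(132)\subseteq\operatorname{Av}_n(\text{the nine patterns})$, which will account for the summand $C$, and the remaining work is to count the permutations of size $n$ that avoid the nine patterns but do contain $132$; I will show there are exactly $C_{n-3}$ of them. The cases $n\le 3$ are handled by hand --- all of $S_n$ works and $|S_n|=C_n+C_{n-3}$ with the convention $C_m=0$ for $m<0$.

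For $n\ge 4$, let $\pi$ avoid the nine patterns and contain $132$, say at positions $i<j<k$ with $\pi_i<\pi_k<\pi_j$. For any fourth position $l$, the four positions $\{i,j,k,l\}$ induce a $132$-containing size-$4$ pattern, so by the first step they must induce $2413$; since the only occurrence of $132$ inside $2413$ uses its positions $1,2,4$, the order-preserving identification sends $i,j,k$ there and $l$ to position $3$. This forces $j<l<k$ and, because position $3$ of $2413$ carries the minimal value, $\pi_l<\pi_i$. As $l$ was arbitrary, every position other than $i,j,k$ lies strictly between $j$ and $k$, which is possible only if $i=1$, $j=2$, $k=n$; moreover $\pi_3,\dots,\pi_{n-1}$ are all below $\pi_1$, which forces $\pi_1=n-2$, $\pi_2=n$, $\pi_n=n-1$, and $\{\pi_3,\dots,\pi_{n-1}\}=\{1,\dots,n-3\}$. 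Finally $\pi_3\cdots\pi_{n-1}$ must avoid $132$, since a $132$ inside it would be a second $132$-occurrence of $\pi$, contradicting uniqueness of the one at $1,2,n$. Conversely, I would check that every $\pi$ of the form $(n-2)\,n\,\sigma\,(n-1)$, where $\sigma$ is a $132$-avoiding permutation of the set $\{1,\dots,n-3\}$ placed in positions $3,\dots,n-1$, avoids the nine patterns: any four of its positions inducing a $132$-containing pattern must include $\{1,2,n\}$, and the induced pattern then works out to be exactly $2413$. Thus these exceptional avoiders of size $n$ are in bijection with $\operatorname{Av}_{n-3}(132)$, so their number is $C_{n-3}$.

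Putting the pieces together, $|\operatorname{Av}_n(\text{the nine patterns})|=C_n+C_{n-3}$ for every $n\ge 0$, and summing over $n$ gives $\sum_{n}C_nx^n+x^3\sum_{n}C_nx^n=C+x^3C$, as desired.

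The finite verification in the first step is routine. The main obstacle is the rigidity argument in the second paragraph: one must make sure the conclusion $\{i,j,k\}=\{1,2,n\}$ is forced for \emph{every} $132$-occurrence of $\pi$ (so that this occurrence is genuinely unique and the decomposition is well defined), and that the converse direction really inspects all $\binom{n}{4}$ four-element sub-patterns rather than only the conspicuous one.
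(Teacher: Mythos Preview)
Your proof is correct and reaches the same structural conclusion as the paper --- namely that every avoider of the nine patterns that contains $132$ must equal $(n-2)\,n\,\alpha\,(n-1)$ with $\alpha\in\operatorname{Av}_{n-3}(132)$ --- but the route is genuinely different. The paper rewrites the nine patterns as $\{\square 132,\ 132\square,\ 1342\}$ in the distant-pattern notation of the article; from $\square 132$ and $132\square$ it reads off immediately that any $132$-occurrence must begin at position $1$ and end at position $n$, then locates the letter $n$ and uses $1342$-avoidance (together with two further forbidden-$132$ arguments) to empty three segments of the diagram. You instead observe that the nine patterns are exactly the $132$-containing elements of $S_4$ with $2413$ removed, and exploit the fact that $2413$ has a \emph{unique} $132$-subpattern (at its positions $1,2,4$) to force every fourth position $l$ to land strictly between $j$ and $k$ with $\pi_l<\pi_i$; this pins down $i=1$, $j=2$, $k=n$ and the values $\pi_1=n-2$, $\pi_2=n$, $\pi_n=n-1$ in one stroke, and the uniqueness of the $132$-occurrence then forces $\alpha$ to avoid $132$. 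Your argument is self-contained and does not rely on the distant-pattern framework, while the paper's version is meant precisely to illustrate that framework's usefulness for compressing large simultaneous-avoidance sets. Both approaches yield the same decomposition and hence the same generating function $C+x^{3}C$.
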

\begin{proof}
Note that the set of patterns above can be written as 
\begin{equation*}
\Pi = \{\square132, 132\square, 1342\}   
\end{equation*}
When $n=1,2,3$, $|\operatorname{Av}_{n}(\Pi)| = 1,2,6$ respectively and these are indeed the first three coefficients of $C+x^{3}C$. Consider values $n\geq 4$. If $\sigma\in \operatorname{Av}_{n}(132)$, then $\sigma\in \operatorname{Av}_{n}(\Pi)$. As we know, $|\operatorname{Av}_{n}(132)|$ has generating function $C$ \cite{knuth}.
It remains to find the generating function for those $\sigma$ containing $132$, but avoiding $\Pi$. Take one such $\sigma = \sigma_{1}\sigma_{2}\cdots \sigma_{n}$ and notice that any occurrence of $132$ in $\sigma$ must have $\sigma_{1}$ as its first letter and $\sigma_{n}$ as its last letter. Otherwise, given that $n\geq 4$, an occurrence of at least one of the patterns $132\square$ or $\square 132$ will be present. Now, let $\sigma_{k} = n$ be the biggest element of $\sigma$. Clearly, $\sigma_{1}\sigma_{k}\sigma_{n}$ must be an occurrence of $132$. If not, then this biggest element must be either at the first or the last position in $\sigma$ and thus $\sigma$ would not contain any $132$-occurrences that either begin at $\sigma_{1}$ or end at $\sigma_n$. In Figure \ref{fig:conj6} are shown three black points representing $\sigma_1$, $\sigma_k$ and $\sigma_{n}$, as well as three segments of the diagram of $\sigma$ denoted with $A$, $B$ and $C$ and defined below. We further show that $\sigma$ will not contain any elements in these three segments. Here is why:
\begin{itemize}
    \item $A$ is empty \\
    There is no element $x$ among $\sigma_{2},\sigma_{3},\cdots ,\sigma_{k-1}$, such that $x<\sigma_{n}$. If there is such $x$, then $x\sigma_{k}\sigma_{n}$ would be a forbidden occurrence of $132$.
    \item $B$ is empty \\
    There is no element $x$ among $\sigma_{k+1},\sigma_{k+2},\cdots ,\sigma_{n-1}$, such that $\sigma_{1}<x<\sigma_{k}$. If there is such $x$, then $\sigma_{1}\sigma_{k}x$ would be a forbidden occurrence of $132$.
    \item $C$ is empty \\
    There is no element $x$ among $\sigma_{1},\sigma_{2},\cdots ,\sigma_{k-1}$, such that $\sigma_{n}<x<\sigma_{k}$. If there is such $x$, then $\sigma_{1}x\sigma_{k}\sigma_{n}$ would be an occurrence of $1342$.
\end{itemize}
Therefore, the biggest element $\sigma_{k}$ in $\sigma$ must be at position $2$, i.e., $k=2$ and the only non-empty segment could be the one denoted by $\alpha$ in figure \ref{fig:conj6}. In other words, we must have $\sigma=(n-2)n\alpha(n-1)$, for some permutation $\alpha\in \operatorname{Av}(132)$. Otherwise, an occurrence of $132$, such that $\sigma_{1}= n-2$ is not part of it, would be formed. Conversely, for any $\alpha\in \operatorname{Av}(132)$, $\sigma = (n-2)n\alpha(n-1)$ belongs to $\operatorname{Av}(\Pi)$.

\begin{figure}
            \centering
        \begin{tikzpicture}[scale=0.8]
    \node[below] at (-3,2.75) {$\sigma_{1}$}; 
    \filldraw [black] (-3,3) circle (2.5pt);
    \node[above] at (0,7.25) {$\sigma_{k}$}; 
    \filldraw [black] (0,7) circle (2.5pt);
    \node[below] at (3,4.75) {$\sigma_{n}$}; 
    \filldraw [black] (3,5) circle (2.5pt);
    
    \draw[dotted] (-3,3) -- (3,3);
    \draw[dotted] (-3,7) -- (3,7);
    \draw[dotted] (-3,5) -- (3,5);
    \draw[dotted] (0,7) -- (0,1.5);
    
    \draw[draw=gray] (-2.5,1.5) rectangle ++(2,3.25);
    \draw (-2.5,1.5) -- (-0.5,4.75);
    \draw (-0.5,1.5) -- (-2.5,4.75);
    \node at (-1.5,4.4) {\large{A}}; 
    
    \draw[draw=gray] (-2.5,5.25) rectangle ++(2,1.5);
    \draw (-2.5,5.25) -- (-0.5,6.75);
    \draw (-0.5,5.25) -- (-2.5,6.75);
    \node at (-1.5,6.4) {\large{C}}; 
    
    \draw[draw=gray] (0.5,3.25) rectangle ++(2,3.5);
    \draw (0.5,3.25) -- (2.5,6.75);
    \draw (2.5,3.25) -- (0.5,6.75);
    \node at (1.5,6.35) {\large{B}};  
    
    \draw[draw=gray] (0.5,1.5) rectangle ++(2,1.25);
    % \draw (-2.5,1.5) -- (-0.5,4.5);
    % \draw (-0.5,1.5) -- (-2.5,4.5);
    \node at (1.5,2.5) {\large{$\alpha$}};

    \end{tikzpicture}
         \caption{Decomposition for $\sigma\in \operatorname{Av}( \Pi )$ from the proof of Theorem \ref{th:conj6}}
        \label{fig:conj6}
        \end{figure}

% \begin{figure}
%     \centering
%     \includegraphics[scale = 0.2]{conj6b.png}
%     \caption{Decomposition for $\sigma\in \operatorname{Av}( \Pi )$ from Th.\ref{th:conj6}}
% \label{fig:conj6}
% \end{figure}

Then, we get $x^{3}C$ for the generating function of the permutations in $\operatorname{Av}(\Pi)$ containing $132$ and therefore we will have $C+x^{3}C$ for the generating function of $\operatorname{Av}(\Pi)$, since $C$ is the generating function for $\operatorname{Av}(132)$. 
\end{proof}
As we know, $C = 1+xC^{2}$, so we can write
\begin{equation*}
C+x^{3}C = C+x^{3}(1+xC^{2}) = x^{3} + C(1+x^{4}C)
\end{equation*}
and this indeed corresponds to sequence A071742 given by $C(1+x^{4}C)$, as reported in \cite{Kuszmaul}, with the subtle difference that for $n=3$, we have one extra permutation in $\operatorname{Av}_{3}(\Pi)$, namely $132$. The same structure for the decomposition of the permutations in $\operatorname{Av}(\Pi)$ was also found recently with a computer by Bean et al. who used a particular algorithm called the \emph{Struct algorithm} \cite{bean}. As we saw, rewriting the problem in terms of distant patterns helped us to prove the result directly and to give an interpretation of the already discovered decomposition. 

Below, we will give a direct proof to another former conjecture listed in \cite{Kuszmaul}.
\begin{theorem} (conjectured in \cite{Kuszmaul}, p.19, sequence 5)
\label{th:conj5}
The generating function of
\begin{equation*}
    |\operatorname{Av}_{n}(2431,2413,3142,4132,1432,1342,1324,1423)|
\end{equation*} is given by $C(1+x^{3}C)$, where $C$ is the generating function for the Catalan numbers.
\end{theorem}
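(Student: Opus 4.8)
The plan is to imitate the proof of Theorem~\ref{th:conj6}: rewrite the eight size-four patterns as a short list of classical and distant patterns, and then split the avoidance set according to whether the classical pattern $132$ is present. Throughout write $\Pi=\{2431,2413,3142,4132,1432,1342,1324,1423\}$.

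First I would record the rewriting. Using Theorem~\ref{th:simultAv}, the size-four classical patterns equivalent to $132\square$ are $\{2431,1432,1423,1324\}$ and those equivalent to $13\square 2$ are $\{2413,1423,1432,1342\}$; the union of these two sets together with $\{3142,4132\}$ is exactly $\Pi$. Hence a permutation $\sigma$ of size $n$ avoids $\Pi$ if and only if both of the following hold: (i) $\sigma$ avoids $132\square$ and $13\square 2$, equivalently, every occurrence $abc$ of $132$ in $\sigma$ uses the two final positions $n-1$ and $n$ for the entries playing the roles of $3$ and $2$; and (ii) $\sigma$ avoids $3142$ and $4132$, equivalently, no occurrence of $132$ in $\sigma$ has, strictly to its left, an entry larger than the last entry of that occurrence. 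Since each of the eight patterns contains $132$, every permutation avoiding $132$ lies in $\operatorname{Av}(\Pi)$, and these contribute the Catalan generating function $C$; it therefore remains to enumerate the $\sigma\in\operatorname{Av}(\Pi)$ that contain $132$.

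Next I would pin down the structure of such a $\sigma$. By (i) we get $\sigma_{n-1}>\sigma_n=:m$, and every occurrence of $132$ in $\sigma$ has the form $(\sigma_j,\sigma_{n-1},\sigma_n)$ with $j\le n-2$ and $\sigma_j<m$; such $j$ exist because $\sigma$ contains $132$, so $m\ge 2$. Letting $j^{*}$ be the largest such $j$, condition (ii) applied to the occurrence at $j^{*}$ forces $\sigma_1,\dots,\sigma_{j^{*}}$ all to be $<m$, while maximality of $j^{*}$ forces $\sigma_{j^{*}+1},\dots,\sigma_{n-1}$ all to be $>m$; counting the values $<m$ and $>m$ then gives $j^{*}=m-1$, so
\[
\sigma=\beta\,\gamma\,m,\qquad\text{$\beta$ a permutation of }\{1,\dots,m-1\},\quad\text{$\gamma$ a permutation of }\{m+1,\dots,n\},
\]
with $\beta$ and $\gamma$ both nonempty. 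If $|\gamma|\ge 2$, then an entry of $\gamma$ not in the last position of $\gamma$, together with any entry of $\beta$ and with $\sigma_n=m$, is an occurrence of $13\square 2$, a contradiction; hence $|\gamma|=1$, i.e.\ $\sigma=\beta\,n\,(n-1)$ with $\beta$ a permutation of $\{1,\dots,n-2\}$. Such a $\beta$ must avoid $132$ (an occurrence of $132$ inside $\beta$ followed by any later entry would be an occurrence of $132\square$), and conversely every $\sigma=\beta\,n\,(n-1)$ with $\beta\in\operatorname{Av}_{n-2}(132)$ contains $132$ and satisfies (i) and (ii). So the $132$-containing members of $\operatorname{Av}(\Pi)$ of size $n$ correspond bijectively to $\operatorname{Av}_{n-2}(132)$, their generating function is $x^{2}(C-1)=x^{3}C^{2}$ (using $C=1+xC^{2}$), and the total is $C+x^{3}C^{2}=C(1+x^{3}C)$; the cases $n\le 3$ are checked by hand.

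The hard part will be the third paragraph: extracting the block decomposition $\sigma=\beta\gamma m$ from condition (ii) — the only ingredient of $\Pi$ that is not a pure distant pattern but only a two-pattern ``large filler before a $132$'' condition — and then eliminating the block $\gamma$ via $13\square 2$. Everything after $\sigma=\beta n(n-1)$ is established is routine, and, exactly as in Theorem~\ref{th:conj6}, the resulting generating function already matches the form conjectured in \cite{Kuszmaul}.
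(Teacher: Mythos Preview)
Your proof is correct and follows the same overall strategy as the paper: both show that the $\Pi$-avoiders containing $132$ are exactly the permutations $\alpha\,n\,(n-1)$ with $\alpha\in\operatorname{Av}(132)$ nonempty, giving $C+x^{2}(C-1)=C(1+x^{3}C)$. The only difference is in the rewriting of $\Pi$: the paper uses $\{13\square 2,\,1324,\,2431,\,3142,\,4132\}$ and then empties six regions $A$--$F$ one pattern at a time, whereas you additionally recognise that $\{1324,1423,1432,2431\}$ encodes $132\square$, so your condition~(i) pins the ``$3$'' and ``$2$'' of every $132$-occurrence to positions $n-1,n$ in one stroke and your condition~(ii) handles $3142,4132$ jointly; this makes your derivation of the block form $\beta\gamma m$ and the elimination of $\gamma$ somewhat shorter than the paper's region-by-region argument, but the underlying decomposition is identical.
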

\begin{proof}
Note that the set of permutations above can be written as 
\begin{equation*}
\Pi = \{13\square2, 1324, 2431, 3142, 4132\}.   
\end{equation*}
If $\sigma$ has no occurrences of $132$ at all, then obviously $\sigma\in \operatorname{Av}(\Pi)$ and the generating function for these permutations is $C$. Let us consider those $\sigma$ that have some occurrences of $132$ and are in $\operatorname{Av}(\Pi)$. The set $\Pi$ contains $13\square2$ thus all the occurrences of $132$ in $\sigma$, are occurrences of $1\underline{32}$. Take the occurrence $\sigma_{i}\sigma_{j}\sigma_{j+1}$ of $1\underline{32}$ that ends at the largest possible position, i.e., with $j$ maximal. Denote by $\alpha$ the segment in $\sigma$ of largest possible size that ends at $\sigma_{i}$ and such that $\alpha<\sigma_{j+1}<\sigma_{j}$. Let us first consider $\sigma' = \sigma_{j+2}\sigma_{j+3}\cdots \sigma_{n}$. We will show that $\sigma'$ is the empty permutation, i.e., $n=j+1$ and the segments $A$,$B$ and $C$, defined below and shown at Figure \ref{fig:conj5} are empty:
\begin{itemize}
    \item $A$ is empty \\
    There is no element $x$ in $\sigma'$, such that $x < \sigma_{j}$ and $x \nless \alpha$. If there is such $x$, then $\sigma_{i}\sigma_{j}x$ would be an occurrence of $132$ that is not an $1\underline{32}-$occurrence.
    \item $B$ is empty \\
    There is no element $x$ in $\sigma'$, such that $x<\alpha$. If there is such $x$, then $\sigma_{i}\sigma_{j}\sigma_{j+1}x$ would be an occurrence of 2431 which is not allowed.
    \item $C$ is empty \\
    There is no element $x$ in $\sigma'$, such that $x>\sigma_{j}$. If there is such $x$, then $\sigma_{i}\sigma_{j}\sigma_{j+1}x$ would be an occurrence of 1324 which is not allowed.
\end{itemize}
Next, let us consider the segment $\sigma'' = \sigma_{i+1}\cdots \sigma_{j-1}$. We will show that $\sigma''$ is the empty permutation, i.e., the segment $D$, shown at Figure \ref{fig:conj5}, is empty and $i=j-1$:
\begin{itemize}
    \item $D$ is empty \\
    There is no element $x$ in $\sigma''$, such that $x>\sigma_{j+1}$. If there is such $x$, then $\sigma_{i}x\sigma_{j+1}$ would be an occurrence of $132$ that is not an $1\underline{32}-$occurrence.
\end{itemize}

\begin{figure}
            \centering
        \begin{tikzpicture}[scale=0.75]
    \node at (0.25,7.25) {$\sigma_{j}$}; 
    \filldraw [black] (0,7) circle (2.5pt);
    \node at (0.75,6.25) {$\sigma_{j+1}$}; 
    \filldraw [black] (1,6) circle (2.5pt);
   
    \draw[dotted] (0,10) -- (0,2);
    \draw[dotted] (1,10) -- (1,2);
    \draw[dotted] (-6,7) -- (3,7);
    \draw[dotted] (-6,6) -- (1,6);
    
    \draw[draw=gray] (1.5,7.25) rectangle ++(1.5,2.25);
    \draw (1.5,7.25) -- (3,9.5);
    \draw (3,7.25) -- (1.5,9.5);
    \node at (2.25,9.25) {\large{C}}; 
    
    \draw[draw=gray] (1.5,4) rectangle ++(1.5,2.75);
    \draw (1.5,4) -- (3,6.75);
    \draw (3,4) -- (1.5,6.75);
    \node at (2.25,6.5) {\large{A}}; 
    
    \draw[dotted] (-4,3.75) -- (3,3.75);
    
    \draw[draw=gray] (1.5,1.25) rectangle ++(1.5,2.25);
    \draw (1.5,1.25) -- (3,3.5);
    \draw (3,1.25) -- (1.5,3.5);
    \node at (2.25,3.25) {\large{B}}; 
    
    \draw[draw=gray] (-4,4) rectangle ++(2,1.75);
    % \draw (1.5,1.25) -- (3,3.5);
    % \draw (3,1.25) -- (1.5,3.5);
    \node at (-3,5) {\large{$\alpha$}}; 
    
    \node at (-1.65,5) {$\sigma_{i}$}; 
    \filldraw [black] (-2,5) circle (2.5pt);
    
    \draw[draw=gray] (-5.75,6.075) rectangle ++(1.5,0.75);
    \draw (-5.75,6.075) -- (-4.25,6.825);
    \draw (-4.25,6.075) -- (-5.75,6.825);
    \node at (-5,6.4) {E}; 
    
    \draw[draw=gray] (-5.75,7.25) rectangle ++(1.5,0.75);
    \draw (-5.75,7.25) -- (-4.25,8);
    \draw (-4.25,7.25) -- (-5.75,8);
    \node at (-5,7.6) {F}; 
    
        \draw[draw=gray] (-1.75,6.075) rectangle ++(1.5,2.075);
    \draw (-1.75,6.075) -- (-0.25,8.15);
    \draw (-0.25,6.075) -- (-1.75,8.15);
    \node at (-1,7.75) {\large{D}};

    \end{tikzpicture}
    \caption{Decomposition for $\sigma\in \operatorname{Av}( \Pi )$ from the proof of Theorem \ref{th:conj5}.}
    \label{fig:conj5}
        \end{figure}

% \begin{figure}
%     \centering
%     \includegraphics[scale = 0.17]{conj5.png}
%     \caption{Decomposition for $\sigma\in \operatorname{Av}( \Pi )$ from Theorem \ref{th:conj5}}
% \label{fig:conj5}
% \end{figure}

Finally, consider the segment $\sigma'''$ that is the part of $\sigma$ in front of $\alpha$. We will show that $\sigma'''$ is the empty permutation, i.e., the segments E and F, shown at Figure \ref{fig:conj5} are empty:
\begin{itemize}
    \item E is empty \\
    There is no element $x$ in $\sigma'''$, such that $x > \sigma_{j+1}$ and $x < \sigma_{j}$. If there is such $x$, then $x\sigma_{i}\sigma_{j}\sigma_{j+1}$ would be an occurrence of $3142$, which is forbidden.
    \item F is empty \\
    There is no element $x$ in $\sigma'''$, such that $x > \sigma_{j}$. If there is such $x$, then $x\sigma_{i}\sigma_{j}\sigma_{j+1}$ would be an occurrence of $4132$, which is forbidden.
\end{itemize}
Therefore, we must have $\sigma = \alpha n(n-1)$, where $\alpha$ is non-empty and $\alpha \in \operatorname{Av}(132)$. The latter holds since if $\alpha$ contains 132 then after appending $\sigma_{j}$ at the end, we will get an occurrence of 1324.
Conversely, one may readily check that for each non-empty $\alpha\in \operatorname{Av}(132)$, $\sigma = \alpha n(n-1)$ would be a permutation in $\operatorname{Av}(\Pi)$.
Thus, the generating function of the number of permutations in $\operatorname{Av}(\Pi)$ is $C + x^{2}(C-1)$, since the generating function of such non-empty $\alpha\in \operatorname{Av}(132)$ is $C-1$. Furthermore, we have
\begin{equation*}
    C + x^{2}(C-1) = C + x^{2}xC^{2} = C(1+x^{3}C).
\end{equation*}
\end{proof}
The sequence of the coefficients for this generating function is given by A071726 in OEIS.

\section{Stanley--Wilf type conjectures for DPs}\label{sec:SW}
A popular former conjecture on the classical permutation patterns formulated independently by Stanley and Wilf states that for any given classical pattern $q$, there exists a constant $c_{q}$, such that $\sqrt[n]{|\operatorname{Av}_{n}(q)|} < c_{q}$, when $n\to \infty$. In 1999, Arratia (\cite{arratia}) observed that this is equivalent to the existence of the limit $\lim\limits_{n\to \infty}\sqrt[n]{|\operatorname{Av}_{n}(q)|}$. The conjecture was resolved in 2004 by Markos and Tardos (\cite{MarcusTardos}) who actually proved a conjecture of Füredi and Hajnal, which had been shown earlier to imply the Stanley–Wilf conjecture.

In Theorem \ref{th:simultAv}, we saw that the avoidance of every distant pattern is equivalent to simultaneous avoidance of several classical patterns. The Stanley--Wilf conjecture is true for any of these classical patterns. Thus we will have that $\sqrt[n]{|\operatorname{Av}_{n}(q)|} < const$, for any distant pattern $q$, when $n\to \infty$. Arratia's observation that $|\operatorname{Av}_{m+n}(q)|\geq |\operatorname{Av}_{m}(q)|.|\operatorname{Av}_{n}(q)|$, also holds for distant patterns, if the considered distant pattern does not start with a square. Thus for those kind of distant patterns, we can rely on the Fekete's lemma on subadditive sequences, exactly as Arratia did, to obtain that $\sqrt[n]{|\operatorname{Av}_{n}(dp)|}$ exists. As for the DPs beginning with $r>0$ number of squares, we can use Theorem \ref{th:sqBeginEnd} to write \begin{equation*}
\sqrt[n]{|\operatorname{Av}_{n}(\square^{r}q)|} = \sqrt[n]{n^{(r)}|\operatorname{Av}_{n-r}(q)|} = (n^{(r)})^{\frac{1}{n}}|\operatorname{Av}_{n-r}(q)|^{\frac{1}{n-r}\frac{n-r}{n}} \underset{n\rightarrow \infty}{\longrightarrow}  c_{q},  
\end{equation*}
where $q$ is a distant pattern which does not start with a square and \\ $\sqrt[n]{|\operatorname{Av}_{n}(q)|} \longrightarrow c_{q}$. This yields the following Stanley--Wilf type result for DPs.
\begin{theorem}
For any distant pattern $q$, there exists a constant $c>0$, such that
\begin{equation}
    \sqrt[n]{|\operatorname{Av}_{n}(q)|} \underset{n\rightarrow \infty}{\longrightarrow} c_{q}.
\end{equation}
\end{theorem}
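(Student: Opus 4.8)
The plan is to combine the two reductions of Section~\ref{sec:basic} with the classical superadditivity argument of Arratia \cite{arratia}, the only point requiring care beyond the discussion preceding the statement being the placement of the squares of $q$. First I would record the a priori bound: by Theorem~\ref{th:simultAv} we have $\operatorname{Av}(q)=\operatorname{Av}(\Pi)$ for a finite set $\Pi$ of classical patterns, so $\operatorname{Av}_n(q)\subseteq\operatorname{Av}_n(p)$ for each $p\in\Pi$; since the Stanley--Wilf conjecture holds (Marcus--Tardos \cite{MarcusTardos}), $|\operatorname{Av}_n(q)|\le c_p^{\,n}$ eventually, so $\bigl(|\operatorname{Av}_n(q)|^{1/n}\bigr)_n$ is bounded. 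This a priori bound is what will make the Fekete limit finite rather than merely a supremum.

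Next I would reduce to distant patterns that neither begin nor end with a square. Write $q=\square^{a}q'\square^{b}$ with $q'$ starting and ending with a non-square letter and $a,b\ge 0$. Theorem~\ref{th:sqBeginEnd} (together with its evident one-sided analogue, obtained via the reverse map when only one of $a,b$ vanishes) gives $|\operatorname{Av}_n(q)|=n^{(a+b)}|\operatorname{Av}_{n-a-b}(q')|$, hence
\begin{equation*}
|\operatorname{Av}_n(q)|^{1/n}=\bigl(n^{(a+b)}\bigr)^{1/n}\cdot\Bigl(|\operatorname{Av}_{n-a-b}(q')|^{1/(n-a-b)}\Bigr)^{(n-a-b)/n}.
\end{equation*}
Since $(n^{(a+b)})^{1/n}\to 1$ and $x\mapsto x^{t}$ is continuous at any positive base, it suffices to prove that $|\operatorname{Av}_m(q')|^{1/m}$ converges to a positive limit $c_{q'}$, and then set $c_q:=c_{q'}$.

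The heart of the proof is the claim that, for a distant pattern $q'$ with no leading or trailing square and underlying classical pattern $\overline{q'}=q'_1\cdots q'_k$ of size $k\ge 2$, the sequence $(|\operatorname{Av}_n(q')|)_n$ is supermultiplicative: $|\operatorname{Av}_{m+n}(q')|\ge|\operatorname{Av}_m(q')|\cdot|\operatorname{Av}_n(q')|$. The elementary observation is that a permutation of size $\ge 2$ cannot be at once a direct sum and a skew sum of two nonempty permutations, because its initial block would then have to consist simultaneously of the smallest and of the largest values; hence $\overline{q'}$ is not $\oplus$-decomposable, or it is not $\ominus$-decomposable. In the first case I would send $(\sigma,\tau)\in\operatorname{Av}_m(q')\times\operatorname{Av}_n(q')$ to $\sigma\oplus\tau$, in the second to $\sigma\ominus\tau$. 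One then checks this map lands in $\operatorname{Av}_{m+n}(q')$: an occurrence of $q'$ contained in a single block is, because $q'$ carries no leading or trailing square, a genuine occurrence of $q'$ in $\sigma$ or in $\tau$, a contradiction; and an occurrence straddling both blocks would force the letters of $\overline{q'}$ realized in the first block to be exactly its $t$ smallest (respectively largest) letters for some $1\le t\le k-1$, i.e. the very decomposition of $\overline{q'}$ we excluded. I expect this straddling analysis to be the main obstacle, and it is also where the hypothesis on the squares is indispensable: without stripping the trailing squares one genuinely loses supermultiplicativity (already $|\operatorname{Av}_{6}(12\square)|=6<9=|\operatorname{Av}_3(12\square)|^2$), which is precisely why the previous step removes squares from both ends.

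Finally I would apply Fekete's lemma to the superadditive sequence $a_n=\log|\operatorname{Av}_n(q')|$: the limit $\lim_n a_n/n$ exists and equals $\sup_n a_n/n$, which is finite by the first step and $\ge 0$ since $|\operatorname{Av}_n(q')|\ge 1$. Thus $|\operatorname{Av}_n(q')|^{1/n}\to c_{q'}\ge 1>0$, and combining with the reduction of the second step yields $|\operatorname{Av}_n(q)|^{1/n}\to c_q$. The few degenerate cases in which $\overline{q'}$ has size $1$ (such as the pattern $1$, or $\square 1$) are settled by a direct computation.
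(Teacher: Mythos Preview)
Your proof is correct and follows essentially the same route as the paper: bound via Theorem~\ref{th:simultAv} and Marcus--Tardos, strip boundary squares via Theorem~\ref{th:sqBeginEnd}, then invoke Arratia's supermultiplicativity and Fekete. You are in fact more careful than the paper's own discussion, which only explicitly treats \emph{leading} squares; your counterexample $12\square$ shows that trailing squares must also be stripped before supermultiplicativity holds, and your explicit $\oplus/\ominus$ dichotomy makes transparent what the paper merely cites as ``Arratia's observation.''
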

An interesting continuation might be to consider avoidance of $\operatorname{dist}_{r}(q)$, for a classical pattern $q$ and size of $r$ that increases with $n$. Obviously, if $r\geq n-1$, then $|\operatorname{Av}_{n}(\operatorname{dist}_{r}(q))| = n!$ for any pattern $q$. However, one may ask what will happen if $r$ is a fixed positive fraction of $n$? Is it true that when $n$ is growing, the number of permutations avoiding the corresponding series of DPs is still always converging to $c^{n}$, for some constant $c$? Below, we show that this is not the case, by using Theorem \ref{th:bijection}.

\begin{theorem}
 \label{conj:1}
It is not true that for any given classical pattern $q$, there exist constants $c > 0$ and $0 <c_{1} < 1$, such that 
\begin{equation} \label{eq:SWlimitFractionalR}
\sqrt[n]{|\operatorname{Av}_{n}(\operatorname{dist}_{r}(q))|}\overset{n\rightarrow \infty}{\underset{r= \lfloor c_{1}n \rfloor}{\longrightarrow}} c
\end{equation}
\end{theorem}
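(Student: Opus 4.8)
The plan is to disprove the quantified statement by exhibiting a single classical pattern for which the limit in \eqref{eq:SWlimitFractionalR} fails for \emph{every} admissible $c_1$. The natural candidate is $q = 21$, so that $\operatorname{dist}_r(q) = 2\square^{r}1$ and Theorem \ref{th:bijection} applies: $|\operatorname{Av}_n(2\square^{r}1)|$ equals the number of permutations of $[n]$ all of whose cycles consist of elements that pairwise differ by at most $r$. I will show that when $r = \lfloor c_1 n\rfloor$ with $c_1 \in (0,1)$ fixed, this quantity already grows faster than $c^{n}$ for every constant $c$, so that $\sqrt[n]{|\operatorname{Av}_n(\operatorname{dist}_r(q))|}\to\infty$; since $+\infty$ is not a finite positive constant, no admissible pair $(c,c_1)$ exists for $q = 21$, which is exactly the negation the theorem asks for.

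First I would set up a lower-bound construction. Write $m \coloneqq r+1$ and cut $[n]$ into consecutive intervals $\{1,\dots,m\},\{m+1,\dots,2m\},\dots$, using $\lfloor n/m\rfloor$ full blocks of length $m$ and leaving the last $n - m\lfloor n/m\rfloor < m$ points fixed. Placing an \emph{arbitrary} permutation inside each full block and the identity on the leftover points yields a permutation of $[n]$; every cycle of it lies inside a single block, hence its elements differ by at most $m-1 = r$. By Theorem \ref{th:bijection} all of these distinct permutations lie in $\operatorname{Av}_n(2\square^{r}1)$, so
\begin{equation*}
|\operatorname{Av}_n(2\square^{r}1)| \ \ge\ (m!)^{\lfloor n/m\rfloor}.
\end{equation*}
Next I would substitute $r = \lfloor c_1 n\rfloor$, so that $m = \lfloor c_1 n\rfloor + 1$ and $m/n \to c_1$, and estimate the $n$th root. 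Using the elementary inequality $m! \ge (m/e)^{m}$ together with $m\lfloor n/m\rfloor > n-m$ (and $m/e>1$ for $n$ large), one obtains
\begin{equation*}
\sqrt[n]{|\operatorname{Av}_n(\operatorname{dist}_r(21))|}\ \ge\ (m!)^{\lfloor n/m\rfloor/n}\ \ge\ \left(\frac{m}{e}\right)^{m\lfloor n/m\rfloor/n}\ \ge\ \left(\frac{m}{e}\right)^{\,1 - m/n}.
\end{equation*}
For $n$ large the exponent $1-m/n$ exceeds the positive constant $(1-c_1)/2$ while $m/e \to \infty$, so the right-hand side diverges. Hence $\sqrt[n]{|\operatorname{Av}_n(\operatorname{dist}_r(21))|}\to\infty$ for every fixed $c_1\in(0,1)$, and the theorem follows.

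There is no real analytic obstacle here; the only points demanding a little care are (i) handling the leftover $n - m\lfloor n/m\rfloor$ elements so the construction produces a genuine permutation with the required cycle-diameter property, and (ii) verifying that the exponent $m\lfloor n/m\rfloor/n$ stays bounded away from $0$ (it lies in roughly $(1-c_1,1]$), which is precisely what forces divergence rather than convergence to a finite limit. It is worth emphasizing that neither a matching upper bound nor the exact growth rate of $|\operatorname{Av}_n(2\square^{\lfloor c_1 n\rfloor}1)|$ is needed: a lower bound tending to infinity already contradicts convergence of the $n$th root to any finite $c>0$.
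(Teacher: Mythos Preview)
Your proof is correct and follows essentially the same route as the paper: both choose the length-$2$ pattern, invoke Theorem~\ref{th:bijection}, produce a factorial-power lower bound by placing permutations on consecutive blocks of $[n]$, and then use Stirling to see that the $n$th root diverges for every $c_1\in(0,1)$. The only differences are cosmetic (you take $q=21$ and blocks of size $r+1$ with arbitrary permutations, while the paper takes $q=12$ and blocks of size $r$ carrying single $r$-cycles), and your estimates are in fact a bit cleaner than the paper's.
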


% Then, a new Stanley--Wilf-type conjecture might be
% \begin{proposition} \label{conj:1}
% For any given classical pattern $q$, there exist constants $c > 0$ and $0 <c_{1} < 1$, such that 
% \begin{equation} \label{eq:SWlimitFractionalR}
% \sqrt[n]{|\operatorname{Av}_{n}(\operatorname{dist}_{r}(q))|}\overset{n\rightarrow \infty}{\underset{r= \lfloor c_{1}n \rfloor}{\longrightarrow}} c
% \end{equation}
% \end{proposition}
% In other words, we consider avoidance of a sequence of distant patterns, where the classical pattern $q$ must be avoided, but the minimal gap size is a positive fraction of $n$. We are asking is it true that the number of permutations avoiding such series of DPs is asymptotically $c^{n}$, for some constant $c$, similarly to the original Stanley--Wilf conjecture. We will now show that Proposition \ref{conj:1} is false, using Theorem \ref{th:bijection}.
% \begin{theorem}
% Proposition \ref{conj:1} is false.
% \end{theorem}
\begin{proof}
Consider the classical pattern $q = 12$. By Theorem \ref{th:bijection}, the number of $n$-permutations avoiding $1\square^{r}2$, for any $r \geq 1$, will be the same as the number of $n$-permutations for which in each of their cycles, any two elements differ by at most $r$. Denote this set of permutations with $S_{n}^{r}$. Furthermore, let $C_{n}^{r}$ be the set of permutations in $S_{n}$ for which each cycle is of length exactly $r$, except possibly $1$ cycle of smaller length, if $r$ does not divide $n$, and where each cycle is consisted of consecutive elements. Therefore, since $C_{n}^{r}\subseteq S_{n}^{r}$, we can use that $|C_{n}^{r}| \leq |S_{n}^{r}| = |\operatorname{Av}_{n}(1\square^{r}2)|$. In addition, we have the obvious bound $|C_{n}^{r}| \geq ((r-1)!)^{\lfloor \frac{n}{r} \rfloor}$. Thus for any given $0 < c_{1} < 1$, if $r = \lfloor c_{1}n \rfloor$, then for big enough values of $n$, we have: 
\begin{equation*}
\begin{split}
\operatorname{Av}_{n}(1\square^{r}2) \geq ((r-1)!)^{\lfloor \frac{n}{r} \rfloor} = ((\lfloor c_{1}n\rfloor -1)!)^{\lfloor \frac{n}{\lfloor c_{1}n \rfloor} \rfloor} = ((\lfloor c_{1}n\rfloor -1)!)^{\frac{1}{c_{1}}} \geq \\
((\frac{c_{1}}{2}n)!)^{\frac{1}{c_{1}}} \geq ((\frac{c_{1}n}{2e})^{\frac{c_{1}}{2}n})^{\frac{1}{c_{1}}} = (\frac{c_{1}n}{2e})^{\frac{n}{2}} = \sqrt{Cn^{n}} = \Omega(C^{n}),  
\end{split}    
\end{equation*}
for some constant $C>0$. In the last equation, we used the Stirling approximation.  
\end{proof}
The latter fact motivates us to consider the following conjecture.
\begin{conjecture}\label{conj:SWfracLimit}
For any given classical pattern $q$ and for every $0<c_{1}<1$, there exists $0<w<1$, such that:
\begin{equation}
    \lim_{n\to\infty}(\frac{|\operatorname{Av}_{n}(\operatorname{dist}_{\lfloor c_{1}n\rfloor}(q))|}{n!})^{\frac{1}{n}} = w
\end{equation}
\end{conjecture}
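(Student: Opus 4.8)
The plan is to split the assertion into three parts: existence of the limit $w$, positivity $w>0$, and $w<1$. Throughout write $p_n\coloneqq |\operatorname{Av}_n(\operatorname{dist}_r(q))|/n!$ with $r=\lfloor c_1 n\rfloor$ and $k=|q|$, and note $0<p_n\le 1$. A preliminary observation is that the statement as written needs a small correction: an occurrence of $\operatorname{dist}_r(q)$ consists of $k$ positions with consecutive gaps at least $r+1$, hence spans at least $(k-1)(r+1)$ positions, so for $c_1\ge\tfrac1{k-1}$ no occurrence fits once $n$ is large, giving $p_n=1$ and $w=1$; also $k=1$ forces $p_n=0$. So the content is: for $k\ge 2$ and $0<c_1<\tfrac1{k-1}$, the limit exists and lies in $(0,1)$, and this is what I would aim to prove.

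For $w>0$ I would refine the construction in the proof of Theorem~\ref{conj:1}. Cut the positions $1,\dots,n$ into $\lfloor n/(r+1)\rfloor$ consecutive blocks of length $r+1$ followed by one block of length $n\bmod(r+1)$, and build $\pi$ so that the blocks are \emph{graded} (every entry of an earlier block smaller than every entry of a later block, or every entry larger), choosing the direction so that the resulting monotone pattern of length $k$ differs from $q$ --- possible since $k\ge 2$ --- while the entries inside each block are in arbitrary order. Two positions of a common block are at distance $\le r<r+1$, so in any occurrence of $\operatorname{dist}_r(q)$ the $k$ chosen positions lie in distinct blocks; listed in increasing order of position they appear in the order of their blocks, hence in monotone value order by the grading, so they do not form a copy of $q$. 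Thus $|\operatorname{Av}_n(\operatorname{dist}_r(q))|\ge ((r+1)!)^{\lfloor n/(r+1)\rfloor}\,(n\bmod(r+1))!$, so
\[
p_n\ \ge\ \frac{((r+1)!)^{\lfloor n/(r+1)\rfloor}\,(n\bmod(r+1))!}{n!},
\]
and a Stirling estimate (using $m\log(n/m)\le n/e$ for $0\le m\le n$ to control the remainder block) gives $\liminf_n p_n^{1/n}\ge c_1 e^{-1/e}>0$.

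For $w<1$ in the range $c_1<\tfrac1{k-1}$ I would use a residue-class encoding. Partition the positions of $\pi$ into the $r+1$ classes modulo $r+1$; within any such class any two positions differ by at least $r+1$, so the pattern of $\pi$ on each class avoids $q$. The map sending $\pi$ to the tuple, over these classes, of (value set, internal order pattern) is injective, whence
\[
|\operatorname{Av}_n(\operatorname{dist}_r(q))|\ \le\ \frac{n!}{\prod_c |c|!}\prod_c |\operatorname{Av}_{|c|}(q)|\ =\ n!\prod_c \frac{|\operatorname{Av}_{|c|}(q)|}{|c|!},
\]
where each class size $|c|$ equals $\lfloor n/(r+1)\rfloor$ or $\lceil n/(r+1)\rceil$. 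Since $|\operatorname{Av}_{\ell+1}(q)|\le(\ell+1)|\operatorname{Av}_\ell(q)|$ (delete the largest entry), the factor $|\operatorname{Av}_\ell(q)|/\ell!$ is non-increasing in $\ell$, equals $1$ for $\ell<k$, and equals $\beta_0\coloneqq 1-1/k!<1$ at $\ell=k$; and a short count shows that when $c_1<\tfrac1{k-1}$ the number of classes of size at least $k$ is $\Theta(n)$ for large $n$ (all $r+1$ of them if $\lfloor n/(r+1)\rfloor\ge k$, and otherwise exactly $n-(r+1)(k-1)\ge(1-c_1(k-1))\,n-O(1)$ of them). Hence $p_n\le \beta_0^{\,\Theta(n)}$, so $\limsup_n p_n^{1/n}<1$; together with the lower bound this confines $w$ to a nondegenerate interval.

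The remaining --- and by far the hardest --- point is the \emph{existence} of $\lim_n p_n^{1/n}$; the estimates above only sandwich it. The standard route (Arratia's supermultiplicativity plus Fekete's lemma, recalled in Section~\ref{sec:SW}) is unavailable because the pattern $\operatorname{dist}_{\lfloor c_1 n\rfloor}(q)$ changes with $n$, and indeed $p_n$ fluctuates with the residue of $n$ modulo $r+1$, so even $\liminf=\limsup$ is not clear a priori. My plan would be to establish an approximate submultiplicativity $p_{n+n'}\le \mathrm{poly}(n+n')\,p_n\,p_{n'}$ by gluing: interleave, or take the direct sum of, avoiders of sizes $n$ and $n'$ for the relevant radii, and use the monotonicity of $|\operatorname{Av}_m(\operatorname{dist}_\rho(q))|$ in $\rho$ (a larger $\rho$ is a weaker constraint) to absorb the mismatch among $\lfloor c_1 n\rfloor,\lfloor c_1 n'\rfloor$ and $\lfloor c_1(n+n')\rfloor$. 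This is transparent when $q$ is sum-indecomposable, since a direct sum then cannot create a spanning occurrence of $q$; making it work for arbitrary $q$ --- presumably by cutting $q$ along its block decomposition and bounding the cross terms, or by a scale-comparison argument relating $n$ to $(1+\varepsilon)n$ and letting $\varepsilon\to 0$ --- is where I expect essentially all the difficulty to lie.
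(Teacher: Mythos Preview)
The statement you are trying to prove is a \emph{conjecture} in the paper and is explicitly left open; there is no proof to compare against. What the paper does supply is only the upper half: Lemma~\ref{lemma:SWd} shows $p_n\le d^{\,n}$ for some $d<1$ when $n\ge k(r+1)$, by exhibiting $r+1$ disjoint $k$-tuples of positions with all internal gaps equal to $r+1$ and using independence of their relative orders under uniform sampling. Your residue-class argument is a cleaner packaging of the same idea and in fact covers the full range $0<c_1<\tfrac{1}{k-1}$, whereas the hypothesis $n\ge k(r+1)$ of Lemma~\ref{lemma:SWd} effectively restricts it to $c_1<\tfrac{1}{k}$. Your observation that $w=1$ when $c_1\ge\tfrac{1}{k-1}$ and $w=0$ when $k=1$ is a genuine correction to the stated range of the conjecture, not made in the paper.

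Your graded-block construction for $w>0$ is correct and actually \emph{resolves} the paper's Conjecture~\ref{conj:SWlowerBound}. The paper's Theorem~\ref{conj:1} obtains a lower bound of this strength only for $q=12$, via the cycle bijection of Theorem~\ref{th:bijection}; your argument --- grade the blocks monotonically in whichever direction is not $q$, leaving the interior of each block free --- works for every $q$ with $k\ge 2$ and yields the sharper count $((r+1)!)^{\lfloor n/(r+1)\rfloor}(n\bmod(r+1))!$.

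Where you fall short is exactly where the paper does: existence of the limit. Moreover, your sketched mechanism points in the wrong direction. Gluing two avoiders by direct sum or interleaving manufactures \emph{more} avoiders of size $n+n'$, so it yields a lower bound on $|\operatorname{Av}_{n+n'}|$ --- supermultiplicativity of counts --- which in terms of densities reads $p_{n+n'}\ge p_np_{n'}/\binom{n+n'}{n}$; the exponentially large binomial factor kills any Fekete-type conclusion. To obtain the inequality $p_{n+n'}\le\mathrm{poly}(n+n')\,p_np_{n'}$ that you wrote down, you would instead need to \emph{decompose} every $(n{+}n')$-avoider into a pair of smaller avoiders in a nearly injective way with about $\binom{n+n'}{n}$ possible reconstructions --- a very different, and (as you rightly suspect) genuinely hard, statement. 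So while your two bounds go further than the paper, the existence step remains open and your outline does not yet supply a viable route.
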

The approach of Elizalde (\cite[Section 4]{Elizalde2006}) for consecutive patterns might be useful when one tries to prove the latter conjecture, even though this approach cannot be applied directly. Here, we prove one lemma that might help confirming the conjecture.
\begin{lemma}\label{lemma:SWd}
For any given classical pattern $q\in S_{k}$ and for every $0<c_{1}<1$, there exists $d<1$, such that if $r = \lfloor c_{1}n\rfloor$ and $n \geq k(r+1)$, then
\begin{equation}
    |\operatorname{Av}_{n}(\operatorname{dist}_{r}(q))| < d^{n}n!.
\end{equation}
\end{lemma}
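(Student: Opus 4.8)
The plan is to reduce avoidance of $\operatorname{dist}_r(q)$ to avoidance of the consecutive pattern $\underline{q}$ inside arithmetic subprogressions of positions, and then to bound the latter by an elementary block count. The starting point is this: if $\pi \in \operatorname{Av}_n(\operatorname{dist}_r(q))$ and we fix a residue $\rho \in \{0,1,\dots,r\}$, then the subsequence $\pi^{(\rho)}$ of $\pi$ read off at the positions congruent to $\rho$ modulo $r+1$ must avoid $\underline{q}$: any $k$ consecutive terms of $\pi^{(\rho)}$ sit at positions whose consecutive gaps equal exactly $r+1$, so an occurrence of $\underline{q}$ among them would be an occurrence of $\operatorname{dist}_r(q)$ in $\pi$.

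Next I would turn this into a count. A permutation $\pi \in S_n$ is the same data as (i) the ordered partition of the value set $[n]$ into blocks $V_0,\dots,V_r$, where $V_\rho$ collects the values appearing at positions $\equiv \rho \pmod{r+1}$, together with (ii) for each $\rho$, the arrangement of $V_\rho$ along those positions. The block sizes $m_\rho \coloneqq |V_\rho|$ are forced by $n$ and $r$ (each is $\lfloor n/(r+1)\rfloor$ or $\lceil n/(r+1)\rceil$), and avoiding $\operatorname{dist}_r(q)$ forces the arrangement in (ii) to avoid $\underline{q}$ for every $\rho$. Hence
\[
|\operatorname{Av}_n(\operatorname{dist}_r(q))| \;\le\; \binom{n}{m_0,\dots,m_r}\prod_{\rho=0}^{r}|\operatorname{Av}_{m_\rho}(\underline{q})| \;=\; n!\prod_{\rho=0}^{r}\frac{|\operatorname{Av}_{m_\rho}(\underline{q})|}{m_\rho!}.
\]

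It remains to control $|\operatorname{Av}_m(\underline{q})|/m!$ for $m \ge k$. I would cut $[m]$ into $t \coloneqq \lfloor m/k\rfloor$ consecutive blocks of length $k$: a $\underline{q}$-avoider has no such block flattening to $q$, so choosing the value sets of the blocks, an admissible arrangement ($k!-1$ choices) inside each, and an arbitrary arrangement of the leftover letters yields $|\operatorname{Av}_m(\underline{q})| \le m!\,(1-1/k!)^{t}$. Plugging this in, the hypothesis $n \ge k(r+1)$ gives $n/(r+1)\ge k$, hence every $m_\rho \ge \lfloor n/(r+1)\rfloor \ge k$; and $\lfloor m/k\rfloor \ge m/(2k)$ for $m\ge k$, so $\sum_{\rho}\lfloor m_\rho/k\rfloor \ge \sum_\rho m_\rho/(2k) = n/(2k)$. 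Combining, $|\operatorname{Av}_n(\operatorname{dist}_r(q))| \le n!\,(1-1/k!)^{n/(2k)}$, and $d \coloneqq (1-1/k!)^{1/(2k+1)} < 1$ does the job, the slack in the exponent turning $\le$ into the required strict inequality.

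The only place the constraint $n \ge k(r+1)$ is genuinely needed — and the one step I expect to require care — is this last one. Because $r \approx c_1 n$ is large there are many residue classes, each of size only about $n/(r+1)$, which looks worrying; the point is precisely that $n \ge k(r+1)$ keeps every class of size at least $k$, so each still contributes a factor of $\underline{q}$-avoiders bounded away from $1$, and the number of independent block tests across all classes adds up to $\Theta(n)$. Everything else is bookkeeping with floor functions. (Note that $d$ then depends only on $k$, not on $c_1$; one could also invoke Elizalde's asymptotics for $|\operatorname{Av}_m(\underline{q})|/m!$ to sharpen the constant, but this is unnecessary for the stated lemma.)
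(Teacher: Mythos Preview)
Your proof is correct and follows essentially the same approach as the paper: both partition the positions into residue classes modulo $r+1$, observe that $k$ consecutive positions within a class are separated by exactly $r$ intermediate letters so a $q$-ordered block there would give an occurrence of $\operatorname{dist}_r(q)$, and then use independence of the relative orders on disjoint position blocks to get a $(1-1/k!)^{\Theta(n)}$ bound. Your version packages this through the consecutive pattern $\underline{q}$ and keeps all $\lfloor m_\rho/k\rfloor$ blocks per residue class rather than just one, which is why your $d$ depends only on $k$, whereas the paper's $d=(1-1/k!)^{(r+1)/n}$ depends on $c_1$ as well.
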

\begin{proof}
Assume that $n \geq k(r+1)$ for some $c_{1}$ and $n$ and let us take an arbitrary permutation $\pi = \pi_{1}\pi_{2}\cdots \pi_{n}$. We can divide the elements of $\pi$ into roughly $\frac{n}{k}$ non-overlapping subsequences of size $k$, such that if $\pi\in \operatorname{Av}_{n}(\operatorname{dist}_{r}(q))$, then neither of these subsequences is order-isomorphic to $q$. We are looking for an upper bound and thus such a necessary condition could help. One way to get such a partition into subsequences is to take
\begin{equation*}
\{\pi_{1}\pi_{r+2}\cdots \pi_{(k-1)r+k}, \pi_{2}\pi_{r+3}\pi_{2r+4}\cdots, \pi_{r+1}\pi_{2r+2}\cdots \},
\end{equation*}
with the first element in every next subsequence being the first not yet used element of $\pi$. Denote this family of subsequences by $\mathbb{F}$ and the event that after a uniform sampling of a permutation $\pi$, no subsequence in  $\mathbb{F}$ is order isomorphic to $q$ by $E_{\mathbb{F},q}$. Since $|\mathbb{F}|\geq (r+1) > 0$, we will have that
\begin{equation}
\mathbb{P}(E_{\mathbb{F},q}) \leq (1-\frac{1}{k!})^{r+1}
\end{equation}
Therefore, if we write $C_{\pi,\operatorname{dist}_{r}(q)}$ for the event that $\pi$ contains the pattern $\operatorname{dist}_{r}(q)$, then \\ $\mathbb{P}(C_{\pi,\operatorname{dist}_{r}(q)}) > 1 - (1-\frac{1}{k!})^{r+1}$ and thus the number of permutations $\pi$ containing $\operatorname{dist}_{r}(q)$ is at least $n!(1 - (1-\frac{1}{k!})^{r+1})$ from which we can deduce that $|\operatorname{Av}_{n}(\operatorname{dist}_{r}(q))| \leq n!((1-\frac{1}{k!})^{r+1}) = ((1-\frac{1}{k!})^{\frac{r+1}{n}})^{n}n! =  d^{n}n!$, for $d = (1-\frac{1}{k!})^{\frac{r+1}{n}}$.
\end{proof}
An analogous fact could be conjectured about the bound from below, which would lead to a proof of Conjecture \ref{conj:SWfracLimit}, since we have Lemma \ref{lemma:SWd}.
\begin{conjecture}\label{conj:SWlowerBound}
For any given classical pattern $q\in S_{k}$ and for every $0<c_{1}<1$, there exists $c>0$, such that if $r = \lfloor c_{1}n\rfloor$ and $n \geq k(r+1)$, then:
\begin{equation}
    |\operatorname{Av}_{n}(\operatorname{dist}_{r}(q))| > c^{n}n!.
\end{equation}
\end{conjecture}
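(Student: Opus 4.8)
The plan is to build, for every $n$, an explicit family of $\operatorname{dist}_{r}(q)$-avoiders of cardinality at least $n!/(1/c_{1}+1)^{n}$; the conjecture then follows with, say, $c=c_{1}/(2+2c_{1})$. Throughout we assume $k\geq 2$, which is necessary — for $k=1$ one has $\operatorname{Av}_{n}(\operatorname{dist}_{r}(1))=\operatorname{Av}_{n}(1)=\varnothing$ for every $n\geq 1$, so the stated inequality fails, and the conjecture is only meant for $k\geq 2$.

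The construction is by \emph{block sums}. Partition the positions $\{1,\dots ,n\}$ into consecutive blocks $B_{1},B_{2},\dots$, namely $\lfloor n/(r+1)\rfloor$ blocks of length $r+1$ and possibly one shorter block at the end, and let $\pi=\sigma^{(1)}\oplus\sigma^{(2)}\oplus\cdots$ be any direct sum with respect to this block structure: $\pi$ restricted to the $j$th block is an arbitrary permutation of the $j$th value-interval, and the value-intervals appear left to right in increasing order. I claim any such $\pi$ avoids $\operatorname{dist}_{r}(q)$ whenever $q\neq 12\cdots k$. Indeed, in an occurrence of $\operatorname{dist}_{r}(q)$ the chosen positions $i_{1}<\cdots <i_{k}$ satisfy $i_{s+1}-i_{s}\geq r+1$, so no two of them lie in a common block (a block of length $\leq r+1$ has diameter $\leq r$); hence they lie in strictly increasing blocks, which forces $\pi_{i_{1}}<\cdots <\pi_{i_{k}}$, i.e.\ $\textsf{fl}(\pi_{i_{1}}\cdots\pi_{i_{k}})=12\cdots k\neq q$, a contradiction. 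For the one remaining pattern $q=12\cdots k$, note that reversing a permutation sends an occurrence of $\operatorname{dist}_{r}(q)$ at positions $i_{1}<\cdots <i_{k}$ to an occurrence of $\operatorname{dist}_{r}(\overline q)$ at the mirrored positions $n+1-i_{k}<\cdots <n+1-i_{1}$, which have the same consecutive gaps; hence $|\operatorname{Av}_{n}(\operatorname{dist}_{r}(q))|=|\operatorname{Av}_{n}(\operatorname{dist}_{r}(\overline q))|$, and since $\overline{12\cdots k}=k(k-1)\cdots 1\neq 12\cdots k$ for $k\geq 2$, the reversed pattern is handled by the direct-sum construction. (Equivalently, for $q=12\cdots k$ one uses skew sums of blocks in place of direct sums.)

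It remains to count. For the fixed block structure above, distinct tuples $(\sigma^{(1)},\sigma^{(2)},\dots)$ yield distinct avoiders, so, writing $m$ for the number of blocks ($m\leq \lfloor n/(r+1)\rfloor+1$),
\begin{equation*}
|\operatorname{Av}_{n}(\operatorname{dist}_{r}(q))|\ \geq\ \prod_{j}|B_{j}|!\ =\ \frac{n!}{\binom{n}{|B_{1}|,\ \dots ,\ |B_{m}|}}\ \geq\ \frac{n!}{m^{n}},
\end{equation*}
using the elementary bound $\binom{n}{n_{1},\dots ,n_{m}}\leq m^{n}$. With $r=\lfloor c_{1}n\rfloor$ one has $r+1>c_{1}n$, so $n/(r+1)<1/c_{1}$ and therefore $m\leq 1/c_{1}+1$ for every $n$. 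Hence $|\operatorname{Av}_{n}(\operatorname{dist}_{r}(q))|\geq\bigl(c_{1}/(1+c_{1})\bigr)^{n}n!$ for all $n\geq 1$, and any $c<c_{1}/(1+c_{1})$ makes the inequality strict — in particular on the range $n\geq k(r+1)$ where the conjecture lives.

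The construction, the block argument, and the count are all soft. The one step requiring an idea is the monotone pattern $q=12\cdots k$: the plain direct sum \emph{does} contain $\operatorname{dist}_{r}(q)$, so one must pass to skew sums (or invoke the reversal symmetry). The remainder is bookkeeping — verifying that $n!/m^{n}$ is genuinely of the form $c^{n}n!$, which is exactly where one needs the number of blocks $m\leq 1/c_{1}+1$ to be bounded, and observing that for $n<k(r+1)$ one has $\operatorname{Av}_{n}(\operatorname{dist}_{r}(q))=S_{n}$, so nothing is claimed there.
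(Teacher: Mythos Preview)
The paper does not prove this statement --- it is labeled a conjecture and listed among the open problems in Section~\ref{sec:open}. Your argument, however, is correct and resolves it (for $k\geq 2$, as you rightly note; for $k=1$ the statement is false and was surely not intended).

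The block-sum construction is clean: partitioning $[n]$ into $m\leq 1/c_1+1$ consecutive intervals of length $r+1$ (plus a remainder) and taking direct sums forces any would-be occurrence of $\operatorname{dist}_r(q)$ to draw its $k$ positions from pairwise distinct blocks, hence to realise the pattern $12\cdots k$; this disposes of every $q\neq 12\cdots k$, and reversal (equivalently, skew sums) handles the monotone case. The count $\prod_j|B_j|!\geq n!/m^n$ via the trivial multinomial bound then yields the desired $c^n n!$ with $c$ depending only on $c_1$. Nothing in the paper anticipates this --- the only nearby construction is the cycle-based lower bound in the proof of Theorem~\ref{conj:1}, which is specific to the pattern $21$ (via Theorem~\ref{th:bijection}) and gives a cruder estimate. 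Your argument is both more general and valid for all $n\geq 1$, not merely the range $n\geq k(r+1)$. One small inaccuracy in your closing remark: the claim that $\operatorname{Av}_n(\operatorname{dist}_r(q))=S_n$ for all $n<k(r+1)$ overshoots --- the correct threshold is $n\leq (k-1)(r+1)$ --- but this is a side comment and plays no role in the proof itself.
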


We saw that when $r$ is a positive fraction of $n$, the number of $n$-permutations avoiding the corresponding distant pattern may become huge. Thus it would be reasonable to consider a Stanley--Wilf type conjecture, where $r$ is asymptotically smaller than $\mathcal{O}(n)$, e.g., a function of the kind $n^{c_{2}}$, for $0 < c_{2} < 1$.
\begin{conjecture}\label{conj:SWexp}
For any given classical pattern $q$, there exist $c_{q} > 0$ and $0 <c_{2} < 1$, such that
\begin{equation}
\sqrt[n]{|\operatorname{Av}_{n}(\operatorname{dist}_{r}(q))|}\overset{n\rightarrow \infty}{\underset{r= \lfloor n^{c_{2}} \rfloor}{\longrightarrow}} c_{q}
\end{equation}
\end{conjecture}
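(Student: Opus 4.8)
I would attack Conjecture~\ref{conj:SWexp} exactly as the paper treats Theorem~\ref{conj:1}: first obtain matching upper and lower bounds for $|\operatorname{Av}_n(\operatorname{dist}_r(q))|$ with $r=\lfloor n^{c_2}\rfloor$ and $q\in S_k$, and then invoke a Fekete-type subadditivity argument to promote them to a limit. For the upper bound, the proof of Lemma~\ref{lemma:SWd} goes through unchanged -- it uses only $n\ge k(r+1)$, which holds for all large $n$ since $r=o(n)$ -- so $|\operatorname{Av}_n(\operatorname{dist}_r(q))|\le n!\,(1-1/k!)^{r+1}$ and hence
\begin{equation*}
\sqrt[n]{|\operatorname{Av}_n(\operatorname{dist}_r(q))|}\;\le\;\sqrt[n]{n!}\cdot(1-1/k!)^{(r+1)/n}\;\sim\;\frac{n}{e},
\end{equation*}
because $(r+1)/n=\Theta(n^{c_2-1})\to0$. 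This is already a warning sign: the natural upper bound is not bounded, and the only subadditivity input available here (Arratia's inequality $|\operatorname{Av}_{m+n}|\ge|\operatorname{Av}_m|\cdot|\operatorname{Av}_n|$, valid because $\operatorname{dist}_r(q)$ does not begin with a square, as noted in Section~\ref{sec:SW}) can only give existence of $\lim_n\sqrt[n]{|\operatorname{Av}_n(\operatorname{dist}_r(q))|}$ in $[1,+\infty]$, not its finiteness.

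\textbf{The lower bound is the obstacle -- and it is fatal.} When I turned to the lower bound I would find that it is exactly the computation behind Theorem~\ref{conj:1}, now carried out with a growing number of blocks, and that it refutes the conjecture. It suffices to exhibit a single pattern for which no pair $(c_q,c_2)$ works, and $q=12$ already does. Since $\operatorname{dist}_r(12)=1\square^r 2$ and $|\operatorname{Av}_n(1\square^r 2)|=|\operatorname{Av}_n(2\square^r 1)|$ by the reverse map, Theorem~\ref{th:bijection} tells us that $|\operatorname{Av}_n(\operatorname{dist}_r(12))|$ equals the number of permutations of $[n]$ all of whose cycles have diameter at most $r$; keeping only those whose cycles are the consecutive blocks $\{1,\dots,r\},\{r+1,\dots,2r\},\dots$ with an arbitrary cyclic order on each block gives
\begin{equation*}
|\operatorname{Av}_n(\operatorname{dist}_r(12))|\;\ge\;((r-1)!)^{\lfloor n/r\rfloor}.
\end{equation*}
With $r=\lfloor n^{c_2}\rfloor$ one has $\lfloor n/r\rfloor\ge n/(2r)$ for all large $n$, so Stirling gives
\begin{equation*}
\sqrt[n]{|\operatorname{Av}_n(\operatorname{dist}_r(12))|}\;\ge\;((r-1)!)^{\lfloor n/r\rfloor/n}\;\ge\;((r-1)!)^{1/(2r)}\;\sim\;\sqrt{r/e}\;=\;\sqrt{n^{c_2}/e}\;\longrightarrow\;\infty.
\end{equation*}
Hence for every fixed $0<c_2<1$ the sequence $\sqrt[n]{|\operatorname{Av}_n(\operatorname{dist}_{\lfloor n^{c_2}\rfloor}(12))|}$ diverges, so no finite $c_q$ can be its limit. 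The statement as worded therefore cannot be proved: it is false already for $q=12$.

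\textbf{What can be proved instead, and the real obstacle.} What I would actually write up in place of the conjecture is the natural strengthening of Theorem~\ref{conj:1}: for every classical pattern $q$ of size $k\ge2$ and every $0<c_2<1$, $\sqrt[n]{|\operatorname{Av}_n(\operatorname{dist}_{\lfloor n^{c_2}\rfloor}(q))|}\to\infty$. Its proof is the display above together with the trivial remark that the first two entries of any occurrence of $\operatorname{dist}_r(q)$ already form an occurrence of the two-letter distant pattern $\operatorname{dist}_r(\textsf{fl}(q_1 q_2))$, so $|\operatorname{Av}_n(\operatorname{dist}_r(q))|\ge|\operatorname{Av}_n(\operatorname{dist}_r(\textsf{fl}(q_1 q_2)))|$ and one reduces to $q=12$ or $q=21$ (the latter handled identically). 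The only point needing a word beyond Theorem~\ref{conj:1} is that letting $\lfloor n/r\rfloor\to\infty$ does not weaken the bound $((r-1)!)^{\lfloor n/r\rfloor}$ -- it strengthens it. So the genuine obstacle to Conjecture~\ref{conj:SWexp} is not a missing technique but the fact that it is not true: a finite Stanley--Wilf limit for $\operatorname{dist}_r(q)$ persists only while $r$ stays bounded, and any $r\to\infty$, in particular $r=\lfloor n^{c_2}\rfloor$, already forces the $n$-th root to diverge.
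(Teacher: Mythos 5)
The statement you were asked about is stated in the paper as an open conjecture; the paper contains no proof of it, so there is nothing to compare your argument against on that front. What you have written is not a proof but a refutation, and as far as I can check it is a correct one. The decisive computation is exactly the paper's own lower bound from the proof of Theorem~\ref{conj:1}: via Theorem~\ref{th:bijection} (and the reverse map), $|\operatorname{Av}_n(\operatorname{dist}_r(12))|\geq ((r-1)!)^{\lfloor n/r\rfloor}$, and the $n$-th root of the right-hand side is $((r-1)!)^{\lfloor n/r\rfloor/n}\approx ((r-1)!)^{1/r}\sim r/e$, which diverges for \emph{any} $r=r(n)\to\infty$, not only for $r=\Theta(n)$. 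The paper's Theorem~\ref{conj:1} only records the consequence for $r=\lfloor c_1 n\rfloor$, but nothing in that computation uses linearity of $r$, so taking $r=\lfloor n^{c_2}\rfloor$ kills Conjecture~\ref{conj:SWexp} for $q=12$, and your monotonicity remark (an occurrence of $\operatorname{dist}_r(q)$ contains an occurrence of $\operatorname{dist}_r(\textsf{fl}(q_1q_2))$ in its first two letters) extends the failure to every classical pattern of size at least $2$. Your use of Lemma~\ref{lemma:SWd} for the upper bound is also fine, since $n\geq k(r+1)$ holds eventually when $r=o(n)$.

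Two small caveats. First, your parenthetical appeal to Arratia-style supermultiplicativity is not quite available here: Fekete's lemma applies to a fixed pattern, whereas the sequence $|\operatorname{Av}_n(\operatorname{dist}_{r(n)}(q))|$ involves a pattern changing with $n$; but you do not actually rely on this, so it costs you nothing. Second, having refuted the conjecture as literally stated, the natural salvage is the normalized version already present in the paper as Conjecture~\ref{conj:SWfracLimit}: your bounds show $\bigl(|\operatorname{Av}_n(\operatorname{dist}_{\lfloor n^{c_2}\rfloor}(q))|/n!\bigr)^{1/n}$ is squeezed between roughly $n^{c_2-1}$ and $(1-1/k!)^{(r+1)/n}\to 1$, so whether that ratio has a limit in $(0,1)$ remains genuinely open and is the question the author presumably intended.
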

If the latter conjecture is true, then one might ask which are the allowable growth rates $c_{q}$ when $c_2$ is a fixed positive constant. Furthermore, an interesting additional question would be to find a function $g(n)$, such that
\begin{equation*}
    \sqrt[n]{|\operatorname{Av}_{n}(\operatorname{dist}_{r}(q))|}\overset{n\rightarrow \infty}{\underset{r= \lfloor \Theta(g(n)) \rfloor}{\longrightarrow}} c, 
\end{equation*}
 for some constant $c>0$, but 
\begin{equation*}
    \sqrt[n]{|\operatorname{Av}_{n}(\operatorname{dist}_{r}(q))|}\overset{n\rightarrow \infty}{\underset{r= \lfloor \Omega(g(n)) \rfloor}{\centernot\longrightarrow}} c, 
\end{equation*}
for any $c>0$.

\section{Open problems and future work} \label{sec:open}
In this section, we list some ideas for possible further investigations, related to the current work on distant patterns:
\begin{itemize}
    \item The approach using the map $g$ in section \ref{sec:len2} can be applied to the set $A'_{n-1} = S_{n-1}\setminus \operatorname{Av}_{n}(1\square 23)\times [n]$ with the hope to find an equation that will give us a recurrence formula for $|\operatorname{Av}_{n}(1\square 2 \square 3)|$. We have checked that a statement analogous to Theorem \ref{th:almostInjective} holds and that no permutation is the image of $g$ for more than two different elements of $A'_{n-1}$.
    
    \item One can try to prove or refute the following surprising conjecture (see section \ref{sec:polygons} for a discussion):
    
    \begin{conjecture}\label{conj:PutSquare}
Choose one of the 3 places between consecutive letters in the patterns $\{1234,1243,2143\}$ and put a square at that place for each of the three given classical patterns. You will obtain three Wilf-equivalent distant patterns. For example
\begin{equation}
|\operatorname{Av}_{n}(1 \square 234)| = |\operatorname{Av}_{n}(1 \square 243)| = |\operatorname{Av}_{n}(2 \square 143)|    
\end{equation}
\end{conjecture}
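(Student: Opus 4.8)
The plan is to lift to distant patterns the classical proof that $1234$, $1243$ and $2143$ are Wilf-equivalent. That equivalence (West; see also Babson--West and Backelin--West--Xin) can be obtained from the shape-Wilf-equivalence of $12$ and $21$ on Ferrers boards together with the fact that shape-Wilf-equivalence is preserved by direct sums: writing $1234=12\oplus 12$, $1243=12\oplus 21$ and $2143=21\oplus 21$, one passes from each pattern to the next by replacing a length-$2$ summand $12$ by $21$. I would try to prove a ``distant'' version of this chain. Concretely, one introduces pattern-avoiding fillings of a Ferrers board carrying one distinguished empty row-and-column (the \emph{gap}), proves the refined shape-Wilf-equivalences $12\sim 21$ and $123\sim 132$ with the gap placed in the relevant position, and establishes a composition lemma stating that replacing a direct summand of a distant pattern $\alpha\,\square\,\beta$ by a (refined-)shape-Wilf-equivalent one leaves the number of avoiders unchanged. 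Feeding the move $12\leftrightarrow 21$ through this lemma would yield all three instances of the conjecture, one for each of the three gap positions.

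A more hands-on route is available through Theorem~\ref{th:simultAv}. For the gap in the first position it rewrites the conjecture as the assertion that the three explicit families of five length-$5$ classical patterns
\begin{gather*}
\Sigma_1=\{12345,\,13245,\,14235,\,15234,\,21345\},\\
\Sigma_2=\{12354,\,13254,\,14253,\,15243,\,21354\},\\
\Sigma_3=\{23154,\,24153,\,25143,\,31254,\,32154\}
\end{gather*}
are pairwise set-Wilf-equivalent, and similarly for the other two gap positions. Passing from $\Sigma_1$ to $\Sigma_2$, and from $\Sigma_2$ to $\Sigma_3$, is in each case the move that reverses the relative order of two entries in two fixed positions of every pattern of the family --- i.e.\ the $12\leftrightarrow 21$ swap confined to a two-letter block --- so one would look for an explicit bijection realizing each swap, again modelled on the Babson--West transfer map but with the designated positions frozen.

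As a first concrete step I would settle the \emph{middle}-gap case $|\operatorname{Av}_{n}(12\square 34)|=|\operatorname{Av}_{n}(12\square 43)|=|\operatorname{Av}_{n}(21\square 43)|$. There all three patterns have the form $A\,\square\,B$ with $A,B\in\{12,21\}$ and with the square sitting exactly at the boundary between the two length-$2$ summands, so the classical $\oplus$-composition argument ought to go through with only bookkeeping changes: the gap column is a forced-empty column that the row-by-row transfer bijection for $12\sim 21$ simply passes over. This case would serve both as a template and as strong evidence for the general conjecture.

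The main obstacle is the pattern $2\,\square\,143$ (and, by reverse-complement, $214\,\square\,3$). Although $2143=21\oplus 21$, its square lies \emph{between} the two letters of the first summand rather than at the summand boundary, so $2143\neq 2\oplus 143$ and the clean ``split the permutation at the gap, then recombine the two halves'' arguments that dispatch $1\,\square\,234$ and $1\,\square\,243$ do not apply to $2\,\square\,143$. What is really needed is the version of the composition lemma in which the distinguished gap occupies a prescribed \emph{internal} position of a summand; proving that the refined shape-Wilf-equivalence survives an internal marked gap, and remains stable under direct sums, is the crux of the whole argument. Failing such a lemma, one must instead build a tailor-made bijection between $\operatorname{Av}_{n}(1\,\square\,243)$ and $\operatorname{Av}_{n}(2\,\square\,143)$; a reasonable place to start, in the spirit of Section~\ref{sec:conj}, is to determine the forced structure of a permutation that avoids one of these patterns while containing $1243$ (respectively $2143$), and to check that the two structures coincide.
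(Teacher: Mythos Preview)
The statement you are attempting to prove is \emph{not} proved in the paper: it is listed explicitly as Conjecture~\ref{conj:PutSquare} in Section~\ref{sec:open} (``Open problems and future work''), introduced with the sentence ``One can try to prove or refute the following surprising conjecture.'' There is therefore no ``paper's own proof'' to compare against.

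Your proposal is a reasonable research plan, not a proof. You correctly identify that the classical Wilf-equivalence of $1234$, $1243$, $2143$ comes from the shape-Wilf-equivalence $12\sim 21$ together with stability under direct sums, and you correctly isolate the crux: for the first and third gap positions, the square sits \emph{inside} a direct summand (e.g.\ $2\,\square\,143$), so the standard Babson--West/Backelin--West--Xin machinery does not apply verbatim. But you do not resolve this obstacle --- you explicitly flag it as ``the main obstacle'' and leave it as something that ``is really needed.'' The Theorem~\ref{th:simultAv} reformulation into set-Wilf-equivalences of five-element families of length-$5$ patterns is accurate and potentially useful, but again you only describe what a bijection \emph{would} have to do, not construct one. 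The suggestion to first settle the middle-gap case $12\square 34 \sim 12\square 43 \sim 21\square 43$ is sensible, since there the square does sit at the summand boundary; completing even that case rigorously would already be new content relative to the paper.

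In short: the paper has no proof, and neither do you --- what you have written is a well-informed outline of where the difficulties lie, which is appropriate given that the conjecture is open.
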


We should note that a similar statement does not hold for any two Wilf-equivalent classical patterns, because $|\operatorname{Av}_{n}(4132)| = |\operatorname{Av}_{n}(3142)|$ \cite{stankova}, for all $n>1$, but $|A_{7}(4 \square 132)| = 3592 \neq 3587 = |A_{7}(3 \square 142)|$.     

    \item  Three more conjectures related to the least and most avoided uniform distant patterns can be investigated (see \textbf{section \ref{sec:len3}} for a discussion):
    
    \begin{conjecture} \label{conj:asympt123}
 For every $m \geq 3$ and $r \geq 1$, there exists $n_0 \in \mathbb{N}$ such that for every natural $n > n_{0}$, we have 
 \begin{equation}
 |\operatorname{Av}_{n}(\operatorname{dist}_{r}(12 \cdots m))| \geq |\operatorname{Av}_{n}(\operatorname{dist}_{r}(q))|,  
 \end{equation}
  for any given classical pattern $q$ of size $m$.
 \end{conjecture}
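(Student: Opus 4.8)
The plan is to recast the statement in the poset language of Hopkins and Weiler and then reduce it to a comparison of exponential growth rates. For fixed $r$, let $P_{n,r}$ be the poset on the positions $\{1,\dots,n\}$ in which $i\prec j$ precisely when $j-i>r$. A bijective labelling of $P_{n,r}$ by $[n]$ is just a permutation $\pi$, a chain $x_{1}\prec\cdots\prec x_{m}$ in $P_{n,r}$ is a choice of positions with consecutive gaps larger than $r$, and the labels along such a chain realise $q$ iff $\pi$ has an occurrence of $\operatorname{dist}_{r}(q)$; hence $|\operatorname{Av}_{n}(\operatorname{dist}_{r}(q))|=|\operatorname{Av}_{P,n}(q)|$ with $P=P_{n,r}$. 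For $m=3$ the conjecture reduces to the Hopkins--Weiler result quoted above \cite{hopkins}, so the new content is $m\ge 4$. Note that for $r=0$ the poset $P_{n,0}$ is a chain and the asserted inequality is \emph{false} — for instance $|\operatorname{Av}_{n}(1324)|>|\operatorname{Av}_{n}(1234)|$ for large $n$ — so any proof must genuinely use $r\ge 1$, i.e.\ that adjacent positions of $P_{n,r}$ are incomparable and $P_{n,r}$ has width $r+1$; in particular the ``all posets'' inequality of Hopkins and Weiler cannot persist for $m\ge 4$, and one has to argue about these specific long, thin posets.

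By the Stanley--Wilf type theorem for distant patterns established above, applied to the fixed distant pattern $\operatorname{dist}_{r}(q)$, the limit $\rho_{q}:=\lim_{n\to\infty}\sqrt[n]{|\operatorname{Av}_{n}(\operatorname{dist}_{r}(q))|}$ exists, so it suffices to prove $\rho_{12\cdots m}>\rho_{q}$ for every $q\in S_{m}$ outside $\{12\cdots m,\,m\cdots 21\}$ (via the complement map $\operatorname{dist}_{r}(12\cdots m)$ and $\operatorname{dist}_{r}(m\cdots 21)$ are equinumerous, so equality for $q=m\cdots 21$ is allowed and harmless). To reach $\rho_{q}$, group the positions into consecutive blocks $B_{0},B_{1},\dots$ of size $r+1$: within a block all positions are pairwise incomparable, adjacent blocks are joined by an order-preserving fan (the $a$th position of $B_{t}$ precedes the $b$th position of $B_{t+1}$ iff $a\le b$), and positions in non-adjacent blocks are always comparable. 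Reading $\pi$ block by block should yield a transfer operator or functional equation for the generating function $F_{q}(x)=\sum_{n\ge 0}|\operatorname{Av}_{n}(\operatorname{dist}_{r}(q))|\,x^{n}/n!$, in the spirit of the Goulden--Jackson cluster method and its use for consecutive patterns, the new difficulty being that the $\square^{r}$-gaps may be filled by arbitrary letters, so clusters interact at block distance rather than at letter distance. One would then read $\rho_{q}$ off the dominant singularity of $F_{q}$ and compare. This is the distant-pattern analogue of Elizalde's theorem that $\underline{12\cdots m}$ is the asymptotically most avoided consecutive pattern \cite{Elizalde2012}, and it specialises to the $m=3$, $r\ge 1$ case already known.

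If the full singularity analysis proves too heavy, a coarser route is to sandwich the two counts: on one side exhibit an explicit family of $\operatorname{dist}_{r}(12\cdots m)$-avoiders — for instance one-line words built from sufficiently long decreasing runs, arranged so that no $r$-spread increasing subsequence of length $m$ can occur — to lower-bound $\rho_{12\cdots m}$, and on the other show that avoiding $\operatorname{dist}_{r}(q)$ for a non-monotone $q$ forces avoidance of some strictly richer distant structure, upper-bounding $\rho_{q}$; if these bounds cross for every $q\ne 12\cdots m,\,m\cdots 21$, we are done. Either way, the crux — and the main obstacle I expect — is the growth-rate comparison itself: already for consecutive patterns ($r=0$, where the naive inequality fails but Elizalde's asymptotic version \cite{Elizalde2012} holds) the proof that the monotone pattern wins is delicate, turning on the dominant singularity of an implicitly defined analytic function, and here the free letters in every gap destroy the clean cluster structure. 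Moreover no closed form — indeed no generating function — is presently known for $|\operatorname{Av}_{n}(\operatorname{dist}_{r}(12\cdots m))|$ beyond the smallest cases ($m=2$ in Section~\ref{sec:len2}, $m=3$ in Section~\ref{sec:len3}), so making the sandwich bounds actually meet, or controlling the sub-dominant terms when several growth rates coincide, is where the real work would lie.
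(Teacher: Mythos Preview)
The paper does not prove this statement: it is listed explicitly as an open conjecture in Section~\ref{sec:open} (``Open problems and future work''), with no proof or proof sketch offered. There is therefore nothing in the paper to compare your attempt against.

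Your write-up is not a proof either, and you are candid about this: you outline a strategy (recast in the Hopkins--Weiler poset language, reduce to comparing Stanley--Wilf limits $\rho_q$, attack $\rho_q$ via a block transfer operator or cluster method, or alternatively sandwich the counts), but you also correctly flag the real obstruction, namely that establishing $\rho_{12\cdots m}>\rho_q$ for all non-monotone $q$ is already a hard analytic problem in the consecutive case and becomes harder here because the free letters in the $\square^{r}$ gaps destroy the cluster structure. Nothing you write is wrong as orientation --- the poset reformulation is accurate, the $m=3$ case does reduce to Hopkins--Weiler, and your observation that the $r=0$ inequality is false (e.g.\ $1324$ versus $1234$) is a useful sanity check showing $r\ge1$ must be used essentially --- but none of the proposed steps is carried out, and you yourself identify the step (the growth-rate comparison) where the argument would have to do real work that is not currently available in the literature. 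As it stands this is a reasonable research plan for attacking an open problem, not a proof.
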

 \begin{conjecture}\label{conj:asympt132}
 For every $m \geq 3$ and $r \geq 1$, there exists $n_0 \in \mathbb{N}$ such that for every natural $n > n_{0}$, we have
 \begin{equation}
     |\operatorname{Av}_{n}(\operatorname{dist}_{r}(12 \cdots (m-2)m(m-1)))| \leq |\operatorname{Av}_{n}(\operatorname{dist}_{r}(q))|,
 \end{equation} 
 for any given classical pattern $q$ of size $m$.
 \end{conjecture}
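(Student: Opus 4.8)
The plan is to imitate Elizalde's treatment of the extremal consecutive pattern~\cite{Elizalde2012}, transporting it from tight gaps to gaps of a fixed size $r\geq 1$; write $q^{\star}=12\cdots(m-2)m(m-1)$. The guiding heuristic is that a uniform distant pattern $\operatorname{dist}_{r}(q)$ with $r\geq 1$ should, as far as the \emph{ranking} of avoidance numbers is concerned, behave like the consecutive pattern $\underline{q}$ rather than like the classical pattern $q$: for consecutive patterns $q^{\star}$ is the asymptotically least avoided pattern of size $m$, whereas the analogue of Conjecture~\ref{conj:asympt132} fails for classical patterns ($r=0$) as soon as $m\geq 4$, so a proof has to use $r\geq 1$ in an essential way. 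The first technical step is to set up a cluster-type inclusion--exclusion for the exponential generating function $F_{q}(x)=\sum_{n\geq 0}|\operatorname{Av}_{n}(\operatorname{dist}_{r}(q))|\,x^{n}/n!$: partition a family of pairwise-overlapping occurrences of $\operatorname{dist}_{r}(q)$ (two occurrences overlapping when they share a position) into connected components, call an order-isomorphism class of such a component a \emph{distant cluster}, and let $c_{t}(q,r)$ be its signed count on $t$ points. Because the forced gaps leave ``holes'' of free letters inside a component, setting up a workable cluster expansion is already more delicate than in the consecutive case, but it should still express $F_{q}$ as an explicit functional of $\mathcal{C}_{q}(x)=\sum_{t\geq 1}c_{t}(q,r)\,x^{t}/t!$, so that comparing $|\operatorname{Av}_{n}(\operatorname{dist}_{r}(q^{\star}))|$ with $|\operatorname{Av}_{n}(\operatorname{dist}_{r}(q))|$ reduces to comparing the series $\mathcal{C}_{q}$ (only the tail matters, the relevant growth rates existing by the Stanley--Wilf type theorem above).

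The second step is the extremality of $q^{\star}$. The reverse and complement maps act on uniform distant patterns, so it suffices to treat one representative per Wilf class, and one then has to show that, for every fixed $r\geq 1$, the cluster series of $q^{\star}$ dominates --- with the signs oriented so as to \emph{minimise} the number of avoiders --- that of every competing $q\in S_{m}$. For $r=0$ (tight gaps) this is exactly the core of~\cite{Elizalde2012}: $q^{\star}$, having its only descent at the last step, carries the most favourable self-overlap structure. Here one additionally has to enumerate the overlaps that \emph{thread} one occurrence of $\operatorname{dist}_{r}(q)$ through the length-$r$ gaps of another and check that these extra clusters preserve the inequality. Granting it, an analytic comparison of dominant singularities gives $|\operatorname{Av}_{n}(\operatorname{dist}_{r}(q^{\star}))|\leq|\operatorname{Av}_{n}(\operatorname{dist}_{r}(q))|$ for all $n>n_{0}(m,r)$; the threshold is genuinely needed, since for $n\leq (m-1)(r+1)$ no size-$m$ pattern with gaps $r$ can occur at all and $\operatorname{Av}_{n}$ equals $S_{n}$ for every such pattern, and the inequality may even reverse for the first few larger values of $n$.

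I expect the cluster comparison to be the main obstacle: it is the technical heart of~\cite{Elizalde2012} already in the consecutive setting, and the gap parameter both enlarges the poset of possible overlaps and introduces a new uniformity-in-$r$ issue. Two easier sub-tasks worth doing first: (i) settle $m=3$, where Conjecture~\ref{conj:asympt132} already follows from Theorem~\ref{th:123IsLargest} together with the Wilf-equivalences among the $\operatorname{dist}_{r}(132)$-type patterns recorded after it, so the real content begins at $m=4$; and (ii) look for a direct injection in the spirit of Elizalde's original argument~\cite{Elizalde2003}, exploiting that an occurrence of $\operatorname{dist}_{r}(q^{\star})$ pins down a rigid increasing-run-with-terminal-descent skeleton, which might let one embed $\operatorname{Av}_{n}(\operatorname{dist}_{r}(q^{\star}))$ into $\operatorname{Av}_{n}(\operatorname{dist}_{r}(q))$ class by class and bypass the generating-function bookkeeping altogether. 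The companion Conjecture~\ref{conj:asympt123} should then yield to the mirror-image argument, with the monotone pattern $12\cdots m$ playing the role of the most avoided pattern of its size.
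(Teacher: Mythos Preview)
This statement is listed in the paper as an \emph{open conjecture} (Section~\ref{sec:open}); the paper gives no proof, only motivation by analogy with Elizalde's consecutive-pattern result~\cite{Elizalde2012}. So there is nothing on the paper's side to compare your argument against, and what you have written is explicitly a research programme rather than a proof.

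As a programme it is reasonable, but you should be clear that the central step is not merely ``technical'': the Goulden--Jackson cluster method works for consecutive patterns because an occurrence occupies a contiguous block, so overlapping occurrences form a linear chain with a simple overlap poset, and the cluster generating function factors cleanly through the overlap set of $q$. For $\operatorname{dist}_{r}(q)$ with $r\geq 1$ an occurrence sits on an arithmetic progression of positions with free letters in between; two occurrences can share positions, can thread through one another's gaps without sharing any position, or can do both, and the ``free'' letters inside one cluster may themselves host further occurrences. There is at present no known analogue of the cluster expansion that expresses $F_{q}$ as a functional of a cluster series in this setting, so the sentence ``it should still express $F_{q}$ as an explicit functional of $\mathcal{C}_{q}(x)$'' is the whole conjecture in disguise, not a preliminary. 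Your fallback suggestion~(ii), a direct injection in the style of~\cite{Elizalde2003}, is closer to what the paper itself flags as the desired route (``a suitable injection establishing the fact is desired''), and is probably the more promising line; note however that even the weaker Conjecture~\ref{conj:asympt123132} is left open, so an injection handling arbitrary $q$ would be a substantial advance.
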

 A weaker version of these two conjectures would be the one below and a suitable injection establishing the fact is desired.
 \begin{conjecture}\label{conj:asympt123132}
 For every $m \geq 3$ and $r \geq 1$, there exists $n_0 \in \mathbb{N}$ such that for every natural $n > n_{0}$:
 \begin{equation}
     |\operatorname{Av}_{n}(\operatorname{dist}_{r}(12 \cdots m))| \geq |\operatorname{Av}_{n}(\operatorname{dist}_{r}(12 \cdots (m-2)m(m-1)))|.
 \end{equation}
 \end{conjecture}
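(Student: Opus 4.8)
A natural line of attack follows the template of Lemma~\ref{lemma:almost}. Write $p=12\cdots m$ and $p'=12\cdots(m-2)m(m-1)$, and set $q=\operatorname{dist}_r(p)$ and $q'=\operatorname{dist}_r(p')$. For $m=3$ the statement is Theorem~\ref{th:123IsLargest} and its generalization to arbitrary $r$, proved by Hopkins and Weiler through pattern avoidance over posets; that poset machinery has no known analogue for $m\ge 4$, and naive generalizations already fail (e.g.\ $|\operatorname{Av}_7(1\square 234)|\ne|\operatorname{Av}_7(12\square 34)|$), so the plan is to imitate instead the inductive argument that settled the vincular pair $\underline{12}\square 3$ versus $\underline{13}\square 2$. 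The first step is the same reduction: since $\operatorname{Av}_n(q)$ and $\operatorname{Av}_n(q')$ share $\operatorname{Av}_n(q)\cap\operatorname{Av}_n(q')$, the asserted inequality is equivalent to $u_n\le v_n$ with $u_n\coloneqq|C_n(q)\cap\operatorname{Av}_n(q')|$ and $v_n\coloneqq|\operatorname{Av}_n(q)\cap C_n(q')|$; equivalently, one wants an injection, for all large $n$, from the permutations that contain the monotone distant pattern but avoid the almost-monotone one into the permutations that avoid the monotone one but contain the almost-monotone one.

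I would run this as an induction on $n$, refining the count by the initial segment of the permutation exactly as in Lemma~\ref{lemma:almost}. Two ingredients transfer verbatim for every $m$. First, putting the value $n$ in position $1$ is inert for both $q$ and $q'$, because the maximal entry of $p$ and of $p'$ sits at a position $\ge 2$; hence $u_{n;n}=u_{n-1}$ and $v_{n;n}=v_{n-1}$, which drives the recursion. Second, one needs the generalization of Lemma~\ref{lemma:key}: among $n$-permutations with a prescribed short prefix, the number that avoid $q'$ is largest when that prefix is as large as possible, because a prefix that is increasing and too spread out in value already forces an occurrence of $q$ or of $q'$ --- the analogue of the vanishing facts $u_{i,i+k;n}=0$ and $v_{i,i+k;n}=0$ that held for large $k$ in the $m=3$ proof. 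The inductive step then matches, block by block over prefixes of bounded (but $m$-dependent) length, the contributions to $u_n$ and to $v_n$, and reduces each comparison to an inequality between avoidance counts of shorter permutations, which the induction hypothesis supplies. Note that below the threshold $n=(m-1)(r+1)$ no occurrence of $q$ or $q'$ exists at all, so the inequality is trivially an equality there; all the content lies above that threshold.

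The main obstacle is this block-by-block case analysis. For $m=3$ it already required several subcases and two auxiliary identities, and for larger $m$ the number of relevant prefix-shapes grows; more seriously, once $r>0$ one must repeatedly verify that enough positions remain to the right of a partial occurrence to complete it to length $m$ --- the same bookkeeping that makes the analogous inequality an equality at $n=2r+3$ in the Hopkins--Weiler corollary and that should force $n_0\ge(m-1)(r+1)$. A cleaner route, and the one the conjecture explicitly asks for, would be a single explicit injection: given $\sigma$ avoiding $q'$ but containing $q$, choose a canonical occurrence $a_1<\cdots<a_m$ of the monotone distant pattern --- say the one obtained by taking $a_m$ as large as possible, then $a_{m-1}$, and so on --- and transpose the entries in positions $a_{m-1}$ and $a_m$. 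This leaves the index set untouched and converts that occurrence into an occurrence of $q'$, but it need not destroy the other occurrences of $q$, so one is forced to iterate; the difficulty I expect to dominate is then showing that the iteration terminates (plausibly because the leftmost offending window moves strictly to the right at each step) and that the resulting map can be inverted, rather than the single transposition itself.
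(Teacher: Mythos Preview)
This statement is listed in the paper as an open conjecture (Section~\ref{sec:open}), not a theorem; the paper offers no proof and explicitly says that ``a suitable injection establishing the fact is desired.'' So there is no paper proof to compare against, and your write-up is, as you yourself frame it, a plan of attack rather than a proof.

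That said, the gaps you flag are genuine and, as written, fatal. In the inductive route, the crucial step is the claimed analogue of Lemma~\ref{lemma:key}: that among $n$-permutations with a prescribed short prefix, the count of $q'$-avoiders is maximized when the prefix is ``as large as possible.'' You assert this but give no argument, and the $m=3$ proof of Lemma~\ref{lemma:key} relied on the fact that a single letter $i$ followed by $n$ already forces an occurrence of $\underline{13}\square 2$; for $m\ge 4$ and gaps of size $r$ one needs a prefix of length at least $m-1$ spread over $(m-2)(r+1)+1$ positions before anything is forced, so the monotonicity-in-prefix statement is both harder to formulate and not obviously true. Without it the block-matching in the inductive step has nothing to reduce to. In the injection route, the transposition of the entries at positions $a_{m-1}$ and $a_m$ can create new occurrences of $q$ (any index $b$ with $a_{m-1}<b$, $b-a_{m-2}>r$, and value between the old $\pi_{a_{m-1}}$ and $\pi_{a_m}$ yields one), and iterating need not push the leftmost offending window rightward, since the new occurrence can reuse $a_1,\dots,a_{m-2}$; nor is it clear how one would recover the original permutation from the output, so invertibility is unaddressed. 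Both approaches are reasonable starting points, and your reduction to $u_n\le v_n$ and the observation that $\pi_1=n$ is inert are correct, but neither line currently closes; the conjecture remains open.
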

    
    \item Section \ref{sec:conj} gives interpretations in terms of distant patterns to six out of the ten conjectures of Kuszmaul, as well as solutions to two of these six conjectures. One may try to give a solution for the remaining four in a similar fashion, using the listed interpretations. 
    \item The PermPAL database \cite{PermPAL} currently contains about 17000 permutation pattern avoidance results and one may try to find equivalent interpretations in terms of distant patterns, as well as new combinatorial proofs using those interpretations (similar to the two proofs in section \ref{sec:conj}). One may also use the database \cite{PPdatabase}.
    \item Section \ref{sec:SW} mentions a few  unresolved conjectures, namely Conjecture \ref{conj:SWfracLimit}, \ref{conj:SWlowerBound} and \ref{conj:SWexp}, all related to Stanley--Wilf type results.
    \item One may investigate the case of \emph{bivincular distant patterns}, i.e., when we have constraints for the values of the letters in an occurrence of a distant pattern in addition to  gap size constraints.
    \item Several statistics over permutations avoiding certain distant patterns might be considered and perhaps some facts related to their distribution could be established. We were not able to find much previous work related to the topic.
\end{itemize}
\section*{Acknowledgement} I am grateful to my advisor Bridget Tenner for the helpful comments and the ideas related to this work, as well as to Toufik Mansour for providing me the thesis \cite{thesis}.

\end{document}